\newtheorem{theorem}[equation]{Theorem}
\newtheorem{lemma}[equation]{Lemma}
\newtheorem{proposition}[equation]{Proposition}
\newtheorem{corollary}[equation]{Corollary}
\newtheorem{theoremA}{Theorem}
\newtheorem{corollaryA}{Corollary}
\theoremstyle{definition}
\newtheorem{notation}[equation]{Notation}
\newtheorem{warning}[equation]{Warning}
\newtheorem{construction}[equation]{Construction}
\newtheorem{remark}[equation]{Remark}
\newtheorem{definition}[equation]{Definition}
\title[Inertial and Hodge--Tate weights]{Inertial and Hodge--Tate weights of crystalline representations}
\author{Robin Bartlett}
\email{robinbartlett18@mpim-bonn.mpg.de}
\date{\today}
\begin{document}
\maketitle

\begin{abstract}
	Let $K$ be an unramified extension of $\mathbb{Q}_p$ and $\rho\colon G_K \rightarrow \operatorname{GL}_n(\overline{\mathbb{Z}}_p)$ a crystalline representation. If the Hodge--Tate weights of $\rho$ differ by at most $p$ then we show that these weights are contained in a natural collection of weights depending only on the restriction to inertia of $\overline{\rho} = \rho \otimes_{\overline{\mathbb{Z}}_p} \overline{\mathbb{F}}_p$. Our methods involve the study of a full subcategory of $p$-torsion Breuil--Kisin modules which we view as extending Fontaine--Laffaille theory to filtrations of length $p$.
\end{abstract}

\tableofcontents

\section{Introduction}

Let $K/\mathbb{Q}_p$ be a finite unramified extension with residue field $k$. In this paper we show that if the Hodge--Tate weights of a crystalline representation $\rho$ of $G_K$ are sufficiently small then these weights are encoded in an explicit way by the reduction of $\rho$ modulo~$p$. Using Fontaine--Laffaille theory this is known for Hodge--Tate weights differing by at most $p-1$; we will treat weights differing by at most $p$. Our techniques are local and involve the study of a full subcategory of $p$-torsion Breuil--Kisin modules, which we view as extending ($p$-torsion) Fontaine--Laffaille theory to filtrations of length $p$.

To state our result let $\mathbb{Z}_+^n$ denote the set of $(\lambda_1,\ldots, \lambda_n) \in \mathbb{Z}^n$ with $\lambda_1 \leq \ldots \leq \lambda_n$. In Section~\ref{inert} we show how to attach to any continuous $\overline{\rho}\colon G_K \rightarrow \operatorname{GL}_n(\overline{\mathbb{F}}_p)$ a subset
$$
\operatorname{Inert}(\overline{\rho}) \subset (\mathbb{Z}_+^n)^{\operatorname{Hom}_{\mathbb{F}_p}(k,\overline{\mathbb{F}}_p)}
$$
This subset depends only on the restriction to inertia of the semi-simplification of $\overline{\rho}$, and does so in an explicit fashion. We typically write an element of $\operatorname{Inert}(\overline{\rho})$ as $(\lambda_\tau)_{\tau \in \operatorname{Hom}_{\mathbb{F}_p}(k,\overline{\mathbb{F}}_p)}$ with $\lambda_\tau = (\lambda_{1,\tau} \leq \ldots \leq \lambda_{n,\tau})$.

Throughout Hodge--Tate weights are normalised so that the cyclotomic character has weight $-1$.

\begin{theoremA}\label{theorem}
Let $\rho\colon G_K \rightarrow \operatorname{GL}_n(\overline{\mathbb{Z}}_p)$ be a crystalline representation. For each $\tau \in \operatorname{Hom}_{\mathbb{F}_p}(k,\overline{\mathbb{F}}_p)$ let $\lambda_\tau \in \mathbb{Z}_+^n$ denote the $\tau$-Hodge--Tate weights of $\rho$. If $\lambda_{n,\tau} - \lambda_{1,\tau} \leq p$ for all $\tau$ then
$$
(\lambda_\tau)_\tau \in \operatorname{Inert}(\overline{\rho})
$$
\end{theoremA}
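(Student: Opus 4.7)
The plan is to convert the problem into one about $p$-torsion Breuil--Kisin modules. To $\rho$ one attaches a Breuil--Kisin module $\mathfrak{M}$ over $\mathfrak{S}=W(k)\llbracket u\rrbracket$ in the usual way; after twisting by a power of the cyclotomic character (which shifts every $\lambda_{i,\tau}$ by the same constant and twists $\operatorname{Inert}$ correspondingly) we may assume every $\lambda_{i,\tau}\in[0,p]$, so that $\mathfrak{M}$ has $E(u)$-height $\leq p$, where $E(u)=u-p$ since $K$ is unramified. The Hodge--Tate weights of $\rho$ are then encoded as the $\tau$-components of the elementary divisors of the Frobenius $\varphi_{\mathfrak{M}}\colon \varphi^*\mathfrak{M}\to\mathfrak{M}$ with respect to $E(u)$.

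I would then pass to the $p$-torsion reduction $\overline{\mathfrak{M}}:=\mathfrak{M}/p$. The hypothesis $\lambda_{n,\tau}-\lambda_{1,\tau}\leq p$ is exactly what is needed to place $\overline{\mathfrak{M}}$ inside the full subcategory of $p$-torsion Breuil--Kisin modules highlighted in the abstract, which extends Fontaine--Laffaille theory to filtration length $p$. Within this subcategory I would establish that there is a well-defined ``shape'' in $(\mathbb{Z}_+^n)^{\operatorname{Hom}_{\mathbb{F}_p}(k,\overline{\mathbb{F}}_p)}$ attached to each object, that this shape agrees with $(\lambda_\tau)_\tau$ for any crystalline lift, and that it depends on $\overline{\mathfrak{M}}$ only through the associated étale $\varphi$-module, hence through $\overline{\rho}\lvert_{G_{K_\infty}}$.

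With these tools in hand, the theorem reduces to checking that the set of all shapes attainable by an object of the subcategory whose associated Galois representation has a given semisimple inertial type coincides with the set $\operatorname{Inert}(\overline{\rho})$ constructed in Section~\ref{inert}. Since $\operatorname{Inert}(\overline{\rho})$ is built directly from $\overline{\rho}^{\mathrm{ss}}\lvert_{I_K}$, this last step should amount to comparing two parallel classifications, one from Breuil--Kisin data and one from the explicit inertial recipe of Section~\ref{inert}.

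I expect the main obstacle to be the analysis inside the extended subcategory. In the classical Fontaine--Laffaille range $[0,p-1]$ the Frobenius interacts rigidly with the $E(u)$-adic filtration, giving immediate control on the shape; at the boundary weight $p$ this rigidity degenerates, and one must instead extract shape invariants directly from the Breuil--Kisin presentation. Producing such an invariant, proving it is well-defined on the whole subcategory, verifying its compatibility with reduction from characteristic zero, and showing that it is genuinely inertial in nature is where the bulk of the technical work should lie; this is precisely the ``length $p$'' extension of Fontaine--Laffaille theory advertised in the abstract.
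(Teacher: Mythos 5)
Your outline follows the paper's broad strategy (twist into $[0,p]$, reduce mod $p$, work in the strongly divisible subcategory, read off weights), but there is a genuine conceptual gap in the second paragraph that the paper takes pains to highlight and that your argument cannot survive.

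You assert that the ``shape'' of a strongly divisible $p$-torsion Breuil--Kisin module ``depends on $\overline{\mathfrak{M}}$ only through the associated \'etale $\varphi$-module, hence through $\overline{\rho}\lvert_{G_{K_\infty}}$.'' This is false, and it is precisely the point where the length-$p$ theory diverges from Fontaine--Laffaille. A $p$-torsion Breuil--Kisin module is just a $k\llbracket u\rrbracket$-lattice inside an \'etale $\varphi$-module over $k((u))$, and there are many strongly divisible lattices in the same $\varphi$-module with \emph{different} weight multisets (the paper's Subsection~\ref{example} constructs a strongly divisible $M\subset f_*N$ with $M[\tfrac{1}{u}]=(f_*N)[\tfrac{1}{u}]$ whose weights differ from those of $f_*N$, and the introduction emphasizes that simple objects of $\operatorname{Mod}^{\operatorname{SD}}_k$ are not determined by their weights together with their Galois representation). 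What is true---and what is the technical heart of the paper---is that for irreducible $T(M)$ the weight multiset always lands in the finite set $\operatorname{Inert}(T(M))$; proving this is the content of Propositions~\ref{explicitSD}, \ref{chooselambda} and \ref{main}, which require a delicate analysis of which sublattices of an induced rank-one module are strongly divisible (the period-$p$ shifts built into Definition~\ref{defofWinert} appear exactly because the lattice choice can shift a weight $0\leftrightarrow p$). You cannot bypass this by appealing to a unique shape invariant; you must prove a containment into a set.

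Two further ingredients that your sketch treats as automatic are in fact substantive. First, the statement that ``the shape agrees with $(\lambda_\tau)_\tau$ for any crystalline lift'' is exactly Theorem~\ref{GLSthm2} (Gee--Liu--Savitt, and Wang for $p=2$): the reduction mod $p$ of the Breuil--Kisin module of a crystalline lattice with Hodge--Tate weights in $[0,p]$ is strongly divisible with weights equal to the Hodge--Tate weights. This is a deep external input, not a bookkeeping step. Second, to pass from the irreducible case to general $\overline{\rho}$ you need the stability of $\operatorname{Mod}^{\operatorname{SD}}_k(\mathcal{O})$ under subquotients together with additivity of weights along short exact sequences (Proposition~\ref{SDsubquotO}); one then runs the argument along a composition series of $\overline{\rho}$. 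Your proposal should make both of these explicit rather than folding them into ``establish a well-defined shape.''
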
 

When $n=2$ and $p>2$ the result is a theorem of Gee--Liu--Savitt \cite{GLS}. When $n=2$ and $p=2$ the result is due to Wang \cite{Wang17}. In this paper we extend their methods to higher dimensions.

As already mentioned, when $\lambda_{n,\tau} - \lambda_{1,\tau} \leq p-1$ the Theorem~\ref{theorem} is a straightforward consequence of Fontaine--Laffaille theory, so the main content of our result is that it applies to Hodge--Tate weights differing by $p$. On the other hand the Theorem~\ref{theorem} does not hold if the condition $\lambda_{n,\tau} - \lambda_{1,\tau} \leq p$ is relaxed. For example, there exist irreducible two dimensional crystalline representations $\rho$ of $G_{\mathbb{Q}_p}$ with Hodge--Tate weights $(-p-1,0)$, whose reduction modulo~$p$ have the form $\overline{\rho} = (\begin{smallmatrix}
\chi_{\operatorname{cyc}} & * \\ 0 & \chi_{\operatorname{cyc}}
\end{smallmatrix})$, see \cite[Th\'eor\`eme 3.2.1]{Berger11}. Here $\chi_{\operatorname{cyc}}$ denotes the cyclotomic character. It is easy to check that $(-p-1,0)$ is not an element of $\operatorname{Inert}(\overline{\rho})$.

Our motivation comes from the weight part of (generalisations of) Serre's modularity conjecture. As a corollary of our result we can prove some new cases of weight elimination for mod~$p$ representations associated to automorphic representations on unitary groups of rank $n$. To be more precise let $F$ be an imaginary CM field in which $p$ is unramified and fix an isomorphism $\iota\colon \overline{\mathbb{Q}}_p \cong \mathbb{C}$. Attached to any RACSDC (regular, algebraic, conjugate self dual, and cuspidal) automorphic representation $\Pi$ of $\operatorname{GL}_n(\mathbb{A}_F)$ there is a continuous irreducible $r_{\iota,p}(\Pi)\colon G_F \rightarrow \operatorname{GL}_n(\overline{\mathbb{Q}}_p)$, cf. the main result of \cite{CH13}. If $\Pi$ is unramified above $p$ then $r_{\iota,p}(\Pi)$ is crystalline above $p$, and if $\lambda = (\lambda_\kappa)_{\kappa} \in (\mathbb{Z}_+^n)^{\operatorname{Hom}(F,\mathbb{C})}$ is the weight of $\Pi$ then the $\kappa$-Hodge--Tate weights\footnote{Using $\iota$ we can identify $\kappa \in \operatorname{Hom}(F,\mathbb{C})$ with pairs $(v,\widetilde{\tau})$ where $v$ is a place of $F$ above $p$ and $\widetilde{\tau} \in \operatorname{Hom}(F_v,\overline{\mathbb{Q}}_p)$. Since $p$ is unramified in $F$, $\widetilde{\tau}$ can be identified with $\tau \in \operatorname{Hom}_{\mathbb{F}_p}(k_v,\overline{\mathbb{F}}_p)$ where $k_v$ denotes the residue field of $F_v$. The $\kappa$-th Hodge--Tate weights of $r_{\iota,\Pi}$ are then the $\tau$-th Hodge--Tate weights of $r_{\iota,p}(\Pi)$ at $v$.} of $r_{\iota,p}(\Pi)$ equal
$$
\lambda_{\kappa} + (0,1,\ldots, n-1)
$$
Therefore, if $\operatorname{W}(\overline{r})^{\operatorname{inert}} \subset (\mathbb{Z}_{+}^n)^{\operatorname{Hom}(F,\mathbb{C})}$ consists of $(\lambda_{\kappa})$ such that $\lambda_{\kappa} + (0,1,\ldots, n-1) \in \operatorname{Inert}(\overline{r}_v)$, Theorem~\ref{theorem} implies
\begin{corollaryA}\label{corollary}
Let $\overline{r}\colon G_F \rightarrow \operatorname{GL}_n(\overline{\mathbb{F}}_p)$ be irreducible and continuous. Let $\operatorname{W}(\overline{r})^{\operatorname{aut}}$ denote the set of weights $\lambda \in (\mathbb{Z}_+^n)^{\operatorname{Hom}(F,\mathbb{C})}$ such that there exists an RACSDC automorphic representation $\Pi$ of $\operatorname{GL}_n(\mathbb{A}_{F})$ which is unramified at $p$, has weight $\lambda$, and is such that $\overline{r}_{\iota,p}(\Pi) \cong \overline{r}$. Then
$$
\operatorname{W}(\overline{r})^{\operatorname{aut}}_{\leq p-n +1} \subset W(\overline{r})^{\operatorname{inert}}_{\leq p-n +1}
$$
where for $* \in \lbrace \operatorname{aut},\operatorname{inert} \rbrace$, $\operatorname{W}(\overline{r})_{\leq p -n + 1}^*$ is the subset containing $(\lambda_{\kappa}) \in \operatorname{W}(\overline{r})^*$ with $\lambda_{n,\kappa} - \lambda_{1,\kappa} \leq p - n + 1$.
\end{corollaryA}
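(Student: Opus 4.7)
The plan is to deduce the Corollary directly from Theorem~\ref{theorem} by tracking how the Hodge--Tate weights of $r_{\iota,p}(\Pi)$ relate to the weight of $\Pi$. First I would fix $\lambda \in \operatorname{W}(\overline{r})^{\operatorname{aut}}_{\leq p-n+1}$ witnessed by an RACSDC automorphic representation $\Pi$ with $\overline{r}_{\iota,p}(\Pi) \cong \overline{r}$, and use the discussion preceding the statement: for every place $v$ of $F$ above $p$ the restriction $\rho := r_{\iota,p}(\Pi)|_{G_{F_v}}$ is crystalline with $\tau$-Hodge--Tate weights
$$
\lambda_\kappa + (0,1,\ldots,n-1),
$$
where $\kappa \in \operatorname{Hom}(F,\mathbb{C})$ corresponds to $(v,\tau)$ under $\iota$ via the identification recalled in the footnote before the Corollary.

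Next I would translate the weight bound into the hypothesis of Theorem~\ref{theorem}. By assumption $\lambda_{n,\kappa} - \lambda_{1,\kappa} \leq p - n + 1$; after adding $(0,1,\ldots,n-1)$ the largest $\tau$-Hodge--Tate weight of $\rho$ becomes $\lambda_{n,\kappa} + n - 1$ while the smallest remains $\lambda_{1,\kappa}$, so these weights differ by at most $p$. Hence Theorem~\ref{theorem} applies to $\rho$ at $v$ and yields $\lambda_\kappa + (0,1,\ldots,n-1) \in \operatorname{Inert}(\overline{r}_v)$. Running this over all $v \mid p$ and all $\tau$, and unravelling the definition of $\operatorname{W}(\overline{r})^{\operatorname{inert}}$ recalled immediately before the Corollary, this is exactly the statement $\lambda \in \operatorname{W}(\overline{r})^{\operatorname{inert}}$; the weight bound on $\lambda$ is unaffected, so $\lambda$ lies in $\operatorname{W}(\overline{r})^{\operatorname{inert}}_{\leq p - n + 1}$.

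There is no serious obstacle in the argument: the only non-formal input is Theorem~\ref{theorem} itself, and everything else amounts to carefully matching the two normalisations of weights (the automorphic weight $\lambda_\kappa$ versus the shifted Hodge--Tate weights $\lambda_\kappa + (0,1,\ldots,n-1)$) together with the local--global dictionary coming from $\iota$. The irreducibility hypothesis on $\overline{r}$ plays no direct role in the deduction and is presumably imposed only to make $r_{\iota,p}(\Pi)$ well-defined up to isomorphism, and hence to make the notion of $\overline{r}_v$ entering $\operatorname{Inert}(\overline{r}_v)$ unambiguous.
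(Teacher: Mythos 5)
Your proof is correct and is essentially the argument the paper gives (in compressed form, in the paragraph between Theorem~A and the Corollary): identify the $\tau$-Hodge--Tate weights of $r_{\iota,p}(\Pi)|_{G_{F_v}}$ with $\lambda_\kappa + (0,1,\ldots,n-1)$, check that the bound $\lambda_{n,\kappa} - \lambda_{1,\kappa} \leq p-n+1$ translates to a spread of at most $p$ after the shift, and apply Theorem~A place by place. The arithmetic and the bookkeeping are exactly right, so there is nothing to add.
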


We point out that while the Corollary~\ref{corollary} involves only distinct Hodge--Tate weights, due to the regularity assumptions on our automorphic representations, Theorem~\ref{theorem} does not require such distinctness.

If $\overline{r}$ is assumed to arise from some potentially diagonalisable RACSDC automorphic representation (a notion introduced in \cite{BLGGT}) and if we assume $\overline{r}_v$ is semi-simple for each $v \mid p$ then, under a Taylor--Wiles hypothesis, the inclusion in the Corollary~\ref{corollary} is an equality. This follows from e.g. \cite[Theorem 3.1.3]{BLGG}.

To conclude this introduction we briefly explain our proof of the theorem; let us do this by sketching the content of the various sections in this paper. In the first two sections we recall some basic notions; in Section~\ref{inert} we define the set $\operatorname{Inert}(\overline{\rho})$ and in Section~\ref{filtration} we give some elementary results on filtered modules. In Section~\ref{breuil--kisin} we recall the notion of a Breuil--Kisin module, and recall how to associate to them Galois representations. Breuil--Kisin modules killed by $p$ admit a natural set of weights and in Section~\ref{strong-divisible} we define what it means for a $p$-torsion Breuil--Kisin module to be strongly divisible; it's weights must be contained in $[0,p]$ and a certain explicit condition on its $\varphi$ must be satisfied. We view the category of strongly divisible Breuil--Kisin modules $\operatorname{Mod}^{\operatorname{SD}}_k$ as an extension of $p$-torsion Fontaine--Laffaille theory to filtrations of length $p$. We establish two important properties of $\operatorname{Mod}^{\operatorname{SD}}_k$. The first main property (Proposition~\ref{SDsubquotO}) is shown in Section~\ref{strong-divisible} and states that $\operatorname{Mod}^{\operatorname{SD}}_k$ is stable under subquotients, and that weights behave well along short exact sequences. The second main property (Proposition~\ref{inertweights}) is proved in Section~\ref{irreducible} and concerns the structure of simple objects in $M \in \operatorname{Mod}^{\operatorname{SD}}_k$. We show that for such $M$ the weights of $M$ coincide with the inertial weights of the associated Galois representation. These two properties mirror the situation for Fontaine--Laffaille theory. However, unlike in Fontaine--Laffaille theory, it is not the case that simple $M \in \operatorname{Mod}^{\operatorname{SD}}_k$ are determined by their weights together with their associated Galois representation. This complicates the proofs considerably. Thus, while there are similarities between $\operatorname{Mod}^{\operatorname{SD}}_k$ and Fontaine--Laffaille theory in some respects, the former category is more complicated, reflecting the fact that the reduction of crystalline representations with Hodge--Tate weights in $[0,p]$ is genuinely more subtle than for weights in the Fontaine--Laffaille range. In the final section we recall a theorem of Gee--Liu--Savitt \cite{GLS} which relates $\operatorname{Mod}^{\operatorname{SD}}_k$ with the reduction modulo~$p$ of those crystalline representations with Hodge--Tate weights contained in $[0,p]$. Using this, and the two properties of $\operatorname{Mod}^{\operatorname{SD}}_k$ described above, it is straightforward to deduce Theorem~\ref{theorem}.

\subsection*{Acknowledgements} This paper contains part of the my PhD thesis, and I would like to thank my advisor Fred Diamond for his guidance and support. I would also like to thank Dougal Davis and Misja Steinmetz for helpful conversations, David Savitt for bringing our attention to the work of Wang \cite{Wang17}, and Xavier Caruso for encouraging me to extend the main result (which was originally proved for distinct Hodge--Tate weights). Finally it is a pleasure to acknowledge the debt this paper owes to the work of Toby Gee, Tong Liu, and David Savitt.

\subsection{Notation}\label{notationpart}
Throughout we let $k$ denote a finite field of characteristic $p> 0$ and write $K_0 = W(k)[\frac{1}{p}]$. In the introduction we took $K = K_0$; however some of our constructions are valid for arbitrary finite extensions so now allow $K$ to denote a totally ramified extension of $K_0$ of degree $e$, with ring of integers $\mathcal{O}_K$. At certain points it will be necessary to assume $K = K_0$. 

Let $C$ denote the completion of an algebraic closure $\overline{K}$ of $K$ and let $\mathcal{O}_C$ be its ring of integers, with residue field $\overline{k}$. We write $G_K = \operatorname{Gal}(\overline{K}/K)$ and $v_p$ for the valuation on $C$ normalised so that $v_p(p) =1$. 

We fix a uniformiser $\pi \in K$ and a compatible system $\pi^{1/p^n} \in \overline{K}$ of $p^n$-th roots of $\pi$. Many constructions in this paper depend upon these choices. Set $K_\infty = K(\pi^{1/p^\infty})$ and $G_{K_\infty} = \operatorname{Gal}(\overline{K}/K_\infty)$.

Let $\mu_{p^n}(\overline{K})$ denote the group of $p^n$-th roots of unity in $\overline{K}$ and write $\mathbb{Z}_p(1)$ for the free rank one $\mathbb{Z}_p$-module
$$
\varprojlim \mu_{p^n}(\overline{K})
$$
Let $\chi_{\operatorname{cyc}}\colon G_K \rightarrow \mathbb{Z}_p^\times$ denote the character though which $G_K$ acts on $\mathbb{Z}_p(1)$.

Let $E/\mathbb{Q}_p$ denote a finite extension with ring of integers $\mathcal{O}$ and residue field $\mathbb{F}$. We assume throughout that $K_0 \subset E$. This will be our coefficient field in which the representations we consider will be valued.

If $A$ is any ring of characteristic $p$ we let $\varphi\colon A \rightarrow A$ denote the homomorphism $x \mapsto x^p$. If $A$ is perfect (i.e. $\varphi$ is an automorphism) we let $W(A)$ denote the ring of Witt vectors of $A$ and write $\varphi: W(A) \rightarrow W(A)$ for the automorphism lifting $\varphi$ on $A$.

\section{Inertial weights}\label{inert}

In this section we recall the structure of irreducible torsion representations of $G_K$ and $G_{K_\infty}$. We then define the set $\operatorname{Inert}(\overline{\rho})$ from the introduction.

\subsection{Tame ramification}
Let $K^{\operatorname{ur}}$ and $K^{\operatorname{t}}$ be the maximal unramified and maximal tamely ramified extension of $K$ respectively. Set $I^{\operatorname{t}} = \operatorname{Gal}(K^{\operatorname{t}}/K^{\operatorname{ur}})$. As in \cite[Proposition 2]{Serre72} there is an isomorphism
$$
s\colon I^{\operatorname{t}} \rightarrow \varprojlim l^\times
$$
where in the limit $l$ runs over finite extensions of $k$ with transition maps given by norm maps. This isomorphism sends $\sigma \mapsto (s(\sigma)_{l})_{l}$ where $s(\sigma)_{l}$ is the image in the residue field of $K^{\operatorname{t}}$ of the $\operatorname{Card}(l^\times)$-th root of unity 
$$
\sigma(\pi^{1/\operatorname{Card}(l^\times)})/\pi^{1/\operatorname{Card}(l^\times)}\in K^{\operatorname{t}}
$$
Here $\pi^{1/\operatorname{Card}(l^\times)}$ is any $\operatorname{Card}(l^\times)$-th root of $\pi$; $s(\sigma)_l$ does not depend upon any of these choices. Via $s$ we define the fundamental character
$$
\omega_{l}\colon I^{\operatorname{t}} \rightarrow l^\times
$$
For $\theta \in \operatorname{Hom}_{\mathbb{F}_p}(l,\overline{\mathbb{F}}_p)$ define $\omega_\theta = \theta \circ \omega_l$. Note this is a power of $\omega_l$ and $\omega_{\theta \circ \varphi} = \omega_\theta^p$.
\begin{lemma}\label{uniquertheta}
Any continuous $\chi\colon I^{\operatorname{t}} \rightarrow \overline{\mathbb{F}}_p^\times$ extends to a continuous character of $\operatorname{Gal}(K^{\operatorname{t}}/K)$ if and only if there exist integers $(r_\tau)_{\tau \in \operatorname{Hom}_{\mathbb{F}_p}(k,\overline{\mathbb{F}}_p)}$ such that $\chi = \prod_{\tau} \omega_\tau^{r_\tau}$.
\end{lemma}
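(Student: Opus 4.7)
My plan is to reduce the extension question to a Frobenius-invariance condition on $\chi$, and then identify the invariant characters using the norm map between finite fields.

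For the ``only if'' direction, suppose $\chi$ extends to a character $\widetilde{\chi}$ of $\operatorname{Gal}(K^{\operatorname{t}}/K)$. Since $\overline{\mathbb{F}}_p^\times$ is abelian, $\chi$ must be invariant under the conjugation action of $\operatorname{Gal}(K^{\operatorname{ur}}/K)$ on $I^{\operatorname{t}}$. Via $s$ this action identifies with the natural one on $\varprojlim l^\times$ induced from $\operatorname{Gal}(K^{\operatorname{ur}}/K) \twoheadrightarrow \operatorname{Gal}(l/k)$, so a Frobenius lift acts on $l^\times$ by $x \mapsto x^q$ with $q = |k|$. Continuity forces $\chi$ to factor through $l^\times$ for some finite $l/k$, and Frobenius-invariance becomes $\chi^{q-1} = 1$, i.e. $(l^\times)^{q-1} \subset \ker \chi$. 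I then identify $l^\times/(l^\times)^{q-1}$ with $k^\times$ via the norm: since $l^\times$ is cyclic and $N_{l/k}$ coincides with the $(|l^\times|/(q-1))$-th power map, its kernel equals $(l^\times)^{q-1}$, so $N_{l/k}$ induces an isomorphism $l^\times/(l^\times)^{q-1} \cong k^\times$. Hence $\chi = \chi' \circ N_{l/k}$ for a character $\chi'\colon k^\times \to \overline{\mathbb{F}}_p^\times$. Writing $\chi' = \prod_\tau \tau^{r_\tau}$ (the character group of $k^\times$ into $\overline{\mathbb{F}}_p^\times$ being generated by the embeddings $\tau$) and using the compatibility $N_{l/k} \circ \omega_l = \omega_k$ (immediate from the norm transitions defining $s$) yields $\chi = \prod_\tau \omega_\tau^{r_\tau}$.

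For the ``if'' direction it suffices to extend each $\omega_\tau$ explicitly. Set $L = K^{\operatorname{ur}}(\pi^{1/(q-1)})$. Since $K \supset \mu_{q-1}$ by Hensel's lemma, $K(\pi^{1/(q-1)})/K$ is Galois; since $X^{q-1} - \pi$ is Eisenstein it is totally ramified, hence linearly disjoint from $K^{\operatorname{ur}}/K$. Consequently $\operatorname{Gal}(L/K) \cong \operatorname{Gal}(K^{\operatorname{ur}}/K) \times k^\times$, and $\omega_k$ factors as $I^{\operatorname{t}} \twoheadrightarrow \operatorname{Gal}(L/K^{\operatorname{ur}}) = k^\times$. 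The character of $\operatorname{Gal}(L/K)$ sending $(g, x) \mapsto \tau(x)$ then restricts to $\omega_\tau$ on $I^{\operatorname{t}}$; pulling back along $\operatorname{Gal}(K^{\operatorname{t}}/K) \twoheadrightarrow \operatorname{Gal}(L/K)$ produces the desired extension.

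The main obstacle is the identification $l^\times/(l^\times)^{q-1} \cong k^\times$ via the norm map, together with verifying that the conjugation action of $\operatorname{Gal}(K^{\operatorname{ur}}/K)$ on $I^{\operatorname{t}}$ really matches the natural Frobenius action on $\varprojlim l^\times$ under $s$. Once these are in place, both implications follow from the explicit description of $s$ and a short piece of Galois theory.
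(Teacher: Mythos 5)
Your argument is correct, and it takes a more self-contained route than the paper. The paper invokes the splitting of $1 \to I^{\operatorname{t}} \to \operatorname{Gal}(K^{\operatorname{t}}/K) \to G_k \to 1$ to reduce extendability to Frobenius-invariance $\chi^q = \chi$ (with $q = |k|$), and then cites \cite[Proposition 5]{Serre72} for the equivalence between $\chi^q = \chi$ and $\chi$ being a power of $\omega_k$. You replace both steps: for the ``only if'' direction, you unpack the Serre citation into a direct computation with the norm map, using that $\chi$ factors through some $l^\times$ and that $N_{l/k}$, being the $(|l^\times|/(q-1))$-th power map on a cyclic group, has kernel exactly $(l^\times)^{q-1}$, so that $\chi^{q-1}=1$ is equivalent to $\chi$ descending along $N_{l/k}$ to a character of $k^\times$; for the ``if'' direction, rather than appealing to the splitting, you build the extension by hand via the intermediate field $L = K^{\operatorname{ur}}(\pi^{1/(q-1)})$ and the product decomposition $\operatorname{Gal}(L/K)\cong \operatorname{Gal}(K^{\operatorname{ur}}/K)\times k^\times$. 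This is slightly longer but has the virtue of being explicit; in fact your ``if'' construction is essentially the same recipe the paper uses in the paragraph following the lemma to define the canonical extension $\omega_l\colon G_L \to \overline{\mathbb{F}}_p^\times$, so your route also makes that later construction self-evident. One small bookkeeping point: you silently identify $\chi$ with the induced character $\bar\chi$ of $l^\times$ when writing $\chi^{q-1}=1$ and $\chi=\chi'\circ N_{l/k}$; that abuse of notation is harmless here since $\omega_l$ is surjective, but worth flagging.
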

\begin{proof}
Since $1 \rightarrow I^{\operatorname{t}} \rightarrow \operatorname{Gal}(K^{\operatorname{t}}/K) \rightarrow G_k \rightarrow 1$ is split, $\chi$ extends to $\operatorname{Gal}(K^{\operatorname{t}}/K)$ if and only if $\chi$ is stable under the conjugation action of $G_k$ on $I^{\operatorname{t}}$. Via $s$ this action is given by the natural action of $G_k$ on $\varprojlim l^\times$, and so $\chi$ extends if and only if $\chi^{p^{[k:\mathbb{F}_p]}} = \chi$. After \cite[Proposition 5]{Serre72} this is equivalent to asking that $\chi$ be a power of $\omega_k$, thus a product as in the lemma.
\end{proof}

In particular we see each $\omega_{l}$ extends to a character of $G_{L}$ where $L/K$ is the unramified extension with residue field $l$. Such an extension is well defined only up to twisting by an unramified character. Our fixed choice of uniformiser $\pi \in K$ allows us to define a canonical choice of extension by sending $\sigma \in G_L$ onto the image in the residue field of the element $\sigma(\pi^{1/\operatorname{Card}(l^\times)}) / \pi^{1/\operatorname{Card}(l^\times)} \in K^{\operatorname{t}}$ where $\pi^{1/\operatorname{Card}(l^\times)}$ is an $\operatorname{Card}(l^\times)$-th root of $\pi$. We shall denote this character again by $\omega_l\colon G_L \rightarrow \overline{\mathbb{F}}_p^\times$. Also, for $\theta \in \operatorname{Hom}_{\mathbb{F}_p}(l,\overline{\mathbb{F}}_p)$ we write $\omega_{\theta} = \theta \circ \omega_{l}$, as characters of $G_L$.

For an extension $L/K$ write $\operatorname{Ind}_L^{K} V$ in place of $\operatorname{Ind}_{\operatorname{Gal}(\overline{K}/L)}^{\operatorname{Gal}(\overline{K}/K)} V$.
\begin{lemma}\label{irred}
If $V$ is a continuous irreducible representation of $G_K$ on a finite dimensional $\overline{\mathbb{F}}_p$-vector space then $V \cong \operatorname{Ind}_L^K \chi$, where $L/K$ is an unramified extension of degree $\operatorname{dim}_{\mathbb{F}} V$ and $\chi\colon G_L \rightarrow \overline{\mathbb{F}}_p^\times$ is a continuous character.
\end{lemma}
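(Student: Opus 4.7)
The plan is to use the standard Clifford-theoretic classification of irreducible mod $p$ Galois representations. First I would observe that since $\overline{\mathbb{F}}_p$ carries the discrete topology and $V$ is finite dimensional, continuity forces the representation to factor through a finite quotient of $G_K$. Let $P \subset G_K$ denote the wild inertia; its image is then a finite $p$-group, so the standard fact that nonzero $\overline{\mathbb{F}}_p$-representations of finite $p$-groups have nonzero fixed vectors gives $V^P \neq 0$. By irreducibility $V^P = V$, and hence the action factors through $\operatorname{Gal}(K^{\operatorname{t}}/K)$.

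Next I would restrict to $I^{\operatorname{t}}$ and apply Clifford theory for the split short exact sequence $1 \to I^{\operatorname{t}} \to \operatorname{Gal}(K^{\operatorname{t}}/K) \to G_k \to 1$ recalled in the proof of Lemma~\ref{uniquertheta}. Since $I^{\operatorname{t}} \cong \varprojlim l^\times$ is abelian of pro-order prime to $p$, the restriction $V|_{I^{\operatorname{t}}}$ is semisimple and decomposes as a direct sum of characters, and irreducibility of $V$ ensures that the characters appearing form a single orbit under the conjugation action of $G_k$. Fixing one such character $\chi\colon I^{\operatorname{t}} \to \overline{\mathbb{F}}_p^\times$, letting $H \subset G_k$ denote its stabilizer, and letting $L/K$ be the unramified extension with $\operatorname{Gal}(\overline{k}/k_L) = H$, standard Clifford theory yields $V \cong \operatorname{Ind}_L^K W$, where $W \subset V$ is the $\chi$-isotypic component regarded as a representation of $G_L$, and $W$ is irreducible.

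The remaining and most delicate step is to promote $W$ to a character. Since $L/K$ is unramified one has $L^{\operatorname{t}} = K^{\operatorname{t}}$, so the version of Lemma~\ref{uniquertheta} over $L$ produces an extension $\widetilde{\chi}\colon G_L \to \overline{\mathbb{F}}_p^\times$ of $\chi$; the requisite $G_{k_L}$-stability of $\chi$ is exactly the definition of $H$. Twisting $W$ by $\widetilde{\chi}^{-1}$ yields an irreducible continuous representation of $G_L$ on which $I^{\operatorname{t}}$ acts trivially, hence an irreducible continuous representation of $G_{k_L} \cong \widehat{\mathbb{Z}}$ over $\overline{\mathbb{F}}_p$. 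Because $G_{k_L}$ is abelian, Schur's lemma forces this to be one-dimensional, so $W$ is itself a character $\chi'\colon G_L \to \overline{\mathbb{F}}_p^\times$. Counting dimensions via Frobenius reciprocity, $[L:K]$ equals the size of the $G_k$-orbit of $\chi$, which equals $\operatorname{dim}_{\overline{\mathbb{F}}_p} V$. I expect the only real obstacle to be this last step, which combines Lemma~\ref{uniquertheta} with the procyclicity of $G_{k_L}$ to rule out higher-dimensional irreducibles of the tame quotient.
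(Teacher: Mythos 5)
Your proof is correct, and it follows a genuinely different route from the paper's. The paper avoids invoking Clifford's theorem: after passing to the tame quotient (it cites \cite[Proposition 4]{Serre72} rather than your direct wild-inertia fixed-point argument, though both are fine), it picks a character $\chi$ of $I^{\operatorname{t}}$ appearing in $V$, bounds $[G:H] \leq \dim V$ by orbit–stabilizer, and then uses Frobenius reciprocity to get a nonzero map $V|_H \to \operatorname{Ind}_{I^{\operatorname{t}}}^H\chi = \chi\otimes\operatorname{Ind}_{I^{\operatorname{t}}}^H\mathbbm{1}$. Because $\operatorname{Ind}_{I^{\operatorname{t}}}^H\mathbbm{1}$ is only a discrete $H$-module, it truncates to a finite-dimensional subrepresentation $R$, takes a one-dimensional subquotient of $R$ using commutativity of $\operatorname{Gal}(L^{\operatorname{ur}}/L)$, and then reverses Frobenius reciprocity to embed $V$ into $\operatorname{Ind}_L^K(\text{character})$, forcing the inequality $[G:H]\geq\dim V$ and hence equality. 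Your argument instead invokes Clifford's theorem directly to produce $V\cong\operatorname{Ind}_L^K W$ with $W = V_\chi$ irreducible, then twists $W$ by the extension $\widetilde\chi$ from Lemma~\ref{uniquertheta} and uses procyclicity of $G_{k_L}$ plus Schur to conclude $W$ is one-dimensional. This is shorter and more conceptual, but you need to justify Clifford theory in the profinite setting; your opening observation that the representation factors through a finite quotient is exactly what makes that legitimate (the $G_k$-orbit of $\chi$ is then finite, so $H$ has finite index, and one reduces to Clifford for finite groups). The paper's hands-on truncation-of-$\operatorname{Ind}^H_{I^{\operatorname{t}}}\mathbbm{1}$ step is essentially doing the same bookkeeping without naming the theorem. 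In short: same underlying ideas, same use of Lemma~\ref{uniquertheta}, but you package the core dichotomy via Clifford where the paper reconstructs it explicitly via Frobenius reciprocity and a dimension count.
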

\begin{proof}
As $V$ is irreducible the $G_K$-action factors through $G = \operatorname{Gal}(K^{\operatorname{t}}/K)$ by \cite[Proposition 4]{Serre72}. Since $I^{\operatorname{t}}$ is abelian of order prime to $p$, $V|_{I^{\operatorname{t}}}$ is a sum of $\overline{\mathbb{F}}_p^\times$-valued characters. If $\gamma \in G_k$ and $\chi\colon I^{\operatorname{t}} \rightarrow \overline{\mathbb{F}}_p^\times$ is a character define a new character by $\chi^{(\gamma)}(\sigma) = \chi(\gamma^{-1}\sigma\gamma)$. If $I^{\operatorname{t}}$ acts on $v \in V|_{I^{\operatorname{t}}}$ by $\chi$ then $I^{\operatorname{t}}$ acts on $\gamma(v)$ by $\chi^{(\gamma)}$; thus $G_k$ acts on the set of $\chi$ appearing in $V|_{I^{\operatorname{t}}}$. Fix $\chi$ appearing in $V|_{I^{\operatorname{t}}}$ and let $H \subset G$ be the normal subgroup containing $I^{\operatorname{t}}$, corresponding to the stabiliser of $\chi$ in $G_k$. By the orbit-stabiliser theorem $[G:H] \leq \operatorname{dim}_{\overline{\mathbb{F}}_p} V$.

Frobenius reciprocity gives a non-zero map $V|_H \rightarrow \operatorname{Ind}_{I^{\operatorname{t}}}^H \chi$. If $L/K$ is the unramified extension corresponding to $H$ then since the image of $H$ in $G_k$ stabilises $\chi$, this character can be extended to $H$ as in Lemma~\ref{uniquertheta}. Thus $\operatorname{Ind}_{I^{\operatorname{t}}}^H \chi = \chi \otimes \operatorname{Ind}_{I^{\operatorname{t}}}^H \mathbbm{1}$. Since $\operatorname{Ind}_{I^{\operatorname{t}}}^H \mathbbm{1}$ is a discrete $H$-module we can find a finite dimensional sub-representation $R \subset \operatorname{Ind}_{I^{\operatorname{t}}}^H \mathbbm{1}$ so that $V|_H$ is mapped into $\chi \otimes R$. As $\operatorname{Gal}(L^{\operatorname{ur}}/L)$ is abelian $R$ admits a composition series $0 = R_n \subset \ldots \subset R_0 = R$ such that each $R_i/R_{i+1}$ is one-dimensional. If $i$ is the largest integer such that $V|_{H} \rightarrow \operatorname{Ind}^H_{I^{\operatorname{t}}} V$ factors through $\chi \otimes R_i$ then $V|_H \rightarrow \chi \otimes R_i/R_{i+1}$ is non-zero. Frobenius reciprocity gives a non-zero map $V \rightarrow \operatorname{Ind}_L^K (\chi \otimes R_i/R_{i+1})$ which, $V$ being irreducible, is injective. Thus $[G:H] = \operatorname{dim}_{\overline{\mathbb{F}}_p} \operatorname{Ind}_L^K(\chi \otimes R_i/R_{i+1})$ is $\geq \operatorname{dim}_{\overline{\mathbb{F}}_p} V$. The inequality of the first paragraph implies $[G:H] = \operatorname{dim}_{\overline{\mathbb{F}}_p} V$ and so this map is an isomorphism.
\end{proof}
\begin{definition}\label{defofWinert}
Let $\overline{\rho}$ be a continuous representation of $G_K$ on an $n$-dimensional $\overline{\mathbb{F}}_p$-vector space. After Lemma~\ref{irred} there exist continuous characters $\zeta\colon G_{L_\zeta} \rightarrow \overline{\mathbb{F}}_p^\times$ with $L_\zeta / K$ finite unramified, such that
\begin{equation}\label{decomp}
\overline{\rho}^{\operatorname{ss}} \cong \bigoplus_{\zeta} \operatorname{Ind}_{L_\zeta}^K \zeta  
\end{equation}
with each summand irreducible. Let $l_\zeta/k$ denote the residue field of $L_\zeta$. After Lemma~\ref{uniquertheta} there are integers $(r_{\theta,\zeta})_{\theta \in \operatorname{Hom}_{\mathbb{F}_p}(l_\zeta,\overline{\mathbb{F}}_p)}$ such that
$$
\zeta|_{I^{\operatorname{t}}} = \prod \omega_{\theta}^{-r_{\theta,\zeta}}
$$ 
Any such collection of $r_{\theta,\zeta}$ defines a weight $\lambda = (\lambda_\tau)_{\tau \in \operatorname{Hom}_{\mathbb{F}_p}(k,\overline{\mathbb{F}}_p)}$ via $\lambda_\tau = \lbrace r_{\theta,\zeta} \mid \theta|_k = \tau \rbrace$. Define $\operatorname{Inert}(\overline{\rho})$ to be the set of $\lambda$ obtained in this way. 
\end{definition}

It is easy to check that $\operatorname{Inert}(\overline{\rho})$ depends only on $\overline{\rho}^{\operatorname{ss}}|_{I^{\operatorname{t}}}$.

\subsection{$G_{K_\infty}$-representations}
\begin{lemma}\label{intyisom}
Let $K_\infty^{\operatorname{t}} = K_\infty K^{\operatorname{t}}$. Then restriction defines an isomorphism $\operatorname{Gal}(K_\infty^{\operatorname{t}}/K_\infty) \rightarrow \operatorname{Gal}(K^{\operatorname{t}}/K)$. If $L/K$ is a tamely ramified extension this isomorphism identifies $\operatorname{Gal}(L_\infty/K_\infty)$ with $\operatorname{Gal}(L/K)$ where $L_\infty = L K_\infty$.
\end{lemma}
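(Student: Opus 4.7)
The plan is to reduce everything to the standard Galois-theoretic fact: for a Galois extension $M/K$ and any algebraic extension $F/K$ (inside a common algebraic closure), the restriction map $\operatorname{Gal}(MF/F) \to \operatorname{Gal}(M/M\cap F)$ is an isomorphism. Applying this with $M = K^{\operatorname{t}}$ and $F = K_\infty$ gives $\operatorname{Gal}(K_\infty^{\operatorname{t}}/K_\infty) \cong \operatorname{Gal}(K^{\operatorname{t}}/K^{\operatorname{t}} \cap K_\infty)$, so the first claim amounts to showing $K^{\operatorname{t}} \cap K_\infty = K$.

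To prove $K^{\operatorname{t}} \cap K_\infty = K$, I would argue that $K_\infty/K$ is totally wildly ramified, i.e.\ every nontrivial finite subextension has ramification index a nontrivial power of $p$. Indeed, any finite subextension $F/K$ of $K_\infty/K$ lies in some $K(\pi^{1/p^n})$. Since $K(\pi^{1/p^n})/K$ is totally ramified of degree $p^n$ (the minimal polynomial $X^{p^n} - \pi$ is Eisenstein), so is $F/K$; thus $[F:K]$ is a power of $p$ and equals the ramification index of $F/K$. If in addition $F \subset K^{\operatorname{t}}$, this ramification index is coprime to $p$, forcing $[F:K]=1$. Taking the union over all such $F$ gives $K^{\operatorname{t}} \cap K_\infty = K$.

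For the second statement, I would transport subfields through the isomorphism $\operatorname{Gal}(K_\infty^{\operatorname{t}}/K_\infty) \xrightarrow{\sim} \operatorname{Gal}(K^{\operatorname{t}}/K)$ via the Galois correspondence. A tamely ramified extension $L/K$ lies in $K^{\operatorname{t}}$ and corresponds to the (not necessarily normal) subgroup $H = \operatorname{Gal}(K^{\operatorname{t}}/L)$; under restriction, $H$ pulls back to
\[
\{\sigma \in \operatorname{Gal}(K_\infty^{\operatorname{t}}/K_\infty) : \sigma|_{K^{\operatorname{t}}} \text{ fixes } L\} = \{\sigma \in \operatorname{Gal}(K_\infty^{\operatorname{t}}/K_\infty) : \sigma \text{ fixes } L\},
\]
which is exactly $\operatorname{Gal}(K_\infty^{\operatorname{t}}/L_\infty)$ since any such $\sigma$ fixes both $L$ and $K_\infty$ (and conversely). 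Taking the quotient of $\operatorname{Gal}(K_\infty^{\operatorname{t}}/K_\infty)$ by this subgroup (respectively the corresponding quotient on the $\operatorname{Gal}(K^{\operatorname{t}}/K)$ side, which is $\operatorname{Gal}(L/K)$ when $L/K$ is Galois, and a coset space with the same cardinality in general) then yields the identification of $\operatorname{Gal}(L_\infty/K_\infty)$ with $\operatorname{Gal}(L/K)$.

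The only real content is the disjointness statement $K_\infty \cap K^{\operatorname{t}} = K$; the rest is formal Galois theory. I don't anticipate any obstacle, since the totally ramified, $p$-power-degree nature of the Kummer tower $K(\pi^{1/p^\infty})/K$ is immediate from the Eisenstein property of $X^{p^n}-\pi$.
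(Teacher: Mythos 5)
Your proof is correct and takes the same approach as the paper: both arguments reduce to showing $K_\infty \cap K^{\operatorname{t}} = K$ via the total wild ramification of $K_\infty/K$, then conclude by standard Galois theory. You simply spell out the Eisenstein argument and the transport of subgroups, which the paper leaves implicit.
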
 
\begin{proof}
Since $K_\infty/K$ is totally wildly ramified we have $K_\infty \cap K^{\operatorname{t}} = K$. The lemma then follows from Galois theory.
\end{proof}

\begin{corollary}\label{restricttoGKinfty}
	If $V$ is as in Lemma~\ref{irred} then $V|_{G_{K_\infty}} \cong \operatorname{Ind}_{L_\infty}^{K_\infty} \chi|_{G_{L_\infty}}$ where $L_\infty = LK_\infty$.
\end{corollary}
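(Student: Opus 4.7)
The plan is to apply Mackey's formula (double coset decomposition) to $\operatorname{Ind}_L^K \chi$ restricted to $G_{K_\infty}$, and show that the decomposition has just a single summand, which turns out to be exactly $\operatorname{Ind}_{L_\infty}^{K_\infty} \chi|_{G_{L_\infty}}$. The whole argument rests on the linear disjointness of $L$ and $K_\infty$ over $K$: since $L/K$ is unramified while $K_\infty/K$ is totally (wildly) ramified, one has $L \cap K_\infty = K$.

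First, I would invoke Mackey's formula in the form
\[
(\operatorname{Ind}_L^K \chi)\big|_{G_{K_\infty}} \;\cong\; \bigoplus_{g \,\in\, G_L \backslash G_K / G_{K_\infty}} \operatorname{Ind}_{g^{-1}G_L g \,\cap\, G_{K_\infty}}^{G_{K_\infty}} \bigl(\chi^g\bigr)\big|_{g^{-1}G_L g \,\cap\, G_{K_\infty}},
\]
so the content of the corollary is that the double coset space $G_L \backslash G_K / G_{K_\infty}$ is a singleton and that $G_L \cap G_{K_\infty} = G_{L_\infty}$.

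For the first point, the natural map $G_{K_\infty} \to \operatorname{Gal}(L/K)$ obtained by restriction factors through $\operatorname{Gal}(L_\infty/K_\infty)$, which by $L \cap K_\infty = K$ has order $[L:K] = [G_K : G_L]$; hence restriction is surjective onto $\operatorname{Gal}(L/K)$. Equivalently, $G_K = G_L \cdot G_{K_\infty}$, which means there is exactly one $(G_L, G_{K_\infty})$-double coset in $G_K$. For the second point, $L_\infty = L \cdot K_\infty$ corresponds by Galois theory to $G_L \cap G_{K_\infty}$ inside $G_K$.

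Combining these two observations, the Mackey sum collapses to
\[
(\operatorname{Ind}_L^K \chi)\big|_{G_{K_\infty}} \;\cong\; \operatorname{Ind}_{G_{L_\infty}}^{G_{K_\infty}} \chi\big|_{G_{L_\infty}},
\]
which is the desired isomorphism. There is no genuine obstacle here: the only subtlety to flag is that $L \cap K_\infty = K$ truly requires both unramifiedness of $L/K$ and total ramification of $K_\infty/K$, but this is immediate (and is the same input used in the proof of Lemma~\ref{intyisom}).
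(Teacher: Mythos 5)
Your Mackey argument is correct, and it is a genuinely different (if closely related) route from what the paper has in mind. The corollary is stated without proof in the paper and is intended to follow from Lemma~\ref{intyisom}: since $V$ is irreducible it factors through $\operatorname{Gal}(K^{\operatorname{t}}/K)$, Lemma~\ref{intyisom} identifies $\operatorname{Gal}(K^{\operatorname{t}}_\infty/K_\infty)$ with $\operatorname{Gal}(K^{\operatorname{t}}/K)$ by restriction in a way that carries $\operatorname{Gal}(L_\infty/K_\infty)$ to $\operatorname{Gal}(L/K)$, and one then transports the induced-module description of $V$ through this group isomorphism. You instead apply Mackey's restriction formula directly to $\operatorname{Ind}_{G_L}^{G_K}\chi$ along $G_{K_\infty}$ and verify that there is a single $(G_L,G_{K_\infty})$-double coset and that $G_L \cap G_{K_\infty} = G_{L_\infty}$, both consequences of $L \cap K_\infty = K$. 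Both proofs ultimately rest on the same disjointness input (unramified versus totally wildly ramified), but yours bypasses the tame-quotient isomorphism entirely and treats the question as a pure double-coset computation. The trade-off is that the Mackey route is self-contained and requires no special structure of the Galois groups involved, whereas the paper's route reuses Lemma~\ref{intyisom}, which it has set up anyway and which records the stronger structural statement. One small point worth making explicit in your write-up: the Mackey formula applies here because $G_L$ is open (hence of finite index) in $G_K$, so the double coset space is finite and the continuity of $\chi$ is inherited by each summand.
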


\section{Filtrations}\label{filtration}

This section contains some elementary results on filtered modules; they will be useful later. Consider a commutative ring $A$ and a collection of ideals $(F^iA)_{i \in \mathbb{Z}}$ satisfying
$$
F^{i+1} A \subset F^iA, \qquad (F^i A)(F^j A) \subset F^{i+j}A,\qquad F^i A = A \text{ for $i << 0 $}
$$
Then the category $\operatorname{Fil}(A)$ of filtered $A$-modules consists of $A$-modules $M$ equipped with a collection of $A$-sub-modules $(F^i M)_{i \in \mathbb{Z}}$ satisfying
$$
F^{i+1} M \subset F^i M, \qquad (F^i A)(F^j M) \subset F^{i+j} M,\qquad F^i M = M \text{ for $i << 0$}
$$ 
Morphisms are maps $f\colon M \rightarrow N$ of $A$-modules such that $f(F^iM) \subset F^iN$ for all $i$. If $M$ is an object of $\operatorname{Fil}(A)$ we set $\operatorname{gr}(M) = \bigoplus_i \operatorname{gr}^i (M)$ where $\operatorname{gr}^i (M) = F^i M / F^{i+1}M$. The module $\operatorname{gr}(A)$ admits an obvious structure of a ring and each $\operatorname{gr}(M)$ admits the structure of a module over $\operatorname{gr}(A)$.

\subsection{Strict maps}
If $M$ is an object of $\operatorname{Fil}(A)$ and $N \subset M$ is an $A$-sub-module the induced filtration on $N$ is that given by $F^i N = N \cap F^i M$. If $f\colon M \rightarrow N$ is a surjective $A$-module homomorphism the quotient filtration on $N$ is that given by $F^i N = f(F^i M)$. 

\begin{remark}
For any morphism $f: M \rightarrow N$ in $\operatorname{Fil}(A)$ there is a sequence
$$
\operatorname{ker}(f) \rightarrow M \rightarrow \operatorname{coim}(f) \rightarrow \operatorname{im}(f) \rightarrow N \rightarrow \operatorname{coker}(f)
$$
in $\operatorname{Fil}(A)$. The modules $\operatorname{ker}(f) \subset M$  and $\operatorname{im}(f) \subset N$ are each equipped with the induced filtration. The modules $\operatorname{coker}(f)$ and $\operatorname{coim}(f)$ are equipped with the quotient filtration, coming from $N$ and $M$ respectively.
\end{remark}
\begin{definition}
A morphism $f: M \rightarrow N$ in $\operatorname{Fil}(A)$ is strict if $F^i N \cap f(M) = f(F^i M)$ for all $i \in \mathbb{Z}$. Equivalently $f$ is strict if $\operatorname{coim}(f) \rightarrow \operatorname{im}f$ is an isomorphism in $\operatorname{Fil}(A)$.
\end{definition}

\begin{notation}
The filtration on $A$ induces the structure of a topological ring on $A$; the $F^i A$ form a basis of open neighbourhoods of zero. Similarly the filtration on an object $M$ of $\operatorname{Fil}(A)$ gives $M$ the structure of a topological $A$-module. Then
\begin{itemize}
\item $M$ is discrete if and only if $F^i M = 0$ for $i>>0$;
\item $M$ is Hausdorff if and only if $\cap F^iM = 0$;
\item $M$ is complete if and only if the natural map $M \rightarrow \varprojlim M/F^iM$ is an isomorphism.
\end{itemize}
\end{notation}

\begin{lemma}\label{caenstuff}
Let $f: M \rightarrow N$ be a morphism in $\operatorname{Fil}(A)$ which is an isomorphism of $A$-modules. 
\begin{enumerate}

\item Then $f$ is an isomorphism in $\operatorname{Fil}(A)$ if and only if $\operatorname{gr}^i (M)\rightarrow \operatorname{gr}^i (N)$ is injective for all $i$.
\item If $M$ is complete and $N$ Hausdorff then $f$ is an isomorphism in $\operatorname{Fil}(A)$ if and only if $\operatorname{gr}^i (M) \rightarrow \operatorname{gr}^i (N)$ is surjective for all $i$.
\end{enumerate}
\end{lemma}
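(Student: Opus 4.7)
The plan is to reduce both statements to showing $F^iN \subseteq f(F^iM)$, since $f$ being filtered gives $f(F^iM) \subseteq F^iN$ automatically, and invertibility of $f$ in $\operatorname{Fil}(A)$ is equivalent to $f(F^iM) = F^iN$ for every $i$. The forward directions are immediate in both parts: if $f$ is a filtered isomorphism then each $\operatorname{gr}^i(f)$ is itself an isomorphism, so in particular injective and surjective. So the work is in proving the converse implications.

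For part~(1), I would fix $x \in F^iN$, use that $f$ is an $A$-module isomorphism to write $x = f(y)$, and then argue that $y \in F^iM$. Because $F^jM = M$ for $j$ very negative and the filtration is descending, the set $S = \{j \in \mathbb{Z} : y \in F^jM\}$ is of the form $(-\infty,j_0]$ or all of $\mathbb{Z}$. In the second case $y \in F^iM$ trivially. In the first case, if $j_0 < i$, then the class $\bar y \in \operatorname{gr}^{j_0}(M)$ is non-zero, yet its image in $\operatorname{gr}^{j_0}(N)$ is zero since $f(y) = x \in F^iN \subseteq F^{j_0+1}N$; this contradicts injectivity of $\operatorname{gr}^{j_0}(f)$, forcing $j_0 \geq i$ and hence $y \in F^iM$.

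For part~(2), given $x \in F^iN$, I would inductively build a formal series lifting $x$ through $f$. Using surjectivity of $\operatorname{gr}^i(f)$ pick $y_0 \in F^iM$ with $f(y_0) \equiv x \pmod{F^{i+1}N}$; then using surjectivity of $\operatorname{gr}^{i+1}(f)$ pick $y_1 \in F^{i+1}M$ with $f(y_1) \equiv x - f(y_0) \pmod{F^{i+2}N}$; and so on, producing $y_k \in F^{i+k}M$ with
$$
x - f(y_0 + \cdots + y_k) \in F^{i+k+1}N.
$$
The partial sums $s_k = y_0 + \cdots + y_k$ are Cauchy for the filtration topology on $M$ (as $s_{\ell} - s_k \in F^{i+k+1}M$ for $\ell > k$), and completeness of $M$ produces a limit $y \in M$; since $y - s_k \in F^{i+k+1}M$ for every $k$, we have $y \in F^iM$. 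Continuity of $f$ gives $f(s_k) \to f(y)$, while by construction $f(s_k) \to x$, and Hausdorffness of $N$ forces $f(y) = x$.

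I don't expect any single step to be a real obstacle; the only subtleties are the edge case in part~(1) when $y$ lies in every step of the filtration on $M$, and making sure in part~(2) that the completeness of $M$ and Hausdorffness of $N$ are invoked in exactly the right places — completeness to get a limit $y$, and Hausdorffness to identify its image as $x$.
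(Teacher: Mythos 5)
Your proof is correct. For part~(2) the argument is exactly the standard one (building a Cauchy series of approximants $y_k$, using completeness of $M$ to produce a limit and Hausdorffness of $N$ to identify its image), which is also what the paper invokes by referring to Serre's \emph{Local Algebra}, Proposition~6. For part~(1) your route differs mildly in presentation: the paper forms the commutative ladder with rows $0 \to F^{i+1} \to F^i \to \operatorname{gr}^i \to 0$, applies the snake lemma, and proves surjectivity of $F^iM \to F^iN$ by increasing induction on $i$ from the base case $i \ll 0$; you instead pick $y = f^{-1}(x)$ for $x \in F^iN$ and derive a contradiction from the largest $j_0 < i$ with $y \in F^{j_0}M$. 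These are two ways of saying the same thing, with yours slightly more hands-on and the paper's more in the homological idiom. You correctly cover the degenerate case where $y \in \bigcap_j F^jM$, which is the one place a careless version of your argument could slip since $M$ is not assumed Hausdorff in part~(1). No gaps.
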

\begin{proof}
The following diagram commutes and has exact rows.
$$
\begin{tikzcd}[column sep =small, row sep=small]
0 \arrow{r} & F^{i+1} M \arrow{r} \arrow[hookrightarrow]{d}{a} & F^{i} M \arrow{r} \arrow[hookrightarrow]{d}{b} & \operatorname{gr}^i(M) \arrow{r} \arrow{d}{c} & 0 \\
0 \arrow{r} & F^{i+1} N \arrow{r}  & F^{i} N \arrow{r}  & \operatorname{gr}^i(N) \arrow{r} & 0
\end{tikzcd}
$$
Since $M \rightarrow N$ is an isomorphism of $A$-modules the leftmost and central vertical arrows are injective. For (1) use the snake lemma to obtain an exact sequence $0 \rightarrow \operatorname{ker} c \rightarrow \operatorname{coker}(a) \rightarrow \operatorname{coker}(b) \rightarrow \operatorname{coker}(c)$. One proves $F^i M \rightarrow F^i N$ is surjective by increasing induction on $i$; using as the base case the fact that $F^i M \rightarrow F^iN$ is surjective for $i <<0$, since $F^iM = M$ for $i <<0$. For (2) argue as in \cite[Proposition 6]{SerreLA}.
\end{proof}
\begin{lemma}\label{fantasticstuff}
Let $f: M \rightarrow N$ be a morphism in $\operatorname{Fil}(A)$. Then the following are equivalent.
\begin{enumerate}

\item $f$ is strict;
\item $\operatorname{gr} (\operatorname{ker}(f)) \rightarrow \operatorname{gr}(M) \rightarrow \operatorname{gr} (N)$ is exact;
\item $0 \rightarrow \operatorname{gr} (\operatorname{ker}(f)) \rightarrow \operatorname{gr}(M) \rightarrow \operatorname{gr} (N) \rightarrow \operatorname{gr}(\operatorname{coker}(f)) \rightarrow 0$ is exact.
\end{enumerate}
If $M$ is complete and $N$ is Hausdorff then the same is true with $(2)$ replaced by
\begin{enumerate}
\item[$(2')$] $ \operatorname{gr} (M)\rightarrow \operatorname{gr} (N) \rightarrow \operatorname{gr}(\operatorname{coker}(f))$ is exact for all $i$;
\end{enumerate}
\end{lemma}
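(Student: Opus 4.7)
The plan is to split (3) into its automatic and genuine pieces and compare each to strictness by direct element chases. First, unwinding the induced and quotient filtrations, the map $\operatorname{gr}^i(\operatorname{ker} f) \to \operatorname{gr}^i(M)$ is always injective and $\operatorname{gr}^i(N) \to \operatorname{gr}^i(\operatorname{coker} f)$ is always surjective; consequently (3) is equivalent, with no topological hypothesis, to the conjunction of (2) and (2').

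For (1) $\Leftrightarrow$ (2): the implication (1) $\Rightarrow$ (2) is immediate, since if $f(x) \in F^{i+1}N$ with $x \in F^iM$, strictness writes $f(x) = f(z)$ for some $z \in F^{i+1}M$, so $x - z \in \operatorname{ker}(f) \cap F^iM$ lifts the class of $x$. Conversely, given $y \in F^iN \cap f(M)$, pick $x \in M$ with $f(x) = y$ and choose $j_0$ with $x \in F^{j_0}M$ (possible by exhaustivity of the filtration on $M$). If $j_0 < i$ then $f(x) \in F^{j_0+1}N$, and (2) supplies $z \in \operatorname{ker}(f) \cap F^{j_0}M$ with $x - z \in F^{j_0+1}M$; replacing $x$ by $x-z$ keeps $y = f(x)$ and increments $j_0$. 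After the (finitely many) remaining iterations $x$ lies in $F^iM$. The analogous chase on the $N$-side gives (1) $\Rightarrow$ (2').

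The only substantial step is (2') $\Rightarrow$ (1), under completeness of $M$ and Hausdorffness of $N$. Here I would build inductively elements $x_k \in F^{i+k}M$ such that $y - f(x_0 + \cdots + x_k) \in F^{i+k+1}N \cap f(M)$, each $x_k$ supplied by one application of (2') to the remainder. Completeness of $M$ makes the partial sums converge to some $x \in F^iM$; continuity of $f$ transports convergence to $N$, and the tail bounds $y - f(x_0 + \cdots + x_k) \in F^{i+k+1}N$ force $y - f(x) \in \bigcap_j F^jN = 0$ by Hausdorffness, giving $y = f(x) \in f(F^iM)$. This successive-approximation argument is the main obstacle --- it is the only place the topological hypotheses enter --- and it closely mirrors the Banach-style proof in \cite[Proposition 6]{SerreLA}, already invoked in the preceding lemma.
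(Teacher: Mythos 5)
Your proof is correct, and it arrives at the result by a somewhat different route than the paper. The paper observes that strictness of $f$ is by definition the assertion that the canonical morphism $\operatorname{coim}(f) \rightarrow \operatorname{im}(f)$ is an isomorphism in $\operatorname{Fil}(A)$, and then reduces conditions (2), ($2'$), (3) to injectivity, surjectivity, and bijectivity of the induced maps on graded pieces of $\operatorname{coim}(f)\rightarrow\operatorname{im}(f)$; the equivalences then drop out of Lemma~\ref{caenstuff} applied to that single morphism (using that $M$ complete forces $\operatorname{coim}(f)$ complete, and $N$ Hausdorff forces $\operatorname{im}(f)$ Hausdorff). You instead split (3) into its automatic parts plus (2) and ($2'$), and then handle (1) $\Leftrightarrow$ (2), (1) $\Rightarrow$ ($2'$), and ($2'$) $\Rightarrow$ (1) by direct element chases. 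The two approaches are logically parallel: your finite descent in the (2) $\Rightarrow$ (1) step mirrors Lemma~\ref{caenstuff}(1), and your successive-approximation argument for ($2'$) $\Rightarrow$ (1) is exactly the content of Lemma~\ref{caenstuff}(2), which the paper in turn reduces to \cite[Proposition 6]{SerreLA}. The paper's packaging buys reusability of Lemma~\ref{caenstuff} and a cleaner statement of where the topology is used; your direct version makes the element-level mechanism explicit without needing to first identify $\operatorname{coim}(f)\rightarrow\operatorname{im}(f)$ as the right auxiliary map. One small point worth flagging: your ($2'$) $\Rightarrow$ (1) argument implicitly uses that $F^iM$ is closed in the complete module $M$ (so that the limit $x$ of the partial sums lands in $F^iM$) and that $f$ is continuous (being filtered); both hold, but it is worth saying so.
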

\begin{proof}
It is straightforward to check (without any conditions on $M$ and $N$) that (2) is equivalent to $\operatorname{gr}^i\operatorname{coim}(f) \rightarrow \operatorname{gr}^i \operatorname{im}(f)$ being injective for all $i$, that ($2'$) is equivalent to this map being surjective for all $i$, and that (3) is equivalent to this map being an isomorphism for all $i$. Thus $(1) \Leftrightarrow (2) \Leftrightarrow (3)$ follows from Lemma~\ref{caenstuff}(1) applied to the morphism $\operatorname{coim}(f) \rightarrow \operatorname{im}(f)$. Similarly $(1) \Leftrightarrow (2') \Leftrightarrow (3)$ follows from  Lemma~\ref{caenstuff}(2), noting that $M$ being complete implies $\operatorname{coim}(f)$ is complete and $N$ being Hausdorff implies $\operatorname{im}(f)$ is Hausdorff. 
\end{proof}

\begin{corollary}\label{stuffstuffcor}
Let $M$ be a Hausdorff object of $\operatorname{Fil}(A)$ with $A$ complete. Suppose $(m_j)$ is a finite collection of elements of $M$ and suppose that there are integers $r_j$ such that $m_j \in F^{r_j} M$. Let $\overline{m}_j$ denote the image of $m_j$ in $\operatorname{gr}^{r_j}(M)$. If the $\overline{m}_j$ generate $\operatorname{gr}(M)$ over $\operatorname{gr}(A)$ then $M$ is complete and the $m_j$ generate $M$. Further 
$$
F^i M = \sum_j (F^{i-r_j}A)m_j
$$
If the $\overline{m}_j$ form a $\operatorname{gr}(A)$-basis of $\operatorname{gr}(M)$ then the $m_j$ are an $A$-basis of $M$.
\end{corollary}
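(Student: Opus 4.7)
The plan is to introduce a free filtered $A$-module $P$ that matches $M$ on the associated graded and then show the obvious map $P \to M$ is a strict surjection whose kernel vanishes when the $\overline{m}_j$ form a basis. Set $P = \bigoplus_j A\cdot e_j$ (a finite direct sum) with filtration $F^iP = \bigoplus_j F^{i-r_j}A \cdot e_j$. Since the sum is finite and $A$ is complete (hence Hausdorff), $P$ is itself complete and Hausdorff in $\operatorname{Fil}(A)$, and $\operatorname{gr}(P)$ is the free $\operatorname{gr}(A)$-module on the classes $\overline{e}_j$, with $\overline{e}_j$ in degree $r_j$. The map $f\colon P \to M$ sending $e_j \mapsto m_j$ is a morphism in $\operatorname{Fil}(A)$ whose associated graded sends $\overline{e}_j$ to $\overline{m}_j$; by hypothesis $\operatorname{gr}(f)$ is surjective, and it is bijective in the basis case.

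The crux is to show $f$ is a strict surjection with $f(F^iP) = F^iM$ for every $i$, which yields both the generation statement and the displayed formula for $F^iM$. For $m \in F^{i_0}M$, surjectivity of $\operatorname{gr}(f)$ lets me write $\overline{m} = \sum_j \overline{a}_j^{(0)} \overline{m}_j$ with $\overline{a}_j^{(0)} \in \operatorname{gr}^{i_0 - r_j}(A)$, lift to $a_j^{(0)} \in F^{i_0 - r_j}A$, and set $m^{(1)} := m - \sum_j a_j^{(0)} m_j \in F^{i_0+1}M$. Iterating produces $a_j^{(k)} \in F^{i_0+k-r_j}A$; completeness of $A$ makes the series $a_j := \sum_k a_j^{(k)}$ converge in $F^{i_0-r_j}A$, and an elementary estimate shows $m - \sum_j a_j m_j \in F^{i_0+N}M$ for every $N$, hence vanishes by the Hausdorff property of $M$.

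To deduce that $M$ is complete, I would use that $K := \ker(f)$ is closed in $P$ (because $M$ is Hausdorff, $\{0\}$ is closed in $M$, and $K = f^{-1}(0)$). Strictness of $f$ makes
$$
0 \to K/F^iK \to P/F^iP \to M/F^iM \to 0
$$
exact for every $i$, with surjective transition maps on $\{K/F^iK\}$; Mittag--Leffler then identifies $\varprojlim M/F^iM$ with $P/\bigl(\bigcap_i (K + F^iP)\bigr)$, and closedness of $K$ forces this intersection to equal $K$. Hence $M = P/K \to \varprojlim M/F^iM$ is an isomorphism.

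For the basis statement, bijectivity of $\operatorname{gr}(f)$ combined with Lemma~\ref{fantasticstuff}(3) applied to the strict surjection $f$ gives $\operatorname{gr}(K) = 0$; since $K$ is a submodule of the Hausdorff module $P$, the identities $F^iK = F^{i+1}K$ for all $i$, together with $F^iK = K$ for $i \ll 0$, then force $K = 0$, so $f$ is an isomorphism of filtered modules. The step I expect to be the main obstacle is the successive-approximation lifting in the second paragraph: it is the only place where completeness of $A$ and the Hausdorff hypothesis on $M$ meaningfully cooperate, and once $f$ is known to be a strict surjection every other assertion in the corollary reduces to formal manipulation of the exact sequence $0 \to K \to P \to M \to 0$.
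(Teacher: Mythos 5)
Your proof is correct and is, in substance, exactly the Serre \emph{Local Algebra} argument that the paper's one-line proof cites: introduce the auxiliary free filtered module $P$, run a successive-approximation lift using completeness of $A$ and the Hausdorff property of $M$ to show $P \to M$ is a strict surjection, and then deduce completeness and the basis statement formally. The only cosmetic differences are that you carry out the approximation directly rather than packaging it as the implication $(2')\Rightarrow(1)$ of Lemma~\ref{fantasticstuff} (which is what the paper's reference to ``the second part'' of that lemma intends), and that you make the completeness of $M$ explicit via closedness of $\ker f$ and a Mittag--Leffler argument rather than quoting it from the intermediate proposition.
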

\begin{proof}
Argue as in \cite[Corollary]{SerreLA} using the second part of Lemma~\ref{fantasticstuff}.
\end{proof}

\subsection{Adapted bases}
We now put ourselves in the following situation. Let $a \in A$ be a nonzerodivisor and equip $A$ with the $a$-adic filtration (so $F^iA = a^iA$). Let $M$ be a finite free $A$-module and let $N \subset M[\frac{1}{a}]$ be a finitely generated $A$-sub-module with $N[\frac{1}{a}] = M[\frac{1}{a}]$. Make $N$ into an object of $\operatorname{Fil}(A)$ by setting $F^i N = a^iM \cap N$.

\begin{lemma}\label{liftinggr}
Suppose that $A$ is complete. Give $N/a$ the quotient filtration and suppose that a finite collection $(g_i)$ of elements of $N$ is given along with integers $(r_i)$ such that $g_i \in F^{r_i} N$. If the images of $g_i$ in $\operatorname{gr}^{r_i}(N/a)$ form a $\operatorname{gr}(A/a) = A/a$-basis of $\operatorname{gr}(N/a)$ then the $(g_i)$ form a basis of $N$ and the $(a^{-r_i}g_i)$ form a basis of $M$.
\end{lemma}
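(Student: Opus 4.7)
The plan is to deduce the lemma from two applications of Corollary~\ref{stuffstuffcor}: the first applied to $N$ (to show that $\{g_i\}$ is an $A$-basis of $N$) and the second applied to $M$ endowed with its $a$-adic filtration (to show that $\{f_i := a^{-r_i}g_i\}$ is an $A$-basis of $M$). In each case the task reduces to upgrading a basis statement about the quotient by $\bar a$ to a basis statement about the full associated graded module.

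For the first application, I would begin by establishing the short exact sequence
$$
0 \to \operatorname{gr}^{i-1}(N) \xrightarrow{\bar a} \operatorname{gr}^i(N) \to \operatorname{gr}^i(N/aN) \to 0
$$
for every $i \in \mathbb{Z}$, where $\bar a \in \operatorname{gr}^1(A)$ denotes the image of $a$. The essential algebraic input is the identity $aN \cap F^{i+1}N = aF^iN$ together with the injectivity of multiplication by $\bar a$ on $\operatorname{gr}^i(N)$; both follow from $M$ being free and $a$ a nonzerodivisor. The hypothesis forces $\operatorname{gr}(N/aN)$ to vanish outside the degrees $\{r_i\}$, so iterating the sequence backwards yields $\operatorname{gr}^j(N) = 0$ for $j < \min_i r_i$. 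An increasing induction on $j$ then splits the sequence in each degree using the lifts $\tilde g_i \in \operatorname{gr}^{r_i}(N)$ of $\bar g_i$, producing a $\operatorname{gr}(A) \cong (A/a)[\bar a]$-basis of $\operatorname{gr}(N)$ formed by the $\tilde g_i$. Since $A$ is complete and $N$ is Hausdorff (because $M$ is $a$-adically separated over the complete $A$), Corollary~\ref{stuffstuffcor} then yields that $\{g_i\}$ is an $A$-basis of $N$.

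For the second application, note that $f_i \in M$ because $g_i \in F^{r_i}N \subset a^{r_i}M$. Equipping $M$ with the $a$-adic filtration makes it complete and Hausdorff with $\operatorname{gr}(M) \cong (M/aM)[\bar a]$, so by Corollary~\ref{stuffstuffcor} it suffices to show that $\{\bar f_i\}$ is an $A/a$-basis of $M/aM$. This I would verify by tracking the basis $\{\tilde g_i\}$ of $\operatorname{gr}(N)$ through the inclusion $\operatorname{gr}(N) \hookrightarrow \operatorname{gr}(M[\frac{1}{a}]) \cong (M/aM)[\bar a, \bar a^{-1}]$ induced by $F^j N \subset a^j M$: the element $\tilde g_i$ corresponds to $\bar a^{r_i} \bar f_i$, so after inverting $\bar a$ the set $\{\tilde g_i\}$ becomes a homogeneous $(A/a)[\bar a, \bar a^{-1}]$-basis of $(M/aM)[\bar a, \bar a^{-1}]$, and restricting to degree zero gives the required basis of $M/aM$. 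The main obstacle is establishing the short exact sequence and executing the graded induction; everything beyond that amounts to bookkeeping of degree shifts among $\operatorname{gr}(N)$, $\operatorname{gr}(N/aN)$ and $\operatorname{gr}(M[\frac{1}{a}])$.
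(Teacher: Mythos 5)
For the first claim (that $(g_i)$ is an $A$-basis of $N$) your argument is essentially the paper's: both establish the exact sequence $0 \to \operatorname{gr}^{i-1}(N) \xrightarrow{\bar a} \operatorname{gr}^i(N) \to \operatorname{gr}^i(N/a) \to 0$, deduce that the $\tilde g_i$ form a $\operatorname{gr}(A)$-basis of $\operatorname{gr}(N)$ (the paper invokes graded Nakayama plus $\cap a^i\operatorname{gr}(A)=0$; you run an increasing degree induction — the same content), and then invoke Corollary~\ref{stuffstuffcor}.

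Where you diverge is the second claim, that the $f_i := a^{-r_i}g_i$ form an $A$-basis of $M$. The paper gets this almost for free: linear independence is inherited from the $g_i$, and spanning follows by a short element-chase using the \emph{extra} output of Corollary~\ref{stuffstuffcor}, namely the explicit identity $F^nN = \sum (F^{n-r_i}A)g_i$; one takes $m\in M$, chooses $n$ with $a^nm\in N$, expands in the $g_i$, and divides by $a^n$. You instead run Corollary~\ref{stuffstuffcor} a second time, with $M$ carrying the $a$-adic filtration, and reduce to showing that the $\bar f_i$ form an $A/a$-basis of $M/aM$ by tracking $\tilde g_i \mapsto \bar a^{r_i}\bar f_i$ through $\operatorname{gr}(N) \hookrightarrow \operatorname{gr}(M[\frac{1}{a}]) \cong (M/aM)[\bar a,\bar a^{-1}]$. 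This is a valid and somewhat more systematic route (both halves become applications of the same corollary), but it does cost something: you need to know that the inclusion $\operatorname{gr}(N) \hookrightarrow \operatorname{gr}(M[\frac{1}{a}])$ becomes an \emph{isomorphism} after inverting $\bar a$ before you can conclude that the $\bar a^{r_i}\bar f_i$ form a basis of $(M/aM)[\bar a,\bar a^{-1}]$. That fact is true --- since $M$ is finitely generated and $N[\frac{1}{a}]=M[\frac{1}{a}]$ there is a $k$ with $a^kM\subset N$, which gives surjectivity of the localized map --- but it is a genuine step, not mere ``bookkeeping of degree shifts,'' and you should spell it out. The paper's element-chase avoids this entirely because the needed inclusion $F^{n-r_i}A \cdot a^{r_i - n} \subset A$ is immediate; it is shorter, at the cost of looking less structural. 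Both approaches are sound.
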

\begin{proof}
The induced filtration on the kernel $aN$ of $N \rightarrow N/a$ is given by $F^i(aN) = aN \cap F^iN = aF^{i-1} N$ (because $a$ is not a zerodivisor). Lemma~\ref{fantasticstuff} implies there is an exact sequence
\begin{equation}\label{exactsequenceusedlater}
0 \rightarrow \operatorname{gr}^{i-1}(N) \xrightarrow{a} \operatorname{gr}^i(N) \rightarrow \operatorname{gr}^i (N/a) \rightarrow 0
\end{equation}
Thus $\operatorname{gr}(N)/a = \operatorname{gr}(N/a)$ where $a \in \operatorname{gr}(A)$ denotes the homogeneous element of degree $1$ represented by $a \in A$. It is then easy to see (e.g. using the graded version of Nakayama's lemma) that the images of the $g_i$ in $\operatorname{gr}(N)$ generate this module over $\operatorname{gr}(A)$. Since $ \cap_i a^i\operatorname{gr}(A) = 0$ they are also $\operatorname{gr}(A)$-linearly independent. As $N$ is finitely generated $N$ is Hausdorf and so we may apply Corollary~\ref{stuffstuffcor} to deduce that the $(g_i)$ form an $A$-basis of $N$ and that
$$
F^n N = \sum (F^{n-r_i}A)g_i
$$
As the $g_i$ are $A$-linearly independent the $(a^{-r_i}g_i)$ are $A$-linearly independent. To show they generate $M$ take $m \in M$ and $n$ large enough that $a^nm \in N$. Then $a^nm \in F^nN$ and so $a^nm = \sum a_i g_i$ with $a_i \in F^{n-r_i}A$. It follows that $m = \sum (a^{r_i-n}a_i) (a^{-r_i}g_i)$ and so, since $(a^{r_i-n}) F^{n-r_i}A \subset A$, we are done.
\end{proof}

\subsection{Filtered vector spaces}
Finally we give criteria to determine when two filtrations on a vector space are the same.
\begin{lemma}\label{equalityisom}
Suppose $A = k$ is a field and let $V$ be an $k$-vector space equipped with two discrete filtrations $G^iV \subset F^i V$. Then 
$$
\sum i \operatorname{dim}_k \operatorname{gr}^i_G(V) \leq \sum i \operatorname{dim}_k \operatorname{gr}^i_F(V)
$$
with equality if and only if $G = F$.
\end{lemma}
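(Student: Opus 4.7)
The plan is to rewrite the weighted sum $w_H(V) := \sum_i i \dim_k \operatorname{gr}^i_H(V)$ (for $H \in \lbrace F, G \rbrace$) as an unweighted sum of dimensions $\dim_k H^i V$, so that the comparison between $F$ and $G$ becomes a termwise consequence of the inclusion $G^i V \subset F^i V$.

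Concretely, I would first pick $N \gg 0$ large enough that $F^{-N}V = G^{-N}V = V$ (both filtrations are exhaustive by the definition of $\operatorname{Fil}(k)$) and $F^{N+1}V = G^{N+1}V = 0$ (the discreteness hypothesis). Summation by parts applied to
$$
w_H(V) \;=\; \sum_{i=-N}^{N} i\bigl(\dim_k H^i V - \dim_k H^{i+1}V\bigr)
$$
then telescopes to the identity
$$
w_H(V) \;=\; -N\dim_k V \;+\; \sum_{i=-N+1}^{N} \dim_k H^i V.
$$
This is the key reduction; the finite range means there is no convergence issue, and exhaustiveness is what makes the boundary term $-N\dim_k V$ appear symmetrically for both filtrations.

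Subtracting the two instances of this identity, the $-N\dim_k V$ terms cancel, and using $G^i V \subset F^i V$ I obtain
$$
w_F(V) - w_G(V) \;=\; \sum_{i=-N+1}^{N} \dim_k \bigl(F^i V / G^i V\bigr) \;\geq\; 0,
$$
which is the desired inequality. Equality forces every summand $\dim_k(F^i V/G^i V)$ to vanish, hence $F^i V = G^i V$ for every $i$ in the range; outside this range both filtrations agree trivially (being $0$ or all of $V$), giving $F = G$. I do not anticipate any serious obstacle here; the only point requiring care is tracking the indices in the Abel summation so that the ranges of summation match for $F$ and $G$.
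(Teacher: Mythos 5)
Your proof is correct and follows essentially the same route as the paper: both use Abel summation to convert $\sum i\,\dim_k\operatorname{gr}^i_H(V)$ into a sum of $\dim_k H^iV$ and then compare termwise via $G^iV\subset F^iV$. Your version is slightly more careful, since you make the truncation and the common boundary term $-N\dim_k V$ explicit, whereas the paper states the identity $\sum i\,\dim_k\operatorname{gr}^i_F(V)=\sum\dim_k F^iV$ informally, implicitly relying on cancellation of the constant tail when subtracting the two sides.
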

\begin{proof}
	Since $\operatorname{dim}_k \operatorname{gr}^i_F(V) = \operatorname{dim}_k F^i V - \operatorname{dim}_k F^{i+1} V$ we have
	$$
	\sum i \operatorname{dim}_k\operatorname{gr}^i_F(V) = \sum \operatorname{dim}_k F^iV
	$$
	Likewise when $F$ is replaced by the filtration $G$. As $G^i V \subset F^i V$, $\operatorname{dim}_k G^i V \leq \operatorname{dim}_k F^iV$; the desired inequality follows. This inequality is an equality if and only if $\operatorname{dim}_k G^iV = \operatorname{dim}_k F^iV$ for all $i$, i.e. if and only if $G = F$.
\end{proof}
\begin{notation}\label{filteredexact}
Say that a sequence of morphisms $M \rightarrow N \rightarrow P$ in $\operatorname{Fil}(A)$ is exact if it is exact as a sequence of $A$-modules and if $M \rightarrow N$ is strict. Lemma~\ref{fantasticstuff} implies that a sequence  $0 \rightarrow M \rightarrow N \rightarrow P \rightarrow 0$ in $\operatorname{Fil}(A)$ which is exact in the category of $A$-modules is exact in $\operatorname{Fil}(A)$ if and only if $0 \rightarrow \operatorname{gr}(M) \rightarrow \operatorname{gr}(N) \rightarrow \operatorname{gr}(P) \rightarrow 0$ is an exact sequence of $A$-modules.
\end{notation}

\begin{corollary}\label{exactsequencesandsums}
Suppose $A = k$ is a field and let $0 \rightarrow M \xrightarrow{f} N \xrightarrow{g} P \rightarrow 0$ be a sequence of finite dimensional discrete objects in $\operatorname{Fil}(k)$ which is exact in the category of $k$-vector spaces. If $f$ (respectively $g$) is strict then 
$$
\sum i \operatorname{dim}_k \operatorname{gr}^i(N) \leq \sum i \operatorname{dim}_k \operatorname{gr}^i(M) + \sum i \operatorname{dim}_k \operatorname{gr}^i(P) \quad \text{(respectively $\geq$)}
$$
Conversely if one of $f$ or $g$ is strict then equality implies the sequence is exact in $\operatorname{Fil}(k)$.
\end{corollary}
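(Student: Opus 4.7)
The plan is to reduce everything to Lemma~\ref{equalityisom} by comparing the given filtrations on $M$ and $P$ to the induced and quotient filtrations inherited from $N$. The only non-formal ingredient is that a tautological sequence $0 \to K \to N \to N/K \to 0$ with induced/quotient filtrations is strictly exact, so its $\operatorname{gr}$-invariants add; this is immediate from Notation~\ref{filteredexact} (or from Lemma~\ref{fantasticstuff}).

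First suppose $f$ is strict. Then $f$ identifies $M$ in $\operatorname{Fil}(k)$ with its image $K := f(M) = \ker(g)$ carrying the induced filtration from $N$, so $\sum i \dim_k \operatorname{gr}^i(M) = \sum i \dim_k \operatorname{gr}^i(K)$. The tautological sequence $0 \to K \to N \to N/K \to 0$ is strictly exact (giving $N/K$ the quotient filtration $F_q$), so
$$
\sum i \dim_k \operatorname{gr}^i(N) = \sum i \dim_k \operatorname{gr}^i(K) + \sum i \dim_k \operatorname{gr}^i_{F_q}(N/K).
$$
Under the $k$-linear isomorphism $N/K \xrightarrow{\sim} P$, the quotient filtration $F_q$ satisfies $F_q^i \subset F^i P$ because $g$ is a morphism in $\operatorname{Fil}(k)$; Lemma~\ref{equalityisom} then gives $\sum i \dim_k \operatorname{gr}^i_{F_q}(N/K) \leq \sum i \dim_k \operatorname{gr}^i(P)$, with equality precisely when $F_q = F$ on $P$, i.e.\ when $g$ is also strict. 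Combining yields the first asserted inequality and the equality criterion.

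Now suppose $g$ is strict. Then the quotient filtration on $N/\ker(g)$ coincides with the given filtration on $P$, so giving $\ker(g)$ the induced filtration from $N$ we obtain a strictly exact sequence, hence
$$
\sum i \dim_k \operatorname{gr}^i(\ker g) = \sum i \dim_k \operatorname{gr}^i(N) - \sum i \dim_k \operatorname{gr}^i(P).
$$
Under the $k$-linear isomorphism $M \xrightarrow{\sim} \ker(g)$, the given filtration on $M$ is contained in the induced filtration (again because $f$ is a morphism), and Lemma~\ref{equalityisom} gives $\sum i \dim_k \operatorname{gr}^i(M) \leq \sum i \dim_k \operatorname{gr}^i(\ker g)$, with equality iff $f$ is strict. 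Rearranging yields the second inequality and equality criterion.

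There is no serious obstacle: the whole argument is a bookkeeping exercise comparing two filtrations on $M$ or $P$ via Lemma~\ref{equalityisom}, once one observes that the ``induced/quotient'' sequence is automatically strictly exact. The only point that requires a moment of care is getting the directions of the filtration inclusions (and hence of the inequalities) correct, which is dictated by the fact that morphisms in $\operatorname{Fil}(k)$ preserve filtrations.
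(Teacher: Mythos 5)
Your proof is correct, and it is essentially the same argument as the paper's: both reduce to Lemma~\ref{equalityisom} applied to the comparison between the quotient filtration on $N/\ker(g)$ and the given filtration on $P$ (resp.\ the given filtration on $M$ and the induced filtration on $\ker(g)$), together with the observation that strictness of $f$ (resp.\ $g$) makes the corresponding graded sequence additive via Lemma~\ref{fantasticstuff}. The paper phrases the additivity step directly as an application of Lemma~\ref{fantasticstuff} to $f$, whereas you route it through the tautologically strict sequence $0 \to K \to N \to N/K \to 0$ and then identify $\operatorname{gr}(M)$ with $\operatorname{gr}(K)$; this is a cosmetic rearrangement of the same idea.
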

\begin{proof}
As $P$ is discrete we can apply Lemma~\ref{equalityisom} to deduce that
$$
\sum i  \operatorname{dim}_k\operatorname{gr}^i(N/M) \leq \sum i \operatorname{dim}_k\operatorname{gr}^i(P)
$$
with equality if and only if $g$ is strict. If $f$ is strict Lemma~\ref{fantasticstuff} tells us that $0 \rightarrow \operatorname{gr}(M) \rightarrow \operatorname{gr}(N) \rightarrow \operatorname{gr}(N/M) \rightarrow 0$ is exact, and so 
$$
\sum i \operatorname{dim}_k\operatorname{gr}^i(N) = \sum i \operatorname{dim}_k\operatorname{gr}^i(M) + \sum i \operatorname{dim}_k\operatorname{gr}^i(N/M)
$$
The lemma follows when we assume $f$ is strict. If $g$ is strict one argues similarly, applying Lemma~\ref{equalityisom} to the map $M \rightarrow \operatorname{ker}(g)$.
\end{proof}

\section{Breuil--Kisin modules}\label{breuil--kisin}

\subsection{Etale $\varphi$-modules}
First we recall the description of $G_{K_\infty}$-representations given by etale $\varphi$-modules.

\begin{definition}
	Let $\mathcal{O}_{C^\flat}$ be the inverse limit of the system 
	$$
	  \mathcal{O}_C/p \leftarrow  \mathcal{O}_C/p \leftarrow  \mathcal{O}_C/p  \leftarrow \ldots
	$$
	with transition maps $x \mapsto x^p$. This is a perfect integrally closed ring of characteristic $p$. There is a multiplicative identification $\mathcal{O}_{C^\flat} = \varprojlim \mathcal{O}_C$ (the limit again taken with respect to the transition maps $x \mapsto x^p$) given by 
	$$
	(\overline{x}_n)_n \mapsto \Big(\lim_{m \rightarrow \infty} x_{m+n}^{p^{m}}\Big)_n
	$$
	where $x_m \in \mathcal{O}_C$ is any lift of $\overline{x}_m$. We write $x \mapsto x^\sharp$ for the projection onto the first coordinate $\mathcal{O}_{C^\flat} \rightarrow \mathcal{O}_C$. Let $C^\flat$ denote the field of fractions of $\mathcal{O}_{C^\flat}$. The formula $v^\flat(x) = v_p(x^\sharp)$ defines a valuation on $C^\flat$ for which it is complete. The field $C^\flat$ is also algebraically closed. Further, the action of $G_K$ on $\mathcal{O}_C$ induces a continuous action of $G_K$ on $\mathcal{O}_{C^\flat}$ and $C^\flat$. 
	\end{definition}

\begin{notation}\label{firstrings}
Let $\mathfrak{S} = W(k)[[u]]$ and $A_{\operatorname{inf}} = W(\mathcal{O}_{C^\flat})$. Both rings are equipped with a $\mathbb{Z}_p$-linear endomorphism $\varphi$; on $A_{\operatorname{inf}}$ this is the usual Witt vector Frobenius and on $\mathfrak{S}$ it is given by $\sum a_iu^i \mapsto \sum \varphi(a_i)u^{ip}$. The system $\pi^{1/p^n}$ defines an element $\pi^\flat = (\pi,\pi^{1/p},\ldots) \in \mathcal{O}_{C^\flat}$ and we embed $\mathfrak{S} \rightarrow A_{\operatorname{inf}}$ by mapping $u \mapsto [\pi^\flat]$ (where $[\cdot]$ denotes the Teichmuller lifting). This embedding is compatible with $\varphi$. Let $\mathcal{O}_{\mathcal{E}}$ denote the $p$-adic completion of $\mathfrak{S}[\frac{1}{u}]$. Then $\varphi$ on $\mathfrak{S}$ extends to $\mathcal{O}_{\mathcal{E}}$ and the embedding $\mathfrak{S} \rightarrow A_{\operatorname{inf}}$ extends to a $\varphi$-equivariant embedding $\mathcal{O}_{\mathcal{E}} \rightarrow W(C^\flat)$.
\end{notation}

By functoriality there are $\varphi$-equivariant $G_K$-actions on $A_{\operatorname{inf}} = W(\mathcal{O}_{C^\flat})$ and $W(C^\flat)$ lifting those modulo~$p$.
\begin{definition}\label{etphimod}
An etale $\varphi$-module is a finitely generated $\mathcal{O}_{\mathcal{E}}$-module $M^{\operatorname{et}}$ equipped with an isomorphism
$$
\varphi_{M^{\operatorname{et}}}\colon M^{\operatorname{et}} \otimes_{\mathcal{O}_{\mathcal{E}},\varphi} \mathcal{O}_{\mathcal{E}} \xrightarrow{\sim} M^{\operatorname{et}}
$$
We may interpret $\varphi_{M^{\operatorname{et}}}$ as a $\varphi$-semilinear map $M^{\operatorname{et}} \rightarrow M^{\operatorname{et}}$ via $m \mapsto \varphi_{M^{\operatorname{et}}}(m \otimes 1)$. When there is no risk of confusion we shall write $\varphi$ in place of $\varphi_{M^{\operatorname{et}}}$. Let $\operatorname{Mod}^{\operatorname{et}}_K$ denote the abelian category of etale $\varphi$-modules.
\end{definition}
\begin{construction}
Since the action of $G_{K_\infty}$ on $C^\flat$ fixes $\pi^\flat$ the $\mathbb{Z}_p$-module
$$
T(M^{\operatorname{et}}) = (M^{\operatorname{et}} \otimes_{\mathcal{O}_{\mathcal{E}}} W(C^\flat))^{\varphi =1}
$$
admits a $\mathbb{Z}_p$-linear action of $G_{K_\infty}$ (given by the trivial action on $M^{\operatorname{et}}$ and natural $G_{K_\infty}$-action on $W(C^\flat)$). This describes a functor from $\operatorname{Mod}^{\operatorname{et}}_K$ to the category of finitely generated $\mathbb{Z}_p$-modules equipped with a continuous $\mathbb{Z}_p$-linear $G_{K_\infty}$-action.
\end{construction}

\begin{proposition}[Fontaine]\label{font}
	The functor $M^{\operatorname{et}} \mapsto T(M^{\operatorname{et}})$ is an exact equivalence of categories. The representation $T(M^{\operatorname{et}})$ is determined up to isomorphism by the existence of a $\varphi,G_{K_\infty}$-equivariant identification
	$$
	M^{\operatorname{et}} \otimes_{\mathcal{O}_{\mathcal{E}}} W(C^\flat) = T(M^{\operatorname{et}}) \otimes_{\mathbb{Z}_p} W(C^\flat)
	$$
\end{proposition}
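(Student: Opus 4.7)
The plan is to exhibit a quasi-inverse functor and verify its properties by faithfully flat Galois descent. Set
$$
D(T) = (T \otimes_{\mathbb{Z}_p} W(C^\flat))^{G_{K_\infty}},
$$
with $\varphi$-action inherited from $W(C^\flat)$, and organise the verification by reducing first to the case where $T$ (equivalently $M^{\operatorname{et}}$) is killed by some power of $p$, and then by d\'evissage to the case killed by $p$ alone.

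The essential input is the norm field construction of Fontaine--Wintenberger, which provides a $\varphi$-equivariant embedding of the $p$-adic completion $\widehat{\mathcal{O}_\mathcal{E}^{\operatorname{ur}}}$ of the ring of integers of the maximal unramified extension of $\mathcal{O}_\mathcal{E}[\tfrac{1}{p}]$ into $W(C^\flat)$. Under this embedding, the $G_{K_\infty}$-action on $W(C^\flat)$ preserves $\widehat{\mathcal{O}_\mathcal{E}^{\operatorname{ur}}}$ and the resulting action identifies $G_{K_\infty}$ with $\operatorname{Gal}(\widehat{\mathcal{O}_\mathcal{E}^{\operatorname{ur}}}/\mathcal{O}_\mathcal{E})$, via the Fontaine--Wintenberger identification of $G_{K_\infty}$ with the absolute Galois group of the local field $k((u))$ (which has $\mathcal{O}_\mathcal{E}$ as a Cohen ring). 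The extension $\mathcal{O}_\mathcal{E} \to \widehat{\mathcal{O}_\mathcal{E}^{\operatorname{ur}}}$ is faithfully flat, and one has $W(C^\flat)^{\varphi=1} = \mathbb{Z}_p$ and $\widehat{\mathcal{O}_\mathcal{E}^{\operatorname{ur}}}^{G_{K_\infty}} = \mathcal{O}_\mathcal{E}$.

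Granting these inputs, the proof is formal. First, one checks $D(T) = (T \otimes_{\mathbb{Z}_p} \widehat{\mathcal{O}_\mathcal{E}^{\operatorname{ur}}})^{G_{K_\infty}}$: in the mod~$p$ case this reduces to the observation that a finite continuous $\mathbb{F}_p$-representation of $G_{K_\infty}$ becomes trivial over $k((u))^{\operatorname{sep}} \subset C^\flat$, so no Galois-fixed vector in $T \otimes_{\mathbb{F}_p} C^\flat$ can fail to lie in $T \otimes_{\mathbb{F}_p} k((u))^{\operatorname{sep}}$. By standard Galois descent the natural map $D(T) \otimes_{\mathcal{O}_\mathcal{E}} \widehat{\mathcal{O}_\mathcal{E}^{\operatorname{ur}}} \to T \otimes_{\mathbb{Z}_p} \widehat{\mathcal{O}_\mathcal{E}^{\operatorname{ur}}}$ is then an isomorphism. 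From this one reads off that $D(T)$ is finitely generated over $\mathcal{O}_\mathcal{E}$ and that its $\varphi$ is etale (both properties being fpqc-local and holding trivially after base change to $\widehat{\mathcal{O}_\mathcal{E}^{\operatorname{ur}}}$). Dually, for an etale $\varphi$-module $M^{\operatorname{et}}$ one trivialises $M^{\operatorname{et}} \otimes_{\mathcal{O}_\mathcal{E}} \widehat{\mathcal{O}_\mathcal{E}^{\operatorname{ur}}}$ by a $\varphi$-invariant basis, whose $\mathbb{Z}_p$-span realises $T(M^{\operatorname{et}})$ and supplies the canonical identification $M^{\operatorname{et}} \otimes_{\mathcal{O}_\mathcal{E}} W(C^\flat) = T(M^{\operatorname{et}}) \otimes_{\mathbb{Z}_p} W(C^\flat)$ asserted in the statement. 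The two compositions $T \to T(D(T))$ and $M^{\operatorname{et}} \to D(T(M^{\operatorname{et}}))$ are then isomorphisms, using respectively $W(C^\flat)^{\varphi=1} = \mathbb{Z}_p$ and $\widehat{\mathcal{O}_\mathcal{E}^{\operatorname{ur}}}^{G_{K_\infty}} = \mathcal{O}_\mathcal{E}$. Exactness of $T$ is automatic since tensoring with the faithfully flat ring $\widehat{\mathcal{O}_\mathcal{E}^{\operatorname{ur}}}$ both preserves and reflects exactness.

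The only genuinely deep input, and hence the main obstacle, is the Fontaine--Wintenberger identification of $G_{K_\infty}$ with the absolute Galois group of the norm field $k((u))$ together with the construction of $\widehat{\mathcal{O}_\mathcal{E}^{\operatorname{ur}}} \hookrightarrow W(C^\flat)$ and the verification of its Galois-theoretic properties; everything else is routine descent. In practice I would quote this directly rather than reprove it.
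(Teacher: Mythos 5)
There is a genuine gap in your definition of the quasi-inverse functor, and the false step propagates to your claim that $D(T) = (T \otimes_{\mathbb{Z}_p} \widehat{\mathcal{O}_\mathcal{E}^{\operatorname{ur}}})^{G_{K_\infty}}$.

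You set $D(T) = (T \otimes_{\mathbb{Z}_p} W(C^\flat))^{G_{K_\infty}}$, and then assert that this coincides with $(T \otimes_{\mathbb{Z}_p} \widehat{\mathcal{O}_\mathcal{E}^{\operatorname{ur}}})^{G_{K_\infty}}$, justifying the mod-$p$ case by the remark that a Galois-fixed vector in $T \otimes_{\mathbb{F}_p} C^\flat$ must already lie in $T \otimes_{\mathbb{F}_p} k((u))^{\operatorname{sep}}$. This is not true. Already for $T = \mathbb{F}_p$ the left side is $(C^\flat)^{G_{K_\infty}}$, which by Ax--Sen--Tate is the tilt $(\widehat{K_\infty})^\flat$, the completed perfection of $k((u))$ --- an enormous, non-noetherian ring --- whereas $(k((u))^{\operatorname{sep}})^{G_{K_\infty}} = k((u))$. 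So your $D(T)$ is a module over $W((\widehat{K_\infty})^\flat)$, is of infinite rank over $\mathcal{O}_\mathcal{E}$, and cannot serve as a quasi-inverse. The issue is that $C^\flat$ is much larger than $k((u))^{\operatorname{sep}}$ (it contains the Galois closure \emph{after} perfection and completion), and the Galois-invariants see all of this extra room. Fontaine's quasi-inverse must be defined directly as $(T \otimes_{\mathbb{Z}_p} \widehat{\mathcal{O}_\mathcal{E}^{\operatorname{ur}}})^{G_{K_\infty}}$ with $\widehat{\mathcal{O}_\mathcal{E}^{\operatorname{ur}}}$ (which has the correct, small Galois-invariants $\mathcal{O}_\mathcal{E}$), not via $W(C^\flat)$.

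There is a second, related omission: you say a $\varphi$-invariant basis of $M^{\operatorname{et}} \otimes_{\mathcal{O}_\mathcal{E}} \widehat{\mathcal{O}_\mathcal{E}^{\operatorname{ur}}}$ has $\mathbb{Z}_p$-span ``realising $T(M^{\operatorname{et}})$.'' But in the paper $T(M^{\operatorname{et}})$ is defined as $(M^{\operatorname{et}} \otimes_{\mathcal{O}_\mathcal{E}} W(C^\flat))^{\varphi=1}$. That this agrees with $(M^{\operatorname{et}} \otimes_{\mathcal{O}_\mathcal{E}} \widehat{\mathcal{O}_\mathcal{E}^{\operatorname{ur}}})^{\varphi=1}$ is precisely the nontrivial comparison the paper carries out explicitly at the end of its proof (by extending scalars to $W(C^\flat)$ and using $W(C^\flat)^{\varphi=1} = \mathbb{Z}_p$). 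You are entitled to the containment $\subset$, but not for free to the reverse inclusion, and a proof must address it.

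Setting aside these two points, the overall architecture is sound and is close in spirit to the classical Fontaine argument; you also prefer the Fontaine--Wintenberger norm field language where the paper invokes Scholze's perfectoid theory for the identification $G_{K_\infty} \cong G_{k((u))}$, which is an equivalent route. The paper itself does less: it quotes Fontaine's \cite[Proposition 1.2.6]{Fon00} wholesale for the equivalence using the functor $T'$ built from $\widehat{\mathcal{O}_\mathcal{E}^{\operatorname{ur}}}$, and only proves the comparison $T = T'$ over $W(C^\flat)$. If you want to reprove the equivalence, replace your $D$ by $(T \otimes_{\mathbb{Z}_p} \widehat{\mathcal{O}_\mathcal{E}^{\operatorname{ur}}})^{G_{K_\infty}}$ throughout and add the $\varphi$-invariants comparison explicitly.
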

\begin{proof}
The embedding $\mathcal{O}_{\mathcal{E}} \rightarrow W(C^\flat)$ reduces modulo~$p$ to an inclusion of $k((u))$ in $C^\flat$. The completion of $K_\infty$ is a perfectoid field in the sense of \cite{Sch12}, whose tilt is the completed perfection of $k((u)) \subset C^\flat$.  It follows from \cite[Theorem 3.7]{Sch12} that the action of $G_{K_\infty}$ on $C^\flat$ identifies $G_K = G_{k((u))}$. Let $\mathcal{O}_{\widehat{\mathcal{E}^{\operatorname{ur}}}}$ be the $p$-adic completion of the Cohen ring (i.e. the discrete valuation ring of characteristic zero with uniformizer $p$) with residue field $k((u))^{\operatorname{sep}}$. Then $\mathcal{O}_{\widehat{\mathcal{E}^{\operatorname{ur}}}}$ may be identified as a subring of $W(C^\flat)$ stable under the action of $G_{K_\infty}$ and $\varphi$. The proposition with $T(M^{\operatorname{et}})$ replaced by $T'(M^{\operatorname{et}}) := (M^{\operatorname{et}} \otimes_{\mathcal{O}_{\mathcal{E}}} \mathcal{O}_{\widehat{\mathcal{E}^{\operatorname{ur}}}})^{\varphi =1}$ follows from \cite[Proposition 1.2.6]{Fon00} applied with $E = k((u))$. It therefore suffices to show the inclusion $T'(M^{\operatorname{et}}) \subset T(M^{\operatorname{et}})$ is an equality. Since we know there are $\varphi$-equivariant identifications
$$
M^{\operatorname{et}} \otimes_{\mathcal{O}_{\mathcal{E}}} W(C^\flat) = T'(M^{\operatorname{et}}) \otimes_{\mathbb{Z}_p} W(C^\flat)
$$
the equality follows by taking $\varphi$-invariants.
\end{proof}

\subsection{Breuil--Kisin modules}
Breuil--Kisin modules appear as special $\mathfrak{S}$-lattices inside etale $\varphi$-modules.

\begin{definition}
	A Breuil--Kisin module is a finitely generated $\mathfrak{S}$-module $M$ equipped with an isomorphism
	$$
	\varphi_M\colon M \otimes_{\mathfrak{S},\varphi} \mathfrak{S}[\tfrac{1}{E}] \xrightarrow{\sim} M[\tfrac{1}{E}]
	$$
	Here $E(u) \in \mathfrak{S}$ denotes the minimal polynomial of $\pi$ over $K_0$. We may interpret $\varphi_M$ as a $\varphi$-semilinear map $M \mapsto M[\frac{1}{E}]$ via $m \mapsto \varphi_M(m \otimes 1)$. When there is no risk of confusion we write $\varphi$ in place of $\varphi_M$. Let $\operatorname{Mod}^{\operatorname{BK}}_K$ denote the abelian category of Breuil--Kisin modules.
\end{definition}
\begin{notation}\label{M^phi}
If $M \in \operatorname{Mod}^{\operatorname{BK}}_K$ we write $M^\varphi \subset M[\frac{1}{E}]$ for the image of 
$$
M \rightarrow M \otimes_{\varphi,\mathfrak{S}} \mathfrak{S}[\tfrac{1}{E}] \xrightarrow{\varphi_M} M[\tfrac{1}{E}]
$$ 
More generally we use this notation whenever $A$ is any ring equipped with a Frobenius $\varphi$ and $M$ is an $A$-module equipped with a map $\varphi_M: M \otimes_{\varphi,A} A[\tfrac{1}{a}] \rightarrow M[\tfrac{1}{a}]$ for some $a \in A$. Then $M^\varphi := \varphi_M(M \otimes 1) \subset M[\frac{1}{a}]$.
\end{notation}

\begin{construction}
Note $E(u)$ is a unit in $\mathcal{O}_{\mathcal{E}}$. Thus if $M \in \operatorname{Mod}^{\operatorname{BK}}_K$ then $M \otimes_{\mathfrak{S}} \mathcal{O}_{\mathcal{E}}$ is an etale $\varphi$-module and
$$
T(M) := T(M \otimes_{\mathfrak{S}} \mathcal{O}_{\mathcal{E}}) = (M \otimes_{\mathfrak{S}} W(C^\flat))^{\varphi =1}
$$
defines a functor from $\operatorname{Mod}^{\operatorname{BK}}_K$ to the category of continuous $G_{K_\infty}$-representations on finitely generated $\mathbb{Z}_p$-modules. Since $\mathfrak{S} \rightarrow \mathcal{O}_{\mathcal{E}}$ is flat Proposition~\ref{font} implies $M \mapsto T(M)$ is exact on $\operatorname{Mod}^{\operatorname{BK}}_K$.
\end{construction}

\begin{remark}\label{kisinff}
Kisin \cite[Proposition 2.1.12]{Kis06} has shown $M \mapsto T(M)$ is fully faithful when restricted to Breuil--Kisin modules which are free over $\mathfrak{S}$. However if one does not restrict to Breuil--Kisin modules which are free over $\mathfrak{S}$ then this is not true.
\end{remark}
\begin{construction}\label{internalhom}
For $M,N \in \operatorname{Mod}^{\operatorname{BK}}_K$ the $\mathfrak{S}$-module
$$
\operatorname{Hom}(M,N) := \operatorname{Hom}_{\mathfrak{S} }(M,N)
$$
of $\mathfrak{S}$-linear homomorphisms $M \rightarrow N$ is made into an object of $\operatorname{Mod}^{\operatorname{BK}}_K(\mathcal{O})$ as follows. Since $\varphi \colon \mathfrak{S} \rightarrow \mathfrak{S}$ is flat the natural map $\operatorname{Hom}_{\mathfrak{S}}(M,N)^{\mathcal{O}} \otimes_{\varphi} \mathfrak{S}[\frac{1}{E}] \rightarrow \operatorname{Hom}_{\mathfrak{S}[\frac{1}{E}]}(M \otimes_{\varphi} \mathfrak{S}[\frac{1}{E}],N \otimes_{\varphi} \mathfrak{S}[\frac{1}{E}])$ is an isomorphism. Similarly the natural map $\operatorname{Hom}_{\mathfrak{S}}(M,N)[\frac{1}{E}] \rightarrow \operatorname{Hom}_{\mathfrak{S}[\frac{1}{E}]}(M[\frac{1}{E}],N[\frac{1}{E}])$ is an isomorphism. As such the isomorphism 
$$
\operatorname{Hom}_{\mathfrak{S}[\frac{1}{E}]}(M \otimes_{\varphi} \mathfrak{S}[\tfrac{1}{E}],N \otimes_{\varphi} \mathfrak{S}[\tfrac{1}{E}]) \rightarrow \operatorname{Hom}_{\mathfrak{S}[\frac{1}{E}]}(M[\tfrac{1}{E}],N[\tfrac{1}{E}])
$$
given by $f \mapsto \varphi_N \circ f \circ \varphi_M^{-1}$ makes $\operatorname{Hom}(M,N)$ into a Breuil--Kisin module. Note that
$$
T(\operatorname{Hom}(M,N)) = \operatorname{Hom}_{\mathbb{Z}_p}(T(M),T(N))
$$
as $G_{K_\infty}$-representations, where the $G_{K_\infty}$-action on the right is via $\sigma(f) = \sigma \circ f \circ \sigma^{-1}$.
\end{construction} 

\subsection{Coefficients}
In practice we are interested in representations valued in extensions of $\mathbb{Z}_p$. For this reason we introduce a variant of $\operatorname{Mod}^{\operatorname{BK}}_K$. 

\begin{definition}
Recall the $\mathbb{Z}_p$-algebra $\mathcal{O}$ defined in Subsection~\ref{notationpart}. A Breuil--Kisin module with $\mathcal{O}$-action is a pair $(M,\iota)$ where $M \in \operatorname{Mod}^{\operatorname{BK}}_K$ and $\iota$ is a $\mathbb{Z}_p$-algebra homomorphism $\iota\colon \mathcal{O} \rightarrow \operatorname{End}_{\operatorname{BK}}(M)$. Equivalently a Breuil--Kisin module with $\mathcal{O}$-action is an $\mathfrak{S}_{\mathcal{O}} = \mathfrak{S} \otimes_{\mathbb{Z}_p} \mathcal{O}$-module $M$ equipped with an isomorphism
$$
M \otimes_{\varphi,\mathfrak{S}_{\mathcal{O}}} \mathfrak{S}_{\mathcal{O}}[\tfrac{1}{E}] \xrightarrow{\sim} M[\tfrac{1}{E}]
$$
Here $\varphi$ on $\mathfrak{S}_{\mathcal{O}}$ denotes the $\mathcal{O}$-linear extension of $\varphi$ on $\mathfrak{S}$. Let $\operatorname{Mod}^{\operatorname{BK}}_K(\mathcal{O})$ denote the category of Breuil--Kisin modules with $\mathcal{O}$-action.
\end{definition}

\begin{remark}
By functoriality $M \mapsto T(M)$ induces an exact functor from $\operatorname{Mod}^{\operatorname{BK}}_K(\mathcal{O})$ into the category of continuous representations of $G_{K_\infty}$ on finitely generated $\mathcal{O}$-modules.
\end{remark}

\begin{construction}\label{Ohom}
Let $M,N \in \operatorname{Mod}^{\operatorname{BK}}_K(\mathcal{O})$. Then
$$
\operatorname{Hom}(M,N)^{\mathcal{O}}:= \operatorname{Hom}_{\mathfrak{S} \otimes_{\mathbb{Z}_p} \mathcal{O}}(M,N)
$$
is made into an object of $\operatorname{Mod}^{\operatorname{BK}}_K(\mathcal{O})$ as in Construction~\ref{internalhom}. Again we have
$$
T(\operatorname{Hom}(M,N)^{\mathcal{O}}) = \operatorname{Hom}_{\mathcal{O}}(T(M),T(N))
$$
as $G_{K_\infty}$-representations.
\end{construction}

\begin{construction}\label{O-semilinear}
	The embedding $\mathcal{O}[u] \rightarrow \mathfrak{S} \otimes_{\mathbb{Z}_p} \mathcal{O}$ given by $\sum a_iu^i \mapsto \sum u^i \otimes a_i$ extends by continuity to an embedding $\mathcal{O}[[u]] \rightarrow \mathfrak{S} \otimes_{\mathbb{Z}_p} \mathcal{O}$. Recall that $K_0 \subset E$ by assumption so that the map
	$$
	(\sum a_i u^i) \otimes b \mapsto (\sum \tau(a_i)bu^i)_\tau
	$$
	describes an isomorphism of $\mathcal{O}[[u]]$-algebras $\mathfrak{S} \otimes_{\mathbb{Z}_p} \mathcal{O} \rightarrow \prod_{\tau} \mathcal{O}[[u]]$, the product running over $\tau \in \operatorname{Hom}_{\mathbb{F}_p}(k,\mathbb{F})$ (we abusively write $\tau$ also for its extension to an embedding $\tau\colon W(k) \rightarrow \mathcal{O}$). Let $\widetilde{e}_\tau \in \mathfrak{S} \otimes_{\mathbb{Z}_p} \mathcal{O}$ be the idempotent corresponding to $\tau$. As $\widetilde{e}_\tau$ is determined by the property $(a \otimes 1)\widetilde{e}_\tau = (1 \otimes \tau(a))\widetilde{e}_\tau$ for $a \in W(k)$, the map $\varphi \otimes 1$ sends
	$$
	\widetilde{e}_{\tau \circ \varphi} \mapsto \widetilde{e}_\tau
	$$
	If $M \in \operatorname{Mod}^{\operatorname{BK}}_K(\mathcal{O})$ we set $M_\tau = \widetilde{e}_\tau M$ which we view as an $\mathcal{O}[[u]]$-algebra. By the above $\varphi_M$ restricts to a map
	\begin{equation}\label{Olalal}
	M_{\tau \circ \varphi} \otimes_{\varphi,\mathcal{O}[[u]]} \mathcal{O}[[u]] \rightarrow M_\tau[\tfrac{1}{\tau(E)}]
	\end{equation}
	which becomes an isomorphism after inverting $\tau(E)$. Here $\varphi$ on $\mathcal{O}[[u]]$ is that induced by $\varphi \otimes 1$ on $\mathfrak{S} \otimes_{\mathbb{Z}_p} \mathcal{O}$, i.e. is given by $\sum a_i u^i \mapsto \sum a_iu^{ip}$.
\end{construction}

\begin{corollary}\label{freethings}
	\begin{enumerate}
		\item If $M \in \operatorname{Mod}^{\operatorname{BK}}_K(\mathcal{O})$ is free as an $\mathfrak{S}$-module then it is free as an $\mathfrak{S} \otimes_{\mathbb{Z}_p} \mathcal{O}$-module. 
		\item Let $\varpi \in \mathcal{O}$ be a uniformiser and suppose $M \in \operatorname{Mod}^{\operatorname{BK}}_K(\mathcal{O})$ is $\varpi$-torsion. If $M$ is free as an $\mathfrak{S} / p = k[[u]]$-module then it is free as a module over $k[[u]] \otimes_{\mathbb{F}_p} \mathbb{F}$.
	\end{enumerate}	
\end{corollary}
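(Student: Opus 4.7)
The plan is to exploit the decomposition $\mathfrak{S}\otimes_{\mathbb{Z}_p}\mathcal{O}\cong\prod_\tau\mathcal{O}[[u]]$ of Construction~\ref{O-semilinear} together with the idempotents $\widetilde{e}_\tau$. Since these idempotents lie in the commutative ring $\mathfrak{S}\otimes_{\mathbb{Z}_p}\mathcal{O}$, multiplication by them is in particular $\mathfrak{S}$-linear, so they produce an $\mathfrak{S}$-linear decomposition $M=\bigoplus_\tau M_\tau$ with $M_\tau=\widetilde{e}_\tau M$ a finitely generated $\mathcal{O}[[u]]$-module; it suffices to show each $M_\tau$ is free over $\mathcal{O}[[u]]$. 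For part~(2) the analogous decomposition $k[[u]]\otimes_{\mathbb{F}_p}\mathbb{F}\cong\prod_\tau\mathbb{F}[[u]]$ makes each $M_\tau$ a finitely generated $\mathbb{F}[[u]]$-module which is $u$-torsion-free, because $M$ is $k[[u]]$-free and $M_\tau\subset M$; since $\mathbb{F}[[u]]$ is a DVR this already forces $M_\tau$ to be $\mathbb{F}[[u]]$-free.

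Part~(1) requires more work, essentially because $\mathcal{O}[[u]]$ is a two-dimensional regular local ring rather than a DVR. I first want to establish that each $M_\tau$ is $\mathfrak{S}$-free: it is an $\mathfrak{S}$-linear direct summand of the $\mathfrak{S}$-free module $M$, hence $\mathfrak{S}$-projective and so, since $\mathfrak{S}$ is local Noetherian, free of some rank $r_\tau$. Reducing modulo the $\mathfrak{S}$-regular element $u$, the module $M_\tau/uM_\tau$ is free of rank $r_\tau$ over $W(k)=\mathfrak{S}/u$ and therefore $p$-torsion-free; since $\varpi^e\in p\cdot\mathcal{O}^\times$ it is also $\varpi$-torsion-free. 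As a finitely generated torsion-free module over the DVR $\mathcal{O}$ it is then $\mathcal{O}$-free of some rank $r'_\tau$, where comparing $W(k)$-ranks gives $r_\tau=r'_\tau\cdot[\mathcal{O}:W(k)]$.

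Lifting an $\mathcal{O}$-basis of $M_\tau/uM_\tau$ to elements $e_1,\ldots,e_{r'_\tau}\in M_\tau$, their images span $M_\tau$ modulo the maximal ideal $(\varpi,u)$ of $\mathcal{O}[[u]]$, so Nakayama's lemma yields a surjection $\phi\colon\mathcal{O}[[u]]^{r'_\tau}\twoheadrightarrow M_\tau$. Because $\mathcal{O}[[u]]\cong\mathfrak{S}\otimes_{W(k),\tau}\mathcal{O}$ is free over $\mathfrak{S}$ of rank $[\mathcal{O}:W(k)]$, both the source and target of $\phi$ are $\mathfrak{S}$-free of the same finite rank $r_\tau$; a surjection between two finitely generated free modules of equal rank over any commutative ring is an isomorphism (by splitting and the determinant argument), so $\phi$ is an isomorphism and $M_\tau$ is $\mathcal{O}[[u]]$-free. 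The main obstacle is precisely the two-dimensional nature of $\mathcal{O}[[u]]$: torsion-freeness of $M_\tau$ alone is not enough for freeness, and one cannot descend $\mathfrak{S}$-flatness to $\mathfrak{S}\otimes_{\mathbb{Z}_p}\mathcal{O}$-flatness directly. The argument circumvents this by reducing modulo $u$ (where a clean DVR argument applies over $\mathcal{O}$) to extract a generating family of the correct size, and then uses the auxiliary $\mathfrak{S}$-freeness to promote the resulting surjection into an isomorphism.
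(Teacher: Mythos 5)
Your argument that each $M_\tau$ is free over $\mathcal{O}[[u]]$ (respectively over $\mathbb{F}[[u]]$ in part (2)) is correct, and your reduction modulo $u$ is a clean way to fill in a step the paper leaves implicit. However, there is a genuine gap at the very start of your argument: you assert that ``it suffices to show each $M_\tau$ is free over $\mathcal{O}[[u]]$'', and this is false. A module over $\mathfrak{S}\otimes_{\mathbb{Z}_p}\mathcal{O}\cong\prod_\tau\mathcal{O}[[u]]$ is free as a module over the product ring if and only if each $M_\tau$ is free over $\mathcal{O}[[u]]$ \emph{and all of these ranks coincide}: a free $\prod_\tau\mathcal{O}[[u]]$-module of rank $n$ decomposes as $\bigoplus_\tau\mathcal{O}[[u]]^n$. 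Without equality of ranks the conclusion fails, and nothing in your argument (which never invokes the $\varphi$-structure) rules out, say, $M=\mathcal{O}[[u]]^2\times\mathcal{O}[[u]]$ over two embeddings; this is $\mathfrak{S}$-free but not $\mathfrak{S}\otimes_{\mathbb{Z}_p}\mathcal{O}$-free.

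This is exactly where the paper's proof uses the Breuil--Kisin structure: equation \eqref{Olalal} furnishes a $\varphi$-semilinear map $M_{\tau\circ\varphi}\otimes_{\varphi,\mathcal{O}[[u]]}\mathcal{O}[[u]]\to M_\tau[\tfrac{1}{\tau(E)}]$ that becomes an isomorphism after inverting $\tau(E)$, which forces $\operatorname{rank}_{\mathcal{O}[[u]]}M_{\tau\circ\varphi}=\operatorname{rank}_{\mathcal{O}[[u]]}M_\tau$; since $\tau\mapsto\tau\circ\varphi$ is a single cycle on $\operatorname{Hom}_{\mathbb{F}_p}(k,\mathbb{F})$, all the ranks are equal. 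You need to add this observation (and its analogue in characteristic $p$ for part (2)) to complete the proof. As a side remark, your promotion of the Nakayama surjection to an isomorphism via $\mathfrak{S}$-rank counting is a valid and slightly more elementary alternative to the Auslander--Buchsbaum/depth argument one might otherwise reach for; it is a nice way to justify the paper's unexplained assertion that $\mathfrak{S}$-freeness of $M_\tau$ implies $\mathcal{O}[[u]]$-freeness.
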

\begin{proof}
If $M$ is free over $\mathfrak{S}$ then each $M_\tau$ is free over $\mathcal{O}[[u]]$. By \eqref{Olalal} the rank of $M_\tau$ over $\mathcal{O}[[u]]$ does not depend on $\tau$ so $M = \prod_\tau M_\tau$ is free over $\mathfrak{S} \otimes_{\mathbb{Z}_p} \mathcal{O}$. (2) follows similarly.
\end{proof}

\section{Strongly divisibility}\label{strong-divisible}
\subsection{Torsion Breuil--Kisin modules}

\begin{definition}
	Denote by $\operatorname{Mod}^{\operatorname{BK}}_{k} \subset \operatorname{Mod}^{\operatorname{BK}}_K$ the full subcategory whose objects are modules which are free over $\mathfrak{S}/ p = k[[u]]$. 
\end{definition}

\begin{remark}\label{ptorsioniseasy}
	An $M \in \operatorname{Mod}^{\operatorname{BK}}_k$ is the same thing as a $k[[u]]$-lattice inside an etale $\varphi$-module over $\mathcal{O}_{\mathcal{E}}/p = k((u))$ because $E(u) \equiv u^e$ modulo~$p$.\footnote{ In particular there are many $p$-torsion Breuil--Kisin modules giving rise to the same etale $\varphi$-module. This is in contrast to the integral situation, see Remark~\ref{kisinff}.}
\end{remark}

\begin{lemma}\label{ptorsionexactness}
	The functor $M \mapsto T(M)$ restricts to an essentially surjective functor from $\operatorname{Mod}^{\operatorname{BK}}_{k}$ to the category of continuous representations of $G_{K_\infty}$ on finite dimensional $\mathbb{F}_p$-vector spaces. If $M \in \operatorname{Mod}^{\operatorname{BK}}_{k}$ and 
	$$
	0 \rightarrow T_1 \rightarrow T(M) \rightarrow T_2 \rightarrow 0
	$$
	is an exact sequence of $G_{K_\infty}$-representations then there exists a unique exact sequence 
	$$
	0 \rightarrow M_1 \rightarrow M \rightarrow M_2 \rightarrow 0
	$$
	in $\operatorname{Mod}^{\operatorname{BK}}_{k}$ such that $T(M_i) = T_i$.
\end{lemma}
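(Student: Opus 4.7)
My plan is to reduce to Fontaine's equivalence (Proposition~\ref{font}) for etale $\varphi$-modules and then analyse $k[[u]]$-lattices inside them. Observe that since $E(u) \equiv u^e$ modulo $p$, a Breuil--Kisin module killed by $p$ is the same as a $k[[u]]$-lattice $M$ inside an etale $\varphi$-module over $k((u)) = \mathcal{O}_{\mathcal{E}}/p$, such that $M$ is free over $k[[u]]$ (cf. Remark~\ref{ptorsioniseasy}). This reformulation will do most of the work.

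For essential surjectivity, start with a continuous representation $T$ of $G_{K_\infty}$ on a finite dimensional $\mathbb{F}_p$-vector space. By Proposition~\ref{font} there is an etale $\varphi$-module $M^{\operatorname{et}}$ over $k((u))$ with $T(M^{\operatorname{et}}) \cong T$; since $k((u))$ is a field this is automatically a finite dimensional $k((u))$-vector space, so we can choose any $k[[u]]$-lattice $M \subset M^{\operatorname{et}}$. The induced map $M \otimes_{k[[u]],\varphi} k[[u]][\tfrac{1}{u}] \to M[\tfrac{1}{u}]$ is identified with $\varphi_{M^{\operatorname{et}}}$ and so is an isomorphism, making $M$ an object of $\operatorname{Mod}^{\operatorname{BK}}_k$ with $T(M) = T$.

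For the second part, apply Fontaine's equivalence to $0 \to T_1 \to T(M) \to T_2 \to 0$ to obtain a short exact sequence of etale $\varphi$-modules $0 \to M_1^{\operatorname{et}} \to M[\tfrac{1}{u}] \to M_2^{\operatorname{et}} \to 0$. Set $M_1 = M \cap M_1^{\operatorname{et}}$ (intersection in $M[\tfrac{1}{u}]$) and $M_2 = M/M_1$. As $k[[u]]$ is a PID, $M_1$ is free being a submodule of the free module $M$. The induced injection $M_2 \hookrightarrow M_2^{\operatorname{et}}$ shows $M_2$ is $k[[u]]$-torsion-free and hence also free. The $\varphi$-structure on $M^{\operatorname{et}}$ preserves both $M_1^{\operatorname{et}}$ and $M_2^{\operatorname{et}}$, and since $M_i[\tfrac{1}{u}] = M_i^{\operatorname{et}}$, this induces the Breuil--Kisin structures. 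By construction the sequence is short exact in $\operatorname{Mod}^{\operatorname{BK}}_k$ and satisfies $T(M_i) = T_i$.

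For uniqueness, suppose $0 \to M_1' \to M \to M_2' \to 0$ is another such sequence in $\operatorname{Mod}^{\operatorname{BK}}_k$. By the functoriality of Fontaine's equivalence, $M_1'[\tfrac{1}{u}]$ coincides with $M_1^{\operatorname{et}}$ as a sub-$\varphi$-module of $M[\tfrac{1}{u}]$, hence $M_1' \subset M \cap M_1^{\operatorname{et}} = M_1$. Conversely, since $M/M_1'= M_2'$ is free over $k[[u]]$, the submodule $M_1'$ is saturated in $M$, giving $M_1' = M \cap M_1'[\tfrac{1}{u}] = M_1$. The main subtlety I anticipate is verifying that $M_2$ remains free over $k[[u]]$; this is handled precisely by defining $M_1$ as a saturation (i.e.\ intersection with $M_1^{\operatorname{et}}$) rather than just any lift, and the same saturation property simultaneously forces uniqueness.
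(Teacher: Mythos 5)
Your proof is correct and follows essentially the same route as the paper: use Remark~\ref{ptorsioniseasy} to reduce everything to the classification of $k[[u]]$-lattices in etale $\varphi$-modules via Fontaine's equivalence, obtain $M_1$ as the saturation $M \cap M_1^{\operatorname{et}}$, and observe that freeness of $M_2$ forces this choice, which also yields uniqueness. You spell out the uniqueness argument in slightly more detail than the paper (which condenses it into the remark that "Since $M_2$ is torsion-free we must have $M_1 = M \cap M_1^{\operatorname{et}}$"), but the mechanism is identical.
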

\begin{proof}
	If $T$ is an $\mathbb{F}_p$-representation of $G_{K_\infty}$ then there exists a $p$-torsion $M^{\operatorname{et}} \in \operatorname{Mod}^{\operatorname{et}}_K$ such that $T(M^{\operatorname{et}}) = T$. Remark~\ref{ptorsioniseasy} shows that any $k[[u]]$-lattice $M \subset M^{\operatorname{et}}$ is an object of $\operatorname{Mod}^{\operatorname{BK}}_k$ with $T(M) = T$.
	
	For the second part, there exists an exact sequence $0 \rightarrow M_1^{\operatorname{et}} \rightarrow M^{\operatorname{et}} \rightarrow M_2^{\operatorname{et}} \rightarrow 0$ such that $T(M_i^{\operatorname{et}}) = T_i$ and such that $M^{\operatorname{et}} = M[\frac{1}{u}]$. Since $M_2$ is torsion-free we must have $M_1 = M \cap M_1^{\operatorname{et}}$ and $M_2 = \operatorname{Im}(M) \cap M_2^{\operatorname{et}}$.
\end{proof}

\begin{construction}\label{compositionseries}
	Let $M \in \operatorname{Mod}^{\operatorname{BK}}_k$. A composition series of $M$ is a filtration 
	$$
	0 = M_n \subset \ldots \subset M_0 = M
	$$
	by sub-Breuil--Kisin modules such that each $M_i/M_{i+1}$ is an irreducible object (i.e. admits no non-zero proper sub-objects $N \in \operatorname{Mod}^{\operatorname{BK}}_k$ such that the cokernel of $N \hookrightarrow M_i/M_{i+1}$ is $k[[u]]$-torsion-free) of $\operatorname{Mod}^{\operatorname{BK}}_k$. Lemma~\ref{ptorsionexactness} implies being irreducible is equivalent to asking that $T(M_i/M_{i+1})$ is an irreducible $G_{K_\infty}$-representation. Lemma~\ref{ptorsionexactness} also implies that composition series for $M$ are in bijection with composition series for $T(M)$.
\end{construction}
\begin{warning}
	It is not the case that the set of irreducible factors of a composition series is independent of the choice of composition series.
\end{warning}
\subsection{Strong divisibility}
In this subsection we define a full-subcategory $\operatorname{Mod}^{\operatorname{SD}}_k \subset \operatorname{Mod}^{\operatorname{BK}}_k$ which we view as an extension of $p$-torsion Fontaine--Laffaille theory to filtrations of length $p$.

\begin{construction}\label{filtrationphiM}
	Let $M$ be an object of $\operatorname{Mod}^{\operatorname{BK}}_k$. Recall $M^\varphi$ is the $k[[u]]$-sub-module of $M[\frac{1}{u}]$ generated by $\varphi(M)$. Equip $M^\varphi$ with a filtration given by $F^i M^\varphi = M^\varphi \cap u^iM$. Let $M^\varphi_k = M^\varphi/u$. We equip this $k$-vector space with the quotient filtration.
\end{construction}

\begin{definition}\label{BKweights}
	If $M \in \operatorname{Mod}_k^{\operatorname{BK}}$ let $\operatorname{Weight}(M)$ be the multiset of integers containing $i$ with multiplicity
	$$
	\operatorname{dim}_k \operatorname{gr}^i(M^\varphi_k)
	$$
\end{definition}
\begin{construction}\label{filtrationMitself}
	Similarly to Construction~\ref{filtrationphiM} we equip $M$ with a filtration by setting $F^iM = \lbrace m \in M \mid \varphi(m) \in u^iM \rbrace$. The semilinear injection 
	$$
	\varphi\colon M \hookrightarrow M^\varphi
	$$ 
	is then a morphism of filtered modules. Let $M_k = M/u$. We equip this $k$-vector space with the quotient filtration.
\end{construction}

\begin{lemma}\label{filtrationMk}
	The injection $\varphi\colon M \hookrightarrow M^\varphi$ induces a functorial $k$-semilinear automorphism of filtered vector spaces
	$$
	M_k \rightarrow M^\varphi_k
	$$
\end{lemma}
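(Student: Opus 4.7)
My plan is to factor the $\varphi$-semilinear injection $\varphi\colon M \hookrightarrow M^\varphi$ through the Frobenius pullback $\varphi^*M := M\otimes_{\varphi,k[[u]]}k[[u]]$, realising it as the composition
\[
M \xrightarrow{m\mapsto m\otimes 1} \varphi^*M \xrightarrow{\varphi_M} M^\varphi,
\]
in which the first arrow is $\varphi$-semilinear and the second is $k[[u]]$-linear. The second arrow is surjective by the definition of $M^\varphi$, and it is an isomorphism after inverting $u$ (because $E \equiv u^e \bmod p$, so the defining Breuil--Kisin isomorphism $\varphi_M[1/E]$ becomes an isomorphism after inverting $u$); since $\varphi^*M$ is free of the same rank as $M$ and hence $u$-torsion-free, it is in fact an isomorphism of $k[[u]]$-modules. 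Reducing modulo $u$, the first arrow becomes the $\varphi$-semilinear bijection sending a basis $(\bar e_j)$ of $M_k$ to the basis $(\overline{e_j\otimes 1})$ of $\varphi^*M/u$, and the second a $k$-linear isomorphism $\varphi^*M/u \xrightarrow{\sim} M^\varphi_k$; their composition is the desired $\bar\varphi$, automatically $\varphi$-semilinear, bijective, and functorial in $M$.

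That $\bar\varphi$ is a morphism of filtered vector spaces follows at once from the definitions: if $m\in F^iM$ then $\varphi(m)\in u^iM$ and $\varphi(m)\in M^\varphi$, so $\varphi(m)\in u^iM\cap M^\varphi = F^iM^\varphi$, whence $\bar\varphi(F^iM_k)\subset F^iM^\varphi_k$.

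The principal obstacle is the upgrade to an isomorphism of filtered vector spaces, equivalently the dimension equality $\dim_k F^iM_k = \dim_k F^iM^\varphi_k$ for every $i$. I plan to exploit Lemma~\ref{liftinggr} to fix an adapted basis: integers $r_1,\ldots,r_n$ and a basis $v_1,\ldots,v_n$ of $M$ with $(u^{r_j}v_j)$ a basis of $M^\varphi$. This instantly gives $\dim F^iM^\varphi_k = \#\{j : r_j\geq i\}$, simultaneously identifying $\operatorname{Weight}(M)$ with the multiset $\{r_1,\ldots,r_n\}$ by Definition~\ref{BKweights}. Writing $\varphi(v_j) = \sum_k Y_{kj}(u^{r_k}v_k)$ with $Y\in\operatorname{GL}_n(k[[u]])$ (invertibility via the isomorphism $\varphi\colon \varphi^*M\xrightarrow{\sim} M^\varphi$), the condition $m=\sum c_jv_j \in F^iM$ unfolds as $\sum_j Y_{kj}\varphi(c_j)\in u^{i-r_k}k[[u]]$ for every $k$ with $r_k<i$. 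Taking the $u^0$-coefficient gives the $\#\{k:r_k<i\}$ linear constraints $\sum_j\bar Y_{kj}\bar c_j^p=0$ on $\bar c^p\in k^n$; these are independent since $\bar Y\in\operatorname{GL}_n(k)$, yielding the upper bound $\dim F^iM_k \leq \#\{k:r_k\geq i\}$.

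The delicate technical step, which I anticipate as the main obstacle, is showing that the intermediate coefficients of $u^s$ for $0<s<p$ do not impose extra independent constraints. This is genuinely subtle: because $\varphi(c_j)$ has non-zero coefficients only in degrees divisible by $p$, the higher-order coefficients of the lift $c_j$ cannot be used to absorb conditions coming from coefficients $Y_{kj}^{(s)}$ with $0<s<p$, and so matters hinge on how the matrix $Y$ interacts with its higher-degree coefficients. Closing this step should require either refining the Smith basis to arrange $Y\in\operatorname{GL}_n(k)$ (so that all $Y_{kj}^{(s)}$ with $s>0$ vanish, trivialising the higher-degree conditions and reducing to a clean Fontaine--Laffaille-style computation), or an argument combining the invertibility of $Y$ with an iterative lift of the higher $u^{lp}$-coefficients of each $c_j$ to satisfy the conditions at degrees $\geq p$, thereby producing the required dimension match $\dim F^iM_k = \#\{k:r_k\geq i\}$.
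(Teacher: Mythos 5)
Your first two paragraphs are correct and essentially reproduce the paper's argument: you show that the reduction modulo~$u$ of $\varphi\colon M \hookrightarrow M^\varphi$ is a $k$-semilinear bijection of the underlying $k$-vector spaces, and that it is a morphism in $\operatorname{Fil}(k)$, i.e.\ sends $F^iM_k$ into $F^iM^\varphi_k$. (The paper obtains the bijection by a dimension count plus surjectivity, using that $\varphi$ is an automorphism of $k = k[[u]]/u$; your route through $\varphi^*M$ is equivalent.) That is the full content of Lemma~\ref{filtrationMk}.

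The third and fourth paragraphs, however, set out to prove the dimension equality $\dim_k F^iM_k = \dim_k F^iM^\varphi_k$ for all $i$, which would make the map an isomorphism in $\operatorname{Fil}(k)$. That is not what the lemma asserts, and it is false in general. The phrase ``$k$-semilinear automorphism of filtered vector spaces'' here means a bijection of underlying spaces which is a morphism of filtered modules, not an isomorphism in $\operatorname{Fil}(k)$: the very next result, Lemma~\ref{equivSD}, takes ``$M_k \rightarrow M^\varphi_k$ is an isomorphism of filtered modules'' as its condition (1) and proves it equivalent to the existence of a $k[[u]]$-basis $(f_i)$ of $M$ with $(u^{r_i}f_i)$ a $k[[u^p]]$-basis of $\varphi(M)$. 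That condition, together with the weight bound, is precisely strong divisibility (Definition~\ref{SDdef}); if your dimension equality held automatically, strong divisibility would collapse to the weight bound alone and much of Sections~\ref{strong-divisible} and~\ref{irreducible} would be vacuous. Concretely, for $p \geq 3$ take $M$ of rank $2$ with $\varphi(e_1) = e_1$ and $\varphi(e_2) = ue_1 + u^2e_2$. Then $M^\varphi = k[[u]]e_1 \oplus k[[u]]u^2e_2$, giving $\operatorname{Weight}(M) = \{0,2\}$ and $\dim_k F^2M^\varphi_k = 1$, whereas $F^2M = uM$ so $F^2M_k = 0$. Your anticipated ``delicate technical step'' --- refining the adapted basis so that $Y \in \operatorname{GL}_n(k)$, or showing the intermediate $u^s$-coefficients of $Y$ impose no new constraints --- therefore cannot be closed in general: it is equivalent to strong divisibility, which is an additional hypothesis, not a consequence. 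The lemma is already fully proved at the end of your second paragraph.
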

\begin{proof}
	All that needs to be checked is that $\varphi\colon M \rightarrow M^\varphi$ induces a $k$-semilinear isomorphism $M_k \rightarrow M_{k}^\varphi$. As $M_k$ and $M^\varphi_k$ have the same dimension over $k$ we only need to check surjectivity. As $M^\varphi$ is the $k[[u]]$-module generated by $\varphi(M) \subset M[\frac{1}{u}]$ surjectivity follows because $\varphi$ is an automorphism on $k = k[[u]]/u$.
\end{proof}

\begin{lemma}\label{equivSD}
	Let $M$ be an object of $\operatorname{Mod}^{\operatorname{BK}}_k$. The following are equivalent:
	\begin{enumerate}
		\item The map $M_k \rightarrow M_k^\varphi$ is an isomorphism of filtered modules.
		\item There exists a $k[[u]]$-basis $(f_i)$ of $M$ and integers $(r_i)$ such that $(u^{r_i}f_i)$ is a $k[[u^p]]$-basis of $\varphi(M)$.
	\end{enumerate}
\end{lemma}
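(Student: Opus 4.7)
The plan is to prove the equivalence by treating the two directions separately: (1) $\Rightarrow$ (2) using Lemma~\ref{liftinggr} as the main tool, and (2) $\Rightarrow$ (1) via a direct computation showing the two filtrations on $M_k$ and $M_k^\varphi$ have matching dimensions. Write $\bar\varphi \colon M_k \to M_k^\varphi$ for the $k$-semilinear isomorphism of underlying vector spaces provided by Lemma~\ref{filtrationMk}.

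For (1) $\Rightarrow$ (2), I would start by choosing a $k$-basis $(\bar m_i)$ of $M_k$ adapted to its filtration: $\bar m_i \in F^{r_i} M_k$ for integers $r_i$, with images in $\operatorname{gr}^{r_i}(M_k)$ forming a $k$-basis of $\operatorname{gr}(M_k)$. Each $\bar m_i$ lifts to $m_i \in F^{r_i} M$ since $F^{r_i}M_k$ is by definition the image of $F^{r_i} M$, and Nakayama makes $(m_i)$ a $k[[u]]$-basis of $M$. Because $m_i \in F^{r_i} M$ we have $\varphi(m_i) \in u^{r_i} M$, so set $f_i := u^{-r_i} \varphi(m_i) \in M$. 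To conclude $(f_i)$ is a $k[[u]]$-basis of $M$, I apply Lemma~\ref{liftinggr} with $A = k[[u]]$, $a = u$, ambient free module $M$, and $N = M^\varphi$, taking $g_i := \varphi(m_i) \in F^{r_i} M^\varphi$. Their images in $\operatorname{gr}^{r_i}(M_k^\varphi)$ are precisely $\bar\varphi(\bar m_i)$, and assumption (1) ensures these form a $k$-basis of $\operatorname{gr}(M_k^\varphi)$; the lemma then gives that $(u^{-r_i} g_i) = (f_i)$ is a $k[[u]]$-basis of $M$. Finally, $(u^{r_i} f_i) = (\varphi(m_i))$ is automatically a $k[[u^p]]$-basis of $\varphi(M)$ since $(m_i)$ is a $k[[u]]$-basis of $M$.

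For (2) $\Rightarrow$ (1), write each $u^{r_i} f_i \in \varphi(M)$ as $\varphi(m_i)$ for a unique $m_i \in M$; the perfectness of $k$ (allowing $p$-th roots in $k[[u]]$) gives that $(m_i)$ is a $k[[u]]$-basis of $M$. The relation $\varphi(m_i) = u^{r_i} f_i \in u^{r_i} M$ yields $m_i \in F^{r_i} M$. Using the $k[[u]]$-basis $(m_i)$ of $M$ and the $k[[u]]$-basis $(u^{r_i} f_i)$ of $M^\varphi$, a direct computation shows
\[
F^j M_k \;=\; \bigoplus_{i \,:\, r_i \geq j} k \cdot \bar m_i, \qquad F^j M_k^\varphi \;=\; \bigoplus_{i \,:\, r_i \geq j} k \cdot \overline{u^{r_i} f_i},
\]
where for $M_k^\varphi$ one uses $F^j M^\varphi = \sum_i u^{\max(0, j - r_i)} k[[u]] \cdot u^{r_i} f_i$, and for $M_k$ one uses that $m = \sum b_i m_i \in F^j M$ iff $b_i \in u^{\lceil \max(0, j - r_i)/p \rceil} k[[u]]$, a consequence of $\varphi(m) = \sum b_i^p u^{r_i} f_i \in u^j M$. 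In particular $\dim_k F^j M_k = \dim_k F^j M_k^\varphi$ for all $j$. Combined with the automatic inclusion $\bar\varphi(F^j M_k) \subset F^j M_k^\varphi$ and the fact that $\bar\varphi$ is a $k$-semilinear bijection, equality of dimensions forces $\bar\varphi(F^j M_k) = F^j M_k^\varphi$, giving (1).

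The main technical subtlety is verifying the hypothesis of Lemma~\ref{liftinggr} in the forward direction: namely that the images of $\varphi(m_i)$ in $\operatorname{gr}^{r_i}(M_k^\varphi)$ form a basis of $\operatorname{gr}(M_k^\varphi)$. This is exactly where assumption (1)---that $\bar\varphi$ is an isomorphism of filtered and not merely semilinear vector spaces---enters essentially: the induced isomorphism on associated graded pieces transports the chosen basis of $\operatorname{gr}(M_k)$ across to a basis of $\operatorname{gr}(M_k^\varphi)$.
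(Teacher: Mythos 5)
Your proof is correct and takes essentially the same approach as the paper: for (1) $\Rightarrow$ (2) you lift an adapted basis of $\operatorname{gr}(M_k)$ and invoke Lemma~\ref{liftinggr} exactly as the paper does, and for (2) $\Rightarrow$ (1) you compute both $F^jM_k$ and $F^jM_k^\varphi$ explicitly and compare dimensions, whereas the paper computes only $F^jM_k^\varphi$ and checks that $\bar\varphi$ hits each filtered piece---a minor repackaging of the same calculation.
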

\begin{proof}
	Suppose $M_k \rightarrow M^\varphi_k$ is an isomorphism of filtered modules. We can find integers $r_i$ and elements $g_i \in F^{r_i} M$ whose images in $\operatorname{gr}(M_k)$ form a $k$-basis. As the induced map $\operatorname{gr}(M_k) \rightarrow \operatorname{gr}(M^\varphi_k)$ is an isomorphism it follows that the images of $\varphi(g_i) \in \varphi(M)$ in $\operatorname{gr}(M_k^\varphi)$ form a $k$-basis. Applying Lemma~\ref{liftinggr} with $M = M$, $N = M^\varphi$ and $a \in A$ equal to $u \in k[[u]]$ proves that (1) implies (2) with $f_i = u^{-r_i}\varphi(g_i)$.
	
	To prove (2) implies (1) we use the $f_i$ to give explicit descriptions of the filtration on $M^\varphi_k$. Since $\varphi(M)$ generates $M^\varphi$ over $k[[u]]$ every $m \in M^\varphi$ can be written as $m = \sum \alpha_i (u^{r_i}f_i)$ with $\alpha_i \in k[[u]]$. If $m \in F^jM^\varphi$ then $\alpha_i \in u^{\operatorname{max} \lbrace j-r_i,0 \rbrace}k[[u]] = F^{j-r_i}k[[u]]$ since the $f_i$ form a basis of $M$. Hence 
	$$
	F^j M^\varphi = \sum (F^{j-r_i} k[[u]])(u^{r_i}f_i)
	$$
	and so $F^j M^\varphi_k = \sum_{r_i \geq j} k\overline{f}_i$ where $\overline{f}_i$ denotes the image of $u^{r_i}f_i$ in $M^\varphi_k$. If $g_i \in M$ is such that $\varphi(g_i) = u^{r_i}f_i$ we have $g_i \in F^{j} M$ if $r_i \geq j$. If $\overline{g}_i$ denotes the image of $g_i$ in $M_k$ then since the map $M_k \rightarrow M^\varphi_k$ sends $\overline{g}_i \mapsto \overline{f}_i$, it induces surjections $F^j M_k \rightarrow F^j M^\varphi_k$. Thus $M_k \rightarrow M^\varphi_k$ is an isomorphism in $\operatorname{Fil}(k)$.
\end{proof}
\begin{remark}\label{riremark}
	Note that if we have a basis as in (2) of Lemma~\ref{equivSD} then the above proof shows that $\operatorname{gr}^j(M_k^\varphi) = \sum_{r_i = j} k \overline{f}_i$. Thus the multiset $\lbrace r_i \rbrace$ is equal to $\operatorname{Weight}(M)$.
\end{remark}
\begin{remark}\label{intermsofmatrices}
	Isomorphism classes of objects in $\operatorname{Mod}^{\operatorname{BK}}_k$ can be described explicitly. Choosing a basis and considering the matrix of $\varphi \colon M \hookrightarrow M[\frac{1}{u}]$ with respect to that basis describes a bijection
	\begin{equation}\label{choosebasis}
	\Big\lbrace\text{\parbox{5.5cm}{\centering isomorphism classes of rank $n$  \newline objects of $\operatorname{Mod}^{\operatorname{BK}}_k$}} \Big\rbrace \leftrightarrow \operatorname{GL}_n(k((u))) / \sim 
	\end{equation}
	Here $A \sim B$ if there exists $C \in \operatorname{GL}_n(k[[u]])$ such that $A = C^{-1}B \varphi(C)$. Recall that any invertible matrix over $k((u))$ can be written as $C_1 \Lambda C_2$ where $\Lambda = \operatorname{diag}(u^{r_i})$ and $C_i \in \operatorname{GL}_n(k[[u]])$.
	\begin{itemize}
		\item If $M$ is an object of $\operatorname{Mod}^{\operatorname{BK}}_k$ corresponding under \eqref{choosebasis} to a $\varphi$-conjugacy class represented by $C_1\Lambda C_2$ then the $(r_i) = \operatorname{Weight}(M)$.
		\item  The isomorphism classes of Breuil--Kisin modules satisfying the equivalent conditions of Lemma~\ref{equivSD} identify, via \eqref{choosebasis}, with $\varphi$-conjugacy classes represented by matrices $C_1\Lambda$ with $C_1 \in \operatorname{GL}_n(k[[u]])$ and $\Lambda = \operatorname{diag}(u^{r_i})$.
	\end{itemize}  
\end{remark}

\begin{definition}\label{SDdef}
	Let $\operatorname{Mod}^{\operatorname{SD}}_k \subset \operatorname{Mod}^{\operatorname{BK}}_k$ denote the full subcategory whose objects satisfy the equivalent conditions of Lemma~\ref{equivSD} and have $\operatorname{Weight}(M) \subset [0,p]$. We say such $M$ are strongly divisible.
\end{definition} 
\subsection{Strong divisibility with coefficients}

We reproduce the previous subsection allowing $\mathcal{O}$-coefficients.
\begin{definition}
	Let $\operatorname{Mod}^{\operatorname{BK}}_k(\mathcal{O})$ denote the full subcategory of $\operatorname{Mod}^{\operatorname{BK}}_K(\mathcal{O})$ whose objects are finite free over $k[[u]] \otimes_{\mathbb{F}_p} \mathbb{F}$. This is equivalent to being free over $k[[u]]$ and killed by $\varpi$ after Corollary~\ref{freethings}.
\end{definition}
\begin{remark}
	As in Construction~\ref{O-semilinear} each $M \in \operatorname{Mod}^{\operatorname{BK}}_k(\mathcal{O})$ decomposes as
	$$
	M = \prod_{\tau \in \operatorname{Hom}_{\mathbb{F}_p}(k,\mathbb{F})} M_{\tau}
	$$
	with each $M_{\tau}$ a finite free module over $\mathbb{F}[[u]]$. Since the filtration on $M$ is by $k[[u]] \otimes_{\mathbb{F}_p} \mathbb{F}$-sub-modules this is a decomposition of filtered modules. Thus $M_k = \prod_{\tau} M_{k,\tau}$ as filtered modules (each $M_{k,\tau}$ being a filtered $\mathbb{F}$-vector space). Analogous statements hold for $M^\varphi$ and $M^\varphi_k$.
\end{remark}

\begin{definition}
	For $\tau \in \operatorname{Hom}_{\mathbb{F}_p}(k,\mathbb{F})$ let $\operatorname{Weight}_\tau(M)$ be the multiset of integers which contains $i$ with multiplicity equal to 
	$$
	\operatorname{dim}_{\mathbb{F}} \operatorname{gr}^i(M^\varphi_{k,\tau})
	$$
	Since $M^\varphi_k = \prod M^\varphi_{k,\tau}$ we have that $\operatorname{Weight}(M)$ equals the union over all $\tau$ of $[\mathbb{F}:k]$ copies of  $\operatorname{Weight}_\tau(M)$.
\end{definition}

The following is a version of Lemma~\ref{equivSD} for objects of $\operatorname{Mod}^{\operatorname{BK}}_k(\mathcal{O})$ and is proved in exactly the same fashion.
\begin{lemma}\label{equivSDO}
	Let $M$ be an object of $\operatorname{Mod}^{\operatorname{BK}}_k(\mathcal{O})$. Then the following are equivalent:
	\begin{enumerate}
		\item The semilinear map $M_k \rightarrow M^\varphi_k$ is an isomorphism of filtered modules.
		\item For $\tau \in \operatorname{Hom}_{\mathbb{F}_p}(k,\mathbb{F})$ there exists an $\mathbb{F}[[u]]$-basis $(f_i)$ of $M_\tau$ and integers $(r_i)$ such that $(u^{r_i}f_i)$ is an $\mathbb{F}[[u^p]]$-basis of $\varphi(M)_\tau$.
	\end{enumerate}
\end{lemma}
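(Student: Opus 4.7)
The plan is to follow the proof of Lemma~\ref{equivSD} essentially verbatim, but working one $\tau$-component at a time, using the decomposition $M = \prod_\tau M_\tau$ (which respects the filtrations on $M$, $M^\varphi$, $M_k$, and $M^\varphi_k$). The one subtlety to track is that, by Construction~\ref{O-semilinear}, $\varphi$ sends $M_{\tau \circ \varphi}$ into $M^\varphi_\tau$ rather than preserving the $\tau$-component. Since the decomposition $M_k^\varphi = \prod_\tau M^\varphi_{k,\tau}$ is by filtered $\mathbb{F}$-modules, condition (1) is equivalent to the requirement that for every $\tau$ the $\varphi$-semilinear map $M_{k,\tau \circ \varphi} \to M_{k,\tau}^\varphi$ be an isomorphism of filtered $\mathbb{F}$-vector spaces.

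For $(1)\Rightarrow(2)$, fix $\tau \in \operatorname{Hom}_{\mathbb{F}_p}(k,\mathbb{F})$. Using that $M_{k,\tau \circ \varphi} \xrightarrow{\sim} M^\varphi_{k,\tau}$ is an isomorphism of filtered modules, I would choose integers $(r_i)$ and elements $g_i \in F^{r_i} M_{\tau \circ \varphi}$ whose images in $\operatorname{gr}(M_{k,\tau \circ \varphi})$ form an $\mathbb{F}$-basis; then the images of $\varphi(g_i) \in M^\varphi_\tau$ in $\operatorname{gr}(M_{k,\tau}^\varphi)$ also form an $\mathbb{F}$-basis. Applying Lemma~\ref{liftinggr} with $A = \mathbb{F}[[u]]$, $a = u$, ``$M$''\,$=M_\tau$, and ``$N$''\,$= M^\varphi_\tau$, together with the elements $\varphi(g_i) \in F^{r_i} M^\varphi_\tau$, shows that the $\varphi(g_i)$ form an $\mathbb{F}[[u]]$-basis of $M^\varphi_\tau$ and that $f_i := u^{-r_i}\varphi(g_i)$ form an $\mathbb{F}[[u]]$-basis of $M_\tau$. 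Since $\varphi$ is $\varphi$-semilinear, the $\varphi(g_i) = u^{r_i} f_i$ are an $\mathbb{F}[[u^p]]$-basis of $\varphi(M)_\tau$, as required.

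For $(2)\Rightarrow(1)$, I would fix $\tau$ and imitate the corresponding half of the proof of Lemma~\ref{equivSD}: using the basis $(f_i)$ of $M_\tau$ and the relation $u^{r_i} f_i = \varphi(g_i)$ for some $g_i \in M_{\tau \circ \varphi}$, expand a general element of $M^\varphi_\tau$ in terms of the $u^{r_i} f_i$ to compute
\[
F^j M^\varphi_\tau = \sum_i (F^{j-r_i}\mathbb{F}[[u]])\,(u^{r_i} f_i),
\]
so that $F^j M^\varphi_{k,\tau} = \sum_{r_i \geq j} \mathbb{F}\,\overline{f}_i$. Checking that $g_i \in F^{r_i} M_{\tau \circ \varphi}$ and tracing through the map $M_{k,\tau \circ \varphi} \to M^\varphi_{k,\tau}$, which sends $\overline{g}_i \mapsto \overline{f}_i$, then gives surjections $F^j M_{k,\tau\circ\varphi} \twoheadrightarrow F^j M^\varphi_{k,\tau}$ for every $j$, so this map is an isomorphism of filtered $\mathbb{F}$-vector spaces. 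Taking the product over all $\tau$ recovers (1).

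The main obstacle is essentially bookkeeping: keeping track of the shift $\tau \leftrightarrow \tau \circ \varphi$ induced by $\varphi$ on the idempotent decomposition, and making sure the invocation of Lemma~\ref{liftinggr} is applied with the correct pair $(M_\tau, M^\varphi_\tau)$ rather than with a single $\tau$ on both sides. Once this is set up, both implications are formally identical to the coefficient-free case.
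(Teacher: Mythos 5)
Your proof is correct and is precisely the argument the paper intends: the paper disposes of Lemma~\ref{equivSDO} with the remark that it ``is proved in exactly the same fashion'' as Lemma~\ref{equivSD}, and what you have written is that argument carried out $\tau$-component by $\tau$-component, correctly tracking the shift $\tau \circ \varphi \leftrightarrow \tau$ induced by $\varphi$ on the idempotents and applying Lemma~\ref{liftinggr} to the pair $(M_\tau, M^\varphi_\tau)$.
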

\begin{remark}\label{WtremarkO}
	As in Remark~\ref{riremark} if bases as in (2) of Lemma~\ref{equivSDO} exist then the multiset $\lbrace r_{i,\tau} \rbrace$ equals $\operatorname{Weight}_\tau(M)$.
\end{remark}
\begin{remark}
	There is the following analogue of Remark~\ref{intermsofmatrices} for $\operatorname{Mod}^{\operatorname{BK}}_k(\mathcal{O})$. Choosing $\mathbb{F}[[u]]$-bases for each $M_{\tau}$ and taking the matrices representing $\varphi$ with respect to these bases describes a bijection
	$$
	\Big\lbrace\text{\parbox{5.5cm}{\centering isomorphism classes of rank $n$  \newline objects of $\operatorname{Mod}^{\operatorname{BK}}_k(\mathcal{O})$}} \Big\rbrace \leftrightarrow \operatorname{GL}_n(\mathbb{F}((u)))^f / \sim 
	$$
	where $f = [K:\mathbb{Q}_p]$ and where two $f$-tuples of matrices satisfy $(A_\tau) \sim (B_\tau)$ if there exist $C_{\tau} \in \operatorname{GL}_n(\mathbb{F}[[u]])$ such that $A_\tau = C_\tau^{-1}B_{\tau}\varphi(C_{\tau \circ \varphi})$ for all $\tau$. Each $A_{\tau}$ can be written as $C_{\tau}\Lambda_\tau C'_\tau$ with $C_\tau,C_{\tau}' \in \operatorname{GL}_n(\mathbb{F}[[u]])$ and $\Lambda_\tau = \operatorname{diag}(u^{r_{i,\tau}})$.
	\begin{itemize}
		\item The multiset $\lbrace r_{i,\tau} \rbrace$ is the multiset $\operatorname{Weight}_\tau(M)$.
		\item The $M$ which satisfy Lemma~\ref{equivSDO} correspond to classes represented by an $f$-tuple of matrices $(A_\tau)$ such that each $A_{\tau} = C_{\tau} \Lambda_\tau$.
	\end{itemize}
\end{remark}

\begin{definition}
	Let $\operatorname{Mod}^{\operatorname{SD}}_k(\mathcal{O}) \subset \operatorname{Mod}^{\operatorname{BK}}_k(\mathcal{O})$ denote the full subcategory whose objects are strongly divisible when viewed as objects of $\operatorname{Mod}^{\operatorname{BK}}_k$.
\end{definition}

\subsection{Subquotients} We now show $\operatorname{Mod}^{\operatorname{SD}}_k$ and $\operatorname{Mod}^{\operatorname{SD}}_k(\mathcal{O})$ are closed under subquotients.
\begin{remark}\label{exactseqoffilteredstuff}
	If $M \in \operatorname{Mod}^{\operatorname{BK}}_k$ then there are exact sequences
	$$
	\begin{aligned}
	0 \rightarrow \operatorname{gr}^{i-1}(M^\varphi) \xrightarrow{u}& \operatorname{gr}^i(M^\varphi) \rightarrow \operatorname{gr}^i(M^\varphi_k) \rightarrow 0 \\
	0 \rightarrow \operatorname{gr}^{i-p}(M) \xrightarrow{u}& \operatorname{gr}^i(M) \rightarrow \operatorname{gr}^i(M_k) \rightarrow 0
	\end{aligned}
	$$
	The first is just the exact sequence \eqref{exactsequenceusedlater} in the case $M = M$ and $N = M^\varphi$ with $A = k[[u]]$ and $a = u$. The second exact sequence is obtained similarly (using that $F^i(uM) = u(F^{i - p} M)$).
\end{remark}

\begin{lemma}\label{N->Pstrict}
	Let $0 \rightarrow M \rightarrow N \rightarrow P \rightarrow 0$ be an exact sequence in $\operatorname{Mod}^{\operatorname{BK}}_k$.
	\begin{enumerate}
		\item The map $N \rightarrow P$ is strict when viewed as a map of filtered modules if and only if $0 \rightarrow M_k \rightarrow N_k \rightarrow P_k \rightarrow 0$ is an exact sequence in $\operatorname{Fil}(k)$ in the sense of Notation~\ref{filteredexact}.
		\item The map $N^\varphi \rightarrow P^\varphi$ is strict if and only if $0 \rightarrow M_k^\varphi \rightarrow N_k^\varphi \rightarrow P_k^\varphi \rightarrow 0$ is exact in $\operatorname{Fil}(k)$
		\item Statement (2) is equivalent to $M_k^\varphi \rightarrow N_k^\varphi$ being strict, which is equivalent to $N_k^\varphi \rightarrow P_k^\varphi$ being strict.
	\end{enumerate}
\end{lemma}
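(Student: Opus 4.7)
The plan is to reduce all three parts to statements about associated gradeds via Lemma~\ref{fantasticstuff}, and then to pass between the Breuil--Kisin and residue levels using the column exact sequences of Remark~\ref{exactseqoffilteredstuff}. First I would observe that since $P$ is free over $k[[u]]$ the sequence $0 \to M \to N \to P \to 0$ splits over $k[[u]]$, giving both that $0 \to M^\varphi \to N^\varphi \to P^\varphi \to 0$ is short exact and that $u^iN \cap M[\frac{1}{u}] = u^iM$; the latter forces $M \hookrightarrow N$ and $M^\varphi \hookrightarrow N^\varphi$ to carry the induced filtrations. By Lemma~\ref{fantasticstuff}(3), strictness of $N \to P$ (resp.\ of $N^\varphi \to P^\varphi$) is then equivalent to short exactness of $0 \to \operatorname{gr}(M) \to \operatorname{gr}(N) \to \operatorname{gr}(P) \to 0$ (resp.\ of the same with $\varphi$-superscripts), and by Notation~\ref{filteredexact} exactness of the mod-$u$ sequences in $\operatorname{Fil}(k)$ (which are short exact as $k$-vector spaces by $u$-torsion-freeness of $P$ and $P^\varphi$) amounts to short exactness of their gradeds.

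For Part (1), the second exact sequence of Remark~\ref{exactseqoffilteredstuff} provides short-exact vertical columns linking the graded sequence for $M, N, P$ at index $i - p$, at index $i$, and the graded sequence for $M_k, N_k, P_k$ at index $i$. If the middle row is short exact for all $i$, the $3 \times 3$ lemma (applied using middle-row exactness at both $i$ and $i - p$) gives short exactness of the bottom row. Conversely, set $Q_i = \operatorname{coker}(\operatorname{gr}^i(N) \to \operatorname{gr}^i(P))$; the bottom-row short exactness forces $Q_i / u Q_{i-p} = 0$, so that $Q_i = u Q_{i-p}$, and since $\operatorname{gr}^i P = 0$ (hence $Q_i = 0$) for $i$ sufficiently negative, an upward induction yields $Q_i = 0$ for all $i$. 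Combined with the automatic exactness at the first two spots (from $M$ having the induced filtration), this gives the middle row short exact. Part (2) is proved identically, using instead the first exact sequence of Remark~\ref{exactseqoffilteredstuff} (in which $u$ raises degree by $1$), with the base case for the induction provided by $F^iP^\varphi = P^\varphi$ for $i \ll 0$.

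For Part (3), the equivalence of Statement (2) with strictness of $M_k^\varphi \to N_k^\varphi$ is the definition of exactness in $\operatorname{Fil}(k)$, so the content is the equivalence with strictness of $N_k^\varphi \to P_k^\varphi$. The key input I would establish is that the weight sum is additive in short exact sequences: by Definition~\ref{BKweights}, $\sum_i i \dim_k \operatorname{gr}^i(X_k^\varphi) = \sum_{w \in \operatorname{Weight}(X)} w$ for $X \in \lbrace M, N, P \rbrace$, and by Remark~\ref{intermsofmatrices} the latter equals the $u$-adic valuation of $\det \varphi_X$. Choosing a basis of $N$ extending one of $M$ makes the matrix of $\varphi_N$ block upper triangular with diagonal blocks $\varphi_M$ and $\varphi_P$, so $\det \varphi_N = \det \varphi_M \cdot \det \varphi_P$, which yields
$$
\sum_i i \dim_k \operatorname{gr}^i(N_k^\varphi) = \sum_i i \dim_k \operatorname{gr}^i(M_k^\varphi) + \sum_i i \dim_k \operatorname{gr}^i(P_k^\varphi).
$$
With this equality, Corollary~\ref{exactsequencesandsums} immediately forces strictness of either $M_k^\varphi \to N_k^\varphi$ or $N_k^\varphi \to P_k^\varphi$ to imply that the whole sequence is exact in $\operatorname{Fil}(k)$, and hence both maps are strict. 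The main obstacle, once the framework of Parts (1) and (2) is set up, is this additivity; the rest is a direct application of the corollary.
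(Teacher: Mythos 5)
Your overall strategy matches the paper's: reduce to graded statements via Lemma~\ref{fantasticstuff}, pass between levels via the column sequences of Remark~\ref{exactseqoffilteredstuff}, and settle (3) with a determinant additivity argument feeding into Corollary~\ref{exactsequencesandsums}. The preliminary observation that $M \hookrightarrow N$ and $M^\varphi \hookrightarrow N^\varphi$ carry the induced filtrations is the same strictness fact the paper establishes first, and your derivation of it is valid. The forward direction of (1)/(2) and the whole of (3), including the block-upper-triangular explanation for additivity of $v_u(\det\varphi)$, are fine.

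The one place your write-up does not quite close is the converse of (1)/(2). You define $Q_i = \operatorname{coker}(\operatorname{gr}^i(N) \to \operatorname{gr}^i(P))$, show by snake lemma and upward induction that $Q_i = 0$ for all $i$, and then assert that this ``combined with the automatic exactness at the first two spots (from $M$ having the induced filtration)'' gives short exactness of the middle row. But exactness at $\operatorname{gr}^i(N)$ is \emph{not} automatic from $M \to N$ being strict: strictness of $M \to N$ only gives injectivity of $\operatorname{gr}^i(M) \to \operatorname{gr}^i(N)$ and that the composite to $\operatorname{gr}^i(P)$ is zero. A complex can be injective at the left and surjective at the right without being exact in the middle (think $0 \to 0 \to k^2 \to k \to 0$). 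What actually finishes the argument is a separate invocation: since $N$ is complete and $P$ is Hausdorff for these filtrations, Lemma~\ref{fantasticstuff}~$(2')$ (equivalently Lemma~\ref{caenstuff}(2) applied to $N/M \to P$) says surjectivity of $\operatorname{gr}^i(N) \to \operatorname{gr}^i(P)$ for all $i$ is already equivalent to strictness of $N \to P$, so exactness at $\operatorname{gr}^i(N)$ then follows rather than being an independent input. The paper sidesteps this by inducting directly on exactness of the full middle column via the nine lemma, using exactness of the left column at $i-p$ and the right column at $i$. Either route works, but as stated your ``automatic exactness'' claim is a genuine misstep; with the appeal to Lemma~\ref{fantasticstuff}~$(2')$ inserted in place of it, your proof is correct. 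A similar but harmless imprecision appears in (3): the equivalence of ``the sequence is exact in $\operatorname{Fil}(k)$'' with ``$M_k^\varphi \to N_k^\varphi$ strict'' is not quite tautological (exactness in $\operatorname{Fil}(k)$ also requires $N_k^\varphi \to P_k^\varphi$ strict), but the weight-sum argument you give covers both implications simultaneously, so the proof of (3) is sound.
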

\begin{proof}
	Note that $M \rightarrow N$ is strict as a map of filtered modules. To see this suppose $m \in M \cap F^i N$, then $\varphi(m) \in \varphi(M) \cap u^iN \subset M[\frac{1}{u}] \cap u^iN$. Since $M \rightarrow N$ has $u$-torsion-free cokernel $M[\frac{1}{u}] \cap u^iN = u^iM$. Thus $m \in F^iM$. Similarly $M^\varphi \rightarrow N^\varphi$ is strict. Hence $N \rightarrow P$ is strict if and only if $0 \rightarrow \operatorname{gr}^i(M) \rightarrow \operatorname{gr}^i(N) \rightarrow \operatorname{gr}^i(P) \rightarrow 0$ is exact for each $i$ and likewise $N^\varphi \rightarrow P^\varphi$ is strict if and only if $0 \rightarrow \operatorname{gr}^i(M^\varphi) \rightarrow \operatorname{gr}^i(N^\varphi) \rightarrow \operatorname{gr}^i(P^\varphi) \rightarrow 0$ is exact (Lemma~\ref{fantasticstuff}).
	
	Using the second exact sequence of Remark~\ref{exactseqoffilteredstuff} we obtain the following commutative diagram with exact rows.
	$$
	\begin{tikzcd}[column sep =small, row sep=small]
	~& 0 \arrow{d} & 0 \arrow{d} & 0 \arrow{d} & \\
	0 \arrow{r} & \operatorname{gr}^{i-p}(M) \arrow{r}{u} \arrow{d} & \operatorname{gr}^{i}(M) \arrow{r} \arrow{d} & \operatorname{gr}^{i}(M_k) \arrow{r} \arrow{d} & 0 \\
	0 \arrow{r} & \operatorname{gr}^{i-p}(N) \arrow{r}{u} \arrow{d} & \operatorname{gr}^{i}(N) \arrow{r} \arrow{d} & \operatorname{gr}^{i}(N_k) \arrow{r} \arrow{d} & 0 \\
	0 \arrow{r} & \operatorname{gr}^{i-p}(P) \arrow{r}{u} \arrow{d} & \operatorname{gr}^{i}(P) \arrow{r} \arrow{d} & \operatorname{gr}^{i}(P_k) \arrow{r} \arrow{d} & 0 \\
	~ & 0 & 0 & 0 &
	\end{tikzcd}
	$$
	The previous paragraph shows that if $N \rightarrow P$ is strict then the left and middle columns are exact, and so the right column is exact also. Conversely if the right column is exact then one proves the middle column is exact by increasing induction on $i$ (for small enough $i$ the left column will be zero). This proves (1). The same argument but with the diagram replaced with the diagram obtained by considering the first exact sequence of Remark~\ref{exactseqoffilteredstuff} proves (2) also.
	
	It remains to show that if $M_k^\varphi \rightarrow N_k^\varphi$ or $N_k^\varphi \rightarrow P_k^\varphi$ is strict then $0 \rightarrow M_k^\varphi \rightarrow N_k^\varphi \rightarrow P_k^\varphi \rightarrow 0$ is exact. It suffices to show that $\sum_{i \in \operatorname{Weight}(M)} i + \sum_{i \in \operatorname{Weight}(P)} i = \sum_{i \in \operatorname{Weight}(N)} i$ after Corollary~\ref{exactsequencesandsums}. Remark~\ref{intermsofmatrices} says that $\sum_{i \in \operatorname{Weight}(M)} i$ equals the $u$-adic valuation of the determinant of $\varphi \colon M \rightarrow M[\frac{1}{u}]$ (in any choice of basis). Since this is clearly additive on exact sequences the lemma follows.
\end{proof}
\begin{lemma}\label{extSD}
	Let $0 \rightarrow M \rightarrow N \rightarrow P \rightarrow 0$ be an exact sequence in $\operatorname{Mod}^{\operatorname{BK}}_k$. Suppose $M$ and $P$ satisfy the equivalent conditions of Lemma~\ref{equivSD}.  If $N \rightarrow P$ is strict then $N$ satisfies the equivalent conditions of Lemma~\ref{equivSD} also.
\end{lemma}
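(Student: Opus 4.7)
The plan is to verify condition (1) of Lemma~\ref{equivSD} for $N$: that the $k$-semilinear bijection $N_k \to N^\varphi_k$ supplied by Lemma~\ref{filtrationMk} is an isomorphism of filtered modules. It is automatically a morphism of filtered modules, since $n \in F^iN$ means $\varphi(n) \in u^iN \cap N^\varphi = F^iN^\varphi$. Consequently, transporting the filtration on $N_k$ across $\varphi$ yields a filtration on $N^\varphi_k$ contained in the given one, and provided both are discrete (which holds because for any $Q \in \operatorname{Mod}^{\operatorname{BK}}_k$ one has $u^NQ \subset Q^\varphi$ for $N$ large enough, forcing $F^i Q_k = F^i Q^\varphi_k = 0$ for $i \gg 0$), Lemma~\ref{equalityisom} reduces the claim to the numerical identity
\[
\sum_{i} i\cdot\dim_k \operatorname{gr}^i(N_k) \;=\; \sum_{i} i\cdot\dim_k \operatorname{gr}^i(N^\varphi_k).
\]

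For the left hand side I would appeal to Lemma~\ref{N->Pstrict}(1): strictness of $N \to P$ produces a short exact sequence $0 \to M_k \to N_k \to P_k \to 0$ in $\operatorname{Fil}(k)$, so dimensions are additive on each graded piece. Since $M$ and $P$ satisfy the equivalent conditions of Lemma~\ref{equivSD}, the isomorphisms $M_k \cong M^\varphi_k$ and $P_k \cong P^\varphi_k$ in $\operatorname{Fil}(k)$ let me substitute and rewrite the left hand side as
\[
\sum_i i\cdot\dim_k\operatorname{gr}^i(M^\varphi_k) \;+\; \sum_i i\cdot\dim_k\operatorname{gr}^i(P^\varphi_k).
\]

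For the right hand side I would recycle the argument at the close of the proof of Lemma~\ref{N->Pstrict}: for any $Q \in \operatorname{Mod}^{\operatorname{BK}}_k$, $\sum_i i\cdot\dim_k \operatorname{gr}^i(Q^\varphi_k)$ equals the total of $\operatorname{Weight}(Q)$, which by Remark~\ref{intermsofmatrices} is the $u$-adic valuation of $\det(\varphi_Q)$ computed in any basis. Since determinants of $\varphi$ multiply along $0 \to M \to N \to P \to 0$, this quantity is additive, giving
\[
\sum_i i\cdot\dim_k\operatorname{gr}^i(N^\varphi_k) \;=\; \sum_i i\cdot\dim_k\operatorname{gr}^i(M^\varphi_k) + \sum_i i\cdot\dim_k\operatorname{gr}^i(P^\varphi_k),
\]
which matches the rewritten left hand side and completes the argument. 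I do not foresee a real obstacle; essentially all the ingredients already appear in Lemmas~\ref{equalityisom} and \ref{N->Pstrict}, and the one careful point is to distinguish between the intrinsic filtration on $N^\varphi_k$ and the one transported across $\varphi$ from $N_k$, so that Lemma~\ref{equalityisom} is applied to the correct pair of discrete filtrations.
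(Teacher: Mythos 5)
Your proof is correct, and it takes a genuinely different route from the paper's. The paper proceeds by a diagram chase on graded pieces: it stacks the sequence $\operatorname{gr}^i(M^\varphi_k)\to\operatorname{gr}^i(N^\varphi_k)\to\operatorname{gr}^i(P^\varphi_k)$ over $\operatorname{gr}^i(M_k)\to\operatorname{gr}^i(N_k)\to\operatorname{gr}^i(P_k)$, uses strictness of $N\to P$ (Lemma~\ref{N->Pstrict}(1)) to make the bottom row exact, deduces surjectivity of $\operatorname{gr}^i(N^\varphi_k)\to\operatorname{gr}^i(P^\varphi_k)$ from the outer vertical isomorphisms, invokes Lemma~\ref{N->Pstrict}(3) to conclude the top row is exact, and then reads off that the middle vertical arrow is an isomorphism. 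You instead reduce at once to the numerical criterion of Lemma~\ref{equalityisom} applied to the pair of discrete filtrations on $N^\varphi_k$ (the intrinsic one and the one transported from $N_k$), and verify the single identity $\sum i\dim_k\operatorname{gr}^i(N_k)=\sum i\dim_k\operatorname{gr}^i(N^\varphi_k)$ by computing each side additively; the left via Lemma~\ref{N->Pstrict}(1) and the hypothesis on $M,P$, the right via additivity of $v_u(\det\varphi)$. These are ultimately the same ingredients — Lemma~\ref{N->Pstrict}(3) in the paper itself rests on Corollary~\ref{exactsequencesandsums} and the determinant argument — but your version is a tidy unpacking that skips the intermediate strictness claim for $N^\varphi_k\to P^\varphi_k$. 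One small point worth tightening: to get $F^iQ_k=0$ for $i\gg 0$ it is cleanest to note that the filtered injection $Q_k\hookrightarrow Q^\varphi_k$ from Lemma~\ref{filtrationMk} sends $F^iQ_k$ into $F^iQ^\varphi_k$, and the latter vanishes for $i\gg 0$ because $u^NQ\subset Q^\varphi$ for $N\gg 0$ forces $Q^\varphi\cap u^iQ\subset uQ^\varphi$ once $i>N$; your phrasing suggests the vanishing of both follows directly from the lattice containment, which is true but passes through this extra step for $F^iQ_k$.
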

\begin{proof}
	Consider the following commutative diagram.
	$$
	\begin{tikzcd}[column sep =small, row sep=small]
	0 \arrow{r} & \operatorname{gr}^i(M_k^\varphi) \arrow{r} & \operatorname{gr}^i(N_k^\varphi) \arrow{r} & \operatorname{gr}^i(P_k^\varphi) \arrow{r} & 0 \\
	0 \arrow{r} & \operatorname{gr}^i(M_k) \arrow{r} \arrow{u}{\wr} & \operatorname{gr}^i(N_k) \arrow{r} \arrow{u} & \operatorname{gr}^i(P_k) \arrow{r} \arrow{u}{\wr} & 0
	\end{tikzcd}
	$$
	The left and right vertical arrows are isomorphisms by assumption. Since $N \rightarrow P$ is strict, part (1) of Lemma~\ref{N->Pstrict} implies the bottom row is exact. Thus $\operatorname{gr}^i(N_k^\varphi) \rightarrow \operatorname{gr}^i(P_k^\varphi)$ is surjective and so $N_k^\varphi \rightarrow P_k^\varphi$ is strict by Lemma~\ref{fantasticstuff}. Part (3) of Lemma~\ref{N->Pstrict} then implies the top row is exact. We conclude that $N_k \rightarrow N_k^\varphi$ is an isomorphism in $\operatorname{Fil}(k)$.
\end{proof}

\begin{lemma}\label{subquotisSD}
	Let $0 \rightarrow M \rightarrow N \rightarrow P \rightarrow 0$ be an exact sequence in $\operatorname{Mod}^{\operatorname{BK}}_k$. Suppose that $N$ satisfies the equivalent conditions of Lemma~\ref{equivSD} and that $M_k \rightarrow N_k$ is strict. Then $N \rightarrow P$ is strict and $M$ and $P$ also satisfy the equivalent conditions of Lemma~\ref{equivSD}.
\end{lemma}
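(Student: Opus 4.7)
The plan is to prove everything simultaneously through a dimension-count argument comparing, for $X \in \operatorname{Mod}^{\operatorname{BK}}_k$, the two invariants
$$
A(X) := \sum_i i \operatorname{dim}_k \operatorname{gr}^i(X_k), \qquad W(X) := \sum_i i \operatorname{dim}_k \operatorname{gr}^i(X_k^\varphi).
$$
(Both filtrations are discrete and finite-dimensional over $k$, as is easily seen by writing out $\varphi$ in a basis.) The semi-linear bijection $X_k \to X_k^\varphi$ of Lemma~\ref{filtrationMk} is a morphism of filtered $k$-vector spaces, so pulling back the filtration on the target along it yields a second, finer filtration on $X_k$; Lemma~\ref{equalityisom} then gives the universal inequality $A(X) \leq W(X)$, with equality if and only if $X$ satisfies the conditions of Lemma~\ref{equivSD}. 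Moreover $W$ is additive on short exact sequences in $\operatorname{Mod}^{\operatorname{BK}}_k$, since as noted at the end of the proof of Lemma~\ref{N->Pstrict}, $W(X)$ equals the $u$-adic valuation of $\det \varphi_X$ in any $k[[u]]$-basis.

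First I would observe that $0 \to M_k \to N_k \to P_k \to 0$ is exact as a sequence of $k$-vector spaces, since $P$ is $k[[u]]$-flat. Because $M_k \to N_k$ is assumed strict, Corollary~\ref{exactsequencesandsums} gives the inequality
$$
A(N) \leq A(M) + A(P).
$$
Combining the three inputs yields the chain
$$
A(N) \leq A(M) + A(P) \leq W(M) + W(P) = W(N) = A(N),
$$
where the last equality uses the hypothesis that $N$ satisfies the equivalent conditions of Lemma~\ref{equivSD}. Consequently equality must hold throughout.

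The equalities $A(M) = W(M)$ and $A(P) = W(P)$ are precisely the statement that $M$ and $P$ satisfy the equivalent conditions of Lemma~\ref{equivSD}. Meanwhile, $A(N) = A(M) + A(P)$ combined with strictness of $M_k \to N_k$ allows the converse direction of Corollary~\ref{exactsequencesandsums} to be invoked, giving that $0 \to M_k \to N_k \to P_k \to 0$ is exact in $\operatorname{Fil}(k)$. Lemma~\ref{N->Pstrict}(1) then delivers strictness of $N \to P$.

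The main obstacle — or rather the only nontrivial idea — is spotting the squeeze. Each of the three ingredients pushes a definite direction: additivity of $W$, the universal inequality $A \leq W$, and Corollary~\ref{exactsequencesandsums} under a one-sided strictness assumption. The hypothesis that $N$ is strongly divisible (in the sense of Lemma~\ref{equivSD}) closes the loop by turning the two ends of the chain into the same number. Once the sandwich is in place, both halves of the conclusion — the conditions on $M$ and $P$ and the strictness of $N \to P$ — collapse out at once, without any further calculation with $\operatorname{gr}$'s or explicit bases.
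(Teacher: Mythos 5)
Your argument is correct, and it takes a genuinely different route from the paper. The paper proves this lemma by a direct diagram chase: it considers the commuting squares relating $M_k, N_k$ to $M_k^\varphi, N_k^\varphi$ (and likewise for $N, P$), propagates strictness through them, and invokes Lemma~\ref{N->Pstrict}(3) to move between the $(\cdot)_k$ and $(\cdot)^\varphi_k$ rows. Your approach instead packages the relevant isomorphism condition into the scalar invariants $A(X)$ and $W(X)$: the observation that $A(X) \leq W(X)$ with equality characterizing the conditions of Lemma~\ref{equivSD} comes from applying Lemma~\ref{equalityisom} to the original filtration on $X_k$ versus the one pulled back along the semilinear bijection of Lemma~\ref{filtrationMk}; additivity of $W$ is the determinant computation; and Corollary~\ref{exactsequencesandsums} supplies the third link in the chain from the one strictness hypothesis you have. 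The squeeze then produces all three conclusions at once. This is a clean unification: the paper in fact uses the same determinant/additivity observation inside its proof of Lemma~\ref{N->Pstrict}(3), but your proof promotes it to the organizing principle rather than one step of a case analysis. One point worth making explicit if you were to write this up: you assert in passing that the filtration on $X_k$ is discrete, which you need for both Lemma~\ref{equalityisom} and Corollary~\ref{exactsequencesandsums}. This is true for every $X \in \operatorname{Mod}^{\operatorname{BK}}_k$, but deserves a line: $X_k^\varphi$ is visibly discrete because $X^\varphi$ and $X$ are commensurable lattices in $X[\tfrac{1}{u}]$, and since the bijection $X_k \to X_k^\varphi$ of Lemma~\ref{filtrationMk} is a morphism in $\operatorname{Fil}(k)$ it carries $F^iX_k$ into $F^iX_k^\varphi$, forcing $F^iX_k = 0$ once $F^iX_k^\varphi = 0$. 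With that noted, the argument is complete.
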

\begin{proof}
	The following diagram of objects in $\operatorname{Fil}(k)$ commutes.
	$$
	\begin{tikzcd}[column sep =small, row sep=small]
	M^\varphi_k \arrow{r}&  N^\varphi_k \\
	M_k \arrow{u} \arrow{r} & N_k \arrow{u}
	\end{tikzcd}
	$$
	As maps of $k$-vector spaces the horizontal arrows are injective and the vertical arrows are isomorphisms. By assumption the maps $M_k \rightarrow N_k$ and $N_k \rightarrow N^\varphi_k$ are strict. It follows that $M^\varphi_k \rightarrow N^\varphi_k$ and $M_k \rightarrow M^\varphi_k$ are strict also.
	
	The following is also a commutative diagram in $\operatorname{Fil}(k)$.
	$$
	\begin{tikzcd}[column sep =small, row sep=small]
	N^\varphi_k \arrow{r} & P^\varphi_k \\
	N_k \arrow{u} \arrow{r} & P_k \arrow{u}
	\end{tikzcd}
	$$
	As maps of $k$-vector spaces the vertical maps are isomorphisms and the horizontal arrows are surjections. By assumption the leftmost vertical arrow is strict. Using part (3) of Lemma~\ref{N->Pstrict}, $M_k^\varphi \rightarrow N^\varphi_k$ being strict implies $N^\varphi_k \rightarrow P^\varphi_k$ is strict. It follows that $P_k \rightarrow P^\varphi_k$ and $N_k \rightarrow P_k$ are strict. Thus $M$ and $P$ are as in Lemma~\ref{equivSD} and after (1) of Lemma~\ref{N->Pstrict} we know $N \rightarrow P$ is strict.
\end{proof}
\begin{lemma}\label{propertyP}
	Suppose $N$ is strongly divisible. If $0 \rightarrow M \rightarrow N \rightarrow P \rightarrow 0$ is an exact sequence in $\operatorname{Mod}^{\operatorname{BK}}_k$ then $M_k \rightarrow N_k$ is strict.
\end{lemma}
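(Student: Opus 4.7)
My plan is to exploit the strong divisibility of $N$ to pick a convenient basis and then adjust lifts of elements of $M_k \cap F^j N_k$ into $M$. Two preliminary remarks will set the stage. First, because $P = N/M$ lies in $\operatorname{Mod}^{\operatorname{BK}}_k$ it is free over $k[[u]]$, hence $u$-torsion-free; thus $uN \cap M = uM$ and $N \cap M[\tfrac{1}{u}] = M$, and consequently the BK-filtration on $M$ will coincide with the subspace filtration from $N$, i.e.\ $F^j M = M \cap F^j N$. Second, via Lemma~\ref{equivSD}(2) I will choose a $k[[u]]$-basis $(f_i)$ of $N$ and integers $r_i \in [0,p]$ so that $(u^{r_i}f_i)$ is a $k[[u^p]]$-basis of $\varphi(N)$, together with the unique preimages $g_i \in N$ of $u^{r_i}f_i$ under $\varphi$. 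As in the proof of Lemma~\ref{equivSD} this will give $F^j N = \sum_i u^{\max(j-r_i,0)} k[[u]] g_i$, and hence $F^j N_k = \bigoplus_{r_i \geq j} k\,\bar g_i$.

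For $j > p$ the bound $r_i \leq p$ forces $F^j N \subset uN$ and so $F^j N_k = 0$, leaving nothing to check. In the main range $j \leq p$ I would take $\bar m \in M_k \cap F^j N_k$ with lift $m \in M$, expand $\bar m = \sum_{r_i \geq j} c_i \bar g_i$, and set $m' = \sum_{r_i \geq j} c_i g_i \in F^j N$, so that $m \equiv m' \pmod{uN}$. Projecting to $P$, the image of $m'$ will lie in $uP$ because that of $m$ vanishes, so I may write $m' = m_2 + u q$ with $m_2 \in M$ and $q \in N$. Since $m - m_2 = (m-m') + u q \in uN \cap M = uM$, the element $m_2$ will have the same class in $M_k$ as $m$.

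It remains to verify $m_2 \in F^j M = M \cap F^j N$. I would compute
\[
\varphi(m_2) = \varphi(m') - u^p \varphi(q),
\]
where $\varphi(m') \in u^j N$ by construction, while expanding $\varphi(q) \in \varphi(N)$ in the $k[[u^p]]$-basis $(u^{r_i}f_i)$ shows $u^p \varphi(q) \in u^p N \subset u^j N$, using $r_i \geq 0$ together with $j \leq p$.

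The hard part of the argument, and the reason the strong divisibility bounds enter twice, is this final correction step. The error term $u^p \varphi(q)$ a priori lives only in $u^p N^\varphi \subset N[\tfrac{1}{u}]$, so its membership in $u^j N$ relies on both halves of the weight bound $\operatorname{Weight}(N) \subset [0,p]$: non-negativity of the $r_i$ keeps $u^p \varphi(q)$ inside $N$, and $j \leq p$ upgrades this to $u^j N$. Neither bound can be weakened without breaking the argument.
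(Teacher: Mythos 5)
Your proof is correct, but it takes a genuinely different route from the paper's. The paper argues homologically: starting from the commutative diagram of exact sequences in Remark~\ref{exactseqoffilteredstuff}, it uses the strictness of $M \rightarrow N$ (established in the proof of Lemma~\ref{N->Pstrict}) to see that $\operatorname{gr}^i(M) \rightarrow \operatorname{gr}^i(N)$ is injective, and then deduces injectivity of $\operatorname{gr}^i(M_k) \rightarrow \operatorname{gr}^i(N_k)$ by splitting into $i<p$ (showing $\operatorname{gr}^i(M)=\operatorname{gr}^i(M_k)$ and $\operatorname{gr}^i(N)=\operatorname{gr}^i(N_k)$, ultimately because $\operatorname{gr}^i(N)=0$ for $i<0$, which is where $r_i\geq 0$ enters) and $i\geq p$ (showing $F^iN_k=0$ for $i>p$, which is where $r_i\leq p$ enters). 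Your argument is more concrete: you pick an adapted basis $(g_i)$ of $N$ with $\varphi(g_i)=u^{r_i}f_i$, and given $\bar m \in M_k \cap F^jN_k$ you correct a lift $m\in M$ by an element of $uM$ to land in $F^jM$, reading off directly that $\varphi(m_2)=\varphi(m')-u^p\varphi(q)\in u^jN$. Both proofs use the bound $\operatorname{Weight}(N)\subset[0,p]$ in both directions, and yours isolates exactly where: $r_i\geq 0$ gives $\varphi(q)\in N$, and $r_i\leq p$ forces $F^jN_k=0$ for $j>p$ so that the remaining range $j\leq p$ gives $u^p\varphi(q)\in u^jN$. What the paper's version buys is that it fits uniformly into the surrounding graded-module formalism (Lemmas~\ref{caenstuff}--\ref{fantasticstuff}); what yours buys is an explicit description of the correction, which makes the role of each inequality transparent.

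One small slip worth flagging, though it does not affect the argument: you wrote $F^jN=\sum_i u^{\max(j-r_i,0)}k[[u]]g_i$, but since $\varphi$ is $p$-semilinear (it sends $u\mapsto u^p$) the correct formula is $F^jN=\sum_i u^{\lceil\max(j-r_i,0)/p\rceil}k[[u]]g_i$; the formula you quoted is the one for $F^jN^\varphi$ in the basis $(u^{r_i}f_i)$, as in the proof of Lemma~\ref{equivSD}. The consequence you actually use, namely $F^jN_k=\bigoplus_{r_i\geq j}k\,\bar g_i$, remains correct because $\lceil\max(j-r_i,0)/p\rceil=0$ if and only if $r_i\geq j$, and the containments $g_i\in F^{r_i}N$ and $\varphi(m')\in u^jN$ that your correction step relies on you verify directly.
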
 
\begin{proof}
	We have a commutative diagram with exact rows (Remark~\ref{exactseqoffilteredstuff})
	$$
	\begin{tikzcd}[column sep =small, row sep=small]
	0 \arrow{r} & \operatorname{gr}^{i-p}(M) \arrow{r} \arrow{d} & \operatorname{gr}^{i}(M) \arrow{r} \arrow{d} & \operatorname{gr}^i(M_k) \arrow{r} \arrow{d}{\alpha} & 0 \\
	0 \arrow{r} & \operatorname{gr}^{i-p}(N) \arrow{r} & \operatorname{gr}^{i}(N) \arrow{r} & \operatorname{gr}^i(N_k) \arrow{r} & 0
	\end{tikzcd}
	$$
	One knows that $M \rightarrow N$ is strict (as was shown in the first paragraph of the proof of Lemma~\ref{N->Pstrict}) so the left and middle vertical arrows are injective by Lemma~\ref{fantasticstuff}. We have to show $\alpha$ is injective for every $i$.
	
	For injectivity of $\alpha$ when $i< p$ we argue as follows. As $\operatorname{Weight}(N) \subset [0, p]$, and because $N_k \cong N^\varphi_k$, we have $\operatorname{gr}^i(N_k) = 0$ for $i<0$. Hence $\operatorname{gr}^i(N) = \operatorname{gr}^{i-p}(N)$ for $i<0$. This implies $\operatorname{gr}^i(N) = 0$ for $i<0$ because for small enough $i$, $F^i N = N$. Using the diagram we deduce that $\operatorname{gr}^i(M) = 0$ for $i<0$ also, and that for $i<p$ we have $\operatorname{gr}^i(M) = \operatorname{gr}^i(M_k)$ and $\operatorname{gr}^i(N) = \operatorname{gr}^i(N_k)$. This proves $\alpha$ is injective when $i<p$. 
	
	For injectivity of $\alpha$ when $i \geq  p$ it suffices to show $F^i N_k = 0$ for $i> p$ (because then $F^iM_k = 0$ for $i>p$ so $\alpha$ is just the zero map when $i>p$ and when $i=p$, $\alpha$ is the inclusion $F^iM_k \rightarrow F^iN_k$). Let us prove this is the case. Since $\operatorname{Weight}(N) \subset [0, p]$ we have $\operatorname{gr}^i(N_k) = 0$ for $i> p$; it suffices to show $F^i N_k =0$ for $i>>p$. But $N_k$ is both Hausdorff (being a quotient of $N$, which is Hausdorff) and a finite dimensional $k$-vector space, this forces $F^i N_k$ to vanish for large $i$. So we are done.
\end{proof}

Putting all this together we deduce the following.
\begin{proposition}\label{SDsubquot}
	Let $0 \rightarrow M \rightarrow N \rightarrow P \rightarrow 0$ be an exact sequence in $\operatorname{Mod}^{\operatorname{BK}}_k$. 
	\begin{enumerate}
		\item If $N \in \operatorname{Mod}^{\operatorname{SD}}_k$ then $M$ and $P$ are strongly divisible and the sequence 
		$$
		0 \rightarrow M^\varphi_k \rightarrow N^\varphi_k \rightarrow P^\varphi_k \rightarrow 0
		$$
		is exact in $\operatorname{Fil}(k)$. Thus $\operatorname{Weight}(N) = \operatorname{Weight}(M) \cup \operatorname{Weight}(P)$. 
		\item If $P, M \in \operatorname{Mod}^{\operatorname{SD}}_k$ then $N \in \operatorname{Mod}^{\operatorname{SD}}_k$ if and only if $N \rightarrow P$ is strict.
	\end{enumerate}
	\begin{proof}
		(1) follows from Lemma~\ref{N->Pstrict}, Lemma~\ref{subquotisSD} and Lemma~\ref{propertyP}. For (2) use Lemma~\ref{extSD}.
	\end{proof}
\end{proposition}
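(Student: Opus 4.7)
The plan is to chain the three preceding lemmas together, since each has been formulated with this exact assembly in mind; essentially no new ideas are needed, only bookkeeping about which direction each lemma is applied.

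For part (1), the starting point is Lemma~\ref{propertyP}, which directly tells us that the hypothesis $N \in \operatorname{Mod}^{\operatorname{SD}}_k$ forces $M_k \to N_k$ to be strict. Feeding this into Lemma~\ref{subquotisSD} then yields both that $N \to P$ is strict and that $M$ and $P$ satisfy the equivalent conditions of Lemma~\ref{equivSD}. To upgrade this last fact to strong divisibility we need the weight bounds $\operatorname{Weight}(M), \operatorname{Weight}(P) \subset [0,p]$, and for this we want the exactness of $0 \to M^\varphi_k \to N^\varphi_k \to P^\varphi_k \to 0$ in $\operatorname{Fil}(k)$. Strictness of $M_k \to N_k$, together with the isomorphisms $M_k \cong M^\varphi_k$ and $N_k \cong N^\varphi_k$ in $\operatorname{Fil}(k)$ from strong divisibility of $M$ and $N$, shows $M^\varphi_k \to N^\varphi_k$ is strict; part (3) of Lemma~\ref{N->Pstrict} then gives the desired exact sequence in $\operatorname{Fil}(k)$. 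Reading off dimensions of graded pieces yields $\operatorname{Weight}(N) = \operatorname{Weight}(M) \cup \operatorname{Weight}(P)$, whence the weight bound on $N$ transfers to $M$ and $P$.

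For part (2), the ``only if'' direction is immediate from (1). For ``if'', assume $M, P \in \operatorname{Mod}^{\operatorname{SD}}_k$ and $N \to P$ is strict. Lemma~\ref{extSD} then gives that $N$ satisfies the equivalent conditions of Lemma~\ref{equivSD}. It remains to check $\operatorname{Weight}(N) \subset [0,p]$, which again follows from the additivity of weights along short exact sequences: either by repeating the graded-piece argument from part (1) (using strictness of $N \to P$ and of $N_k \to N_k^\varphi$ to run Lemma~\ref{N->Pstrict} in reverse), or more directly by the observation at the end of the proof of Lemma~\ref{N->Pstrict} that $\sum_i i$ over $\operatorname{Weight}(\cdot)$ equals the $u$-adic valuation of the determinant of $\varphi$, which is obviously additive in exact sequences of $\mathfrak{S}$-modules.

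There is no real obstacle: the content of the proposition has been absorbed into the three lemmas, so the only slightly delicate point is being careful about which strictness statement is needed where (strictness of $N \to P$ versus strictness of $N^\varphi \to P^\varphi$ versus strictness of $M_k \to N_k$, etc.), and using the isomorphisms $(-)_k \cong (-)_k^\varphi$ on the strongly divisible factors to translate between them.
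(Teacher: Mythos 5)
Your proof is correct and follows the same path the paper intends: for (1), Lemma~\ref{propertyP} gives strictness of $M_k \to N_k$, Lemma~\ref{subquotisSD} upgrades this to strictness of $N \to P$ and to $M,P$ satisfying the conditions of Lemma~\ref{equivSD}, the isomorphisms $(-)_k \cong (-)_k^\varphi$ transfer strictness to $M_k^\varphi \to N_k^\varphi$, and Lemma~\ref{N->Pstrict}(3) then gives the filtered exact sequence, from which the weight statement and the bound $\operatorname{Weight}(M),\operatorname{Weight}(P) \subset [0,p]$ follow; and for (2), Lemma~\ref{extSD} plus the same graded-piece argument.

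One small caution on part (2): the ``or more directly'' alternative you offer — that $\sum_i i$ over $\operatorname{Weight}(\cdot)$ equals $v_u(\det\varphi)$ and is additive — does not by itself show $\operatorname{Weight}(N)\subset[0,p]$; equality of sums is compatible with, say, $\operatorname{Weight}(N)$ containing $-1$ and $p+1$. That additivity is only the numerical input to Corollary~\ref{exactsequencesandsums}, which (together with the strictness of $N\to P$) is what actually forces the multiset equality $\operatorname{Weight}(N)=\operatorname{Weight}(M)\cup\operatorname{Weight}(P)$, whence the bound. Your primary route via the graded-piece exact sequence is the correct one to lean on. A second, purely cosmetic point: in part (1) you invoke ``strong divisibility of $M$'' to get $M_k\cong M_k^\varphi$ before the weight bound on $M$ is established; what you are actually using there is just that $M$ satisfies the equivalent conditions of Lemma~\ref{equivSD}, which is all the isomorphism requires.
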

\begin{proposition}\label{SDsubquotO}
	Let $0 \rightarrow M \rightarrow N \rightarrow P \rightarrow 0$ be an exact sequence in $\operatorname{Mod}^{\operatorname{BK}}_k(\mathcal{O})$. 
	\begin{enumerate}
		\item If $N \in \operatorname{Mod}^{\operatorname{SD}}_k(\mathcal{O})$ then $M$ and $P$ are both strongly divisible and for each $\tau \in \operatorname{Hom}_{\mathbb{F}_p}(k,\mathbb{F})$ we have $\operatorname{Weight}_\tau(N) = 
		\operatorname{Weight}_\tau(M) \cup \operatorname{Weight}_\tau(P)$.
		\item If $M,P \in \operatorname{Mod}^{\operatorname{SD}}_k(\mathcal{O})$ then $N \in \operatorname{Mod}^{\operatorname{SD}}_k(\mathcal{O})$ if and only if $N \rightarrow P$ is strict.
	\end{enumerate}
\end{proposition}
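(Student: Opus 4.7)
The plan is to reduce both statements to the non-coefficient version Proposition~\ref{SDsubquot} by forgetting the $\mathcal{O}$-action, and then recover the $\tau$-refinement of the weight decomposition via the idempotent splitting $M = \prod_\tau M_\tau$ coming from Construction~\ref{O-semilinear}.

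For part (2), I would observe that by the very definition of $\operatorname{Mod}^{\operatorname{SD}}_k(\mathcal{O})$ an object is strongly divisible exactly when it is so as an object of $\operatorname{Mod}^{\operatorname{BK}}_k$; the claim is then immediate from Proposition~\ref{SDsubquot}(2) applied to the same exact sequence viewed in $\operatorname{Mod}^{\operatorname{BK}}_k$. For part (1), I would similarly invoke Proposition~\ref{SDsubquot}(1) to conclude that $M$ and $P$ are strongly divisible as objects of $\operatorname{Mod}^{\operatorname{BK}}_k$ (hence of $\operatorname{Mod}^{\operatorname{BK}}_k(\mathcal{O})$) and that
$$
0 \rightarrow M_k^\varphi \rightarrow N_k^\varphi \rightarrow P_k^\varphi \rightarrow 0
$$
is exact in $\operatorname{Fil}(k)$. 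This recovers the union-of-multisets statement at the level of $\operatorname{Weight}(-)$, but still needs to be refined to each $\operatorname{Weight}_\tau(-)$.

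For the $\tau$-refinement, I would use that the idempotents $\widetilde{e}_\tau \in \mathfrak{S} \otimes_{\mathbb{Z}_p} \mathcal{O}$ are central and act by $\mathfrak{S}$-linear endomorphisms, so applying them to the exact sequence gives exact sequences $0 \rightarrow M_\tau \rightarrow N_\tau \rightarrow P_\tau \rightarrow 0$ of $\mathbb{F}[[u]]$-modules. Because the filtration on $M^\varphi$ is defined as $F^i M^\varphi = M^\varphi \cap u^i M$, it is preserved by the $\mathcal{O}$-action and commutes with the $\widetilde{e}_\tau$-decomposition; the same holds for the quotient filtration on $M^\varphi_k$. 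Hence the displayed exact sequence above decomposes as
$$
0 \rightarrow M_{k,\tau}^\varphi \rightarrow N_{k,\tau}^\varphi \rightarrow P_{k,\tau}^\varphi \rightarrow 0
$$
exact in $\operatorname{Fil}(\mathbb{F})$, and counting $\mathbb{F}$-dimensions of graded pieces gives $\operatorname{Weight}_\tau(N) = \operatorname{Weight}_\tau(M) \cup \operatorname{Weight}_\tau(P)$.

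I do not expect any serious obstacle: the main work has been done in Proposition~\ref{SDsubquot}, and the only thing to verify is the compatibility of the $\widetilde{e}_\tau$-decomposition with the filtrations, which is immediate from the definitions. The only small care needed is to note that the definitions of $\operatorname{Mod}^{\operatorname{SD}}_k(\mathcal{O})$ and of $\operatorname{Weight}_\tau$ are both built so as to be controlled by the underlying $\operatorname{Mod}^{\operatorname{BK}}_k$-structure together with the $\tau$-factors, so no further structural arguments beyond Proposition~\ref{SDsubquot} are needed.
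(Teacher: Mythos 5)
Your proposal is correct and follows the same route as the paper: reduce to Proposition~\ref{SDsubquot} and then use that the exact sequence of $k$-vector spaces from part (1) is compatible with the $k \otimes_{\mathbb{F}_p} \mathbb{F}$-module structure (equivalently, with the idempotent decomposition), so it splits into $\tau$-components $0 \rightarrow M_{k,\tau}^\varphi \rightarrow N_{k,\tau}^\varphi \rightarrow P_{k,\tau}^\varphi \rightarrow 0$. The paper phrases the key compatibility as ``functoriality'' of the exact sequence, while you spell it out via centrality of the idempotents and $\mathcal{O}$-stability of the filtrations, but the content is identical.
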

\begin{proof}
	This is immediate from Proposition~\ref{SDsubquot}. In particular we point out that the exact sequence in (1) of Proposition~\ref{SDsubquot} is functorial and so is an exact sequence of $k \otimes_{\mathbb{F}_p} \mathbb{F}$-modules. Thus it decomposes into exact sequences
	$$
	0 \rightarrow M_{k,\tau}^\varphi \rightarrow N_{k,\tau}^\varphi \rightarrow P_{k,\tau}^\varphi \rightarrow 0
	$$ 
	which shows $\operatorname{Weight}_\tau(N) = \operatorname{Weight}_\tau(M) \cup \operatorname{Weight}_\tau(P)$.
\end{proof}
\section{Irreducible objects}\label{irreducible}

Provided $\mathbb{F}$ is sufficiently large, irreducible $\mathbb{F}$-representations of $G_K$ and $G_{K_\infty}$ are induced from characters (Lemma~\ref{irred}). In this section and the next we investigate the extent with which this is true for objects of $\operatorname{Mod}^{\operatorname{SD}}_k(\mathcal{O})$. Throughout assume $k \subset \mathbb{F}$.
\subsection{Rank ones}
Recall from Construction~\ref{O-semilinear} how $\mathfrak{S} \otimes_{\mathbb{Z}_p} \mathcal{O}$ is made into an $\mathcal{O}[[u]]$-algebra. Then $k[[u]] \otimes_{\mathbb{F}_p} \mathbb{F}$ becomes an $\mathbb{F}[[u]]$-algebra. Also let $e_\tau \in k[[u]] \otimes_{\mathbb{F}_p} \mathbb{F}$ denote the image of the idempotent $\widetilde{e}_\tau \in \mathfrak{S} \otimes_{\mathbb{Z}_p} \mathcal{O}$ defined in Construction~\ref{O-semilinear}. Thus $\varphi(e_{\tau \circ \varphi}) = e_\tau$.

The next lemma is proven by an easy change of basis argument (see \cite[Lemma 6.2]{GLS})
\begin{lemma}\label{rankone}
	Fix $\tau_0 \in \operatorname{Hom}_{\mathbb{F}_p}(k,\mathbb{F})$. Let $M \in \operatorname{Mod}^{\operatorname{BK}}_k(\mathcal{O})$ be of rank one over $k[[u]] \otimes_{\mathbb{F}_p} \mathbb{F}$. Then $M$ is isomorphic to a Breuil--Kisin module
	$$
	N = k[[u]] \otimes_{\mathbb{F}_p} \mathbb{F}, \qquad \varphi_N(1) = (x)\sum u^{r_\tau} e_\tau
	$$
	where $r_\tau \in \mathbb{Z}$ and where $(x) = xe_{\tau_0} + \sum_{\tau \neq \tau_0} e_\tau$ for some $x \in \mathbb{F}^\times$.
\end{lemma}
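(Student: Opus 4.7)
The plan is to exploit the product decomposition $M = \prod_{\tau \in \operatorname{Hom}_{\mathbb{F}_p}(k,\mathbb{F})} M_\tau$ afforded by Construction~\ref{O-semilinear}, where each $M_\tau$ is free of rank one over $\mathbb{F}[[u]]$. I would fix an $\mathbb{F}[[u]]$-basis $f_\tau$ of each $M_\tau$; then by \eqref{Olalal} Frobenius is encoded by units $a_\tau \in \mathbb{F}((u))^\times$ via $\varphi_M(f_{\tau \circ \varphi}) = a_\tau f_\tau$. Writing $a_\tau = u^{r_\tau}\beta_\tau$ with $\beta_\tau \in \mathbb{F}[[u]]^\times$ pins down the integers $r_\tau$; they are intrinsic to $M$ since they are the $u$-adic valuations of the $a_\tau$ and change of basis by units preserves these.

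Next I would analyse the effect of the basis change $f_\tau \mapsto \alpha_\tau f_\tau$ for arbitrary $\alpha_\tau \in \mathbb{F}[[u]]^\times$. Because $\varphi$ on the $\tau$-component of $\mathfrak{S}_\mathcal{O}$ acts by $u \mapsto u^p$ and fixes coefficients, writing $\alpha^{(p)}(u) := \alpha(u^p)$, the unit $\beta_\tau$ is replaced by $\alpha_{\tau \circ \varphi}^{(p)}\,\beta_\tau\,\alpha_\tau^{-1}$. The task thus becomes: choose $\alpha_\tau$ so that these new units equal $1$ for $\tau \neq \tau_0$ and a constant $x \in \mathbb{F}^\times$ at $\tau_0$.

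Traversing the embeddings in order $\tau_0, \tau_0 \circ \varphi^{-1}, \ldots, \tau_0 \circ \varphi^{-(f-1)}$ with $f = [k:\mathbb{F}_p]$, the $f-1$ conditions at $\tau \neq \tau_0$ recursively determine each $\alpha_\tau$ from $\alpha_{\tau_0}$. Substituting back into the remaining equation at $\tau_0$ reduces the problem to finding $\alpha_{\tau_0} \in \mathbb{F}[[u]]^\times$ and $x \in \mathbb{F}^\times$ solving
$$x\,\alpha_{\tau_0} = B\,\alpha_{\tau_0}^{(p^f)}$$
for a single unit $B \in \mathbb{F}[[u]]^\times$ assembled from the $\beta_\tau$ and their $\varphi$-iterates. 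Expanding $\alpha_{\tau_0} = \sum a_n u^n$ and $B = \sum b_n u^n$, comparison of constant terms forces $x = b_0 \in \mathbb{F}^\times$; for $n \geq 1$ the coefficient $a_n$ is then expressible in terms of $b_n$ and coefficients $a_m$ with $m \leq \lfloor n/p^f \rfloor$, so the equation can be solved by a triangular recursion after fixing $a_0 \in \mathbb{F}^\times$ arbitrarily.

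There is no serious obstacle here: the key point is that the substitution $u \mapsto u^{p^f}$ raises valuations, rendering the recursion upper-triangular, so one really is performing nothing more than an explicit change of basis of the sort carried out in \cite[Lemma 6.2]{GLS}. The isomorphism of $M$ with the prescribed $N$ is then the $\mathfrak{S}_\mathcal{O}$-linear map sending $1$ to $(\alpha_\tau f_\tau)_\tau$.
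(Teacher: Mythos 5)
Your proposal is correct and takes the same route the paper alludes to: the paper's "proof" is the single sentence that this follows by an easy change of basis argument, citing \cite[Lemma~6.2]{GLS}, and your write-up supplies exactly that change-of-basis computation — the decomposition $M=\prod_\tau M_\tau$, the semilinear transformation law $\beta_\tau \mapsto \alpha_{\tau\circ\varphi}^{(p)}\beta_\tau\alpha_\tau^{-1}$, the elimination of all but one unit, and the solvability of $x\alpha_{\tau_0}=B\alpha_{\tau_0}^{(p^f)}$ by triangular recursion. All the steps check out (including the key point that $m\le\lfloor n/p^f\rfloor<n$ for $n\ge 1$ makes the recursion well-founded).
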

\begin{remark}
	If $N$ is as in Lemma~\ref{rankone} then $\operatorname{Weight}_\tau(N) = \lbrace r_\tau \rbrace$. Note also that $N$ satisfies the equivalent conditions of Lemma~\ref{equivSDO}. Thus $N \in \operatorname{Mod}^{\operatorname{SD}}_k(\mathcal{O})$ if and only if $r_{\tau} \in [0,p]$.
\end{remark}
\begin{proposition}\label{rankonesapplyT}
	If $N$ is as in Lemma~\ref{rankone} then the $G_{K_\infty}$-action on $T(N)$ is through the restriction to $G_{K_\infty}$ of the character
	$$
	\psi_x\prod_{\tau} \omega_\tau^{-r_\tau}
	$$
	Here $\psi_x$ denotes the unramified character sending the geometric Frobenius to $x$, and the $\omega_\tau$ are the characters defined in the paragraph after the proof of Lemma~\ref{uniquertheta}.
\end{proposition}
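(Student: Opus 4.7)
The plan is to solve the fixed-point equation $\varphi v = v$ inside $N \otimes_{\mathfrak{S}} W(C^\flat) = N \otimes_{k[[u]]} C^\flat$ explicitly, and then read off the $G_{K_\infty}$-action on the resulting generator. Since $T(N)$ is rank one over $\mathbb{F}$ the action is a character $\chi\colon G_{K_\infty} \to \mathbb{F}^\times$, so it suffices to compute $\iota \circ \chi$ for a single embedding $\iota\colon \mathbb{F} \hookrightarrow C^\flat$. Via the decomposition $\mathbb{F} \otimes_{\mathbb{F}_p} C^\flat \cong \prod_\iota C^\flat$, any $v \in N \otimes_{k[[u]]} C^\flat$ decomposes as a tuple $(v_\iota)_\iota$ with $v_\iota \in C^\flat$.

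Unpacking $\varphi_N(1) = (x)\sum_\tau u^{r_\tau} e_\tau$, and observing that $\varphi$ carries the $\tau \circ \varphi$-piece of $N$ into the $\tau$-piece via multiplication by $x_\tau u^{r_\tau}$ (where $x_{\tau_0} = x$ and $x_\tau = 1$ for $\tau \neq \tau_0$), the condition $\varphi v = v$ becomes the recursion
\[
v_\iota \;=\; \iota\bigl(x_{\iota|_k}\bigr)\,(\pi^\flat)^{r_{\iota|_k}}\,v_{\iota \circ \varphi}^{\,p}.
\]
Iterating $d = [\mathbb{F} : \mathbb{F}_p]$ times (using $\varphi^d = \operatorname{id}_\mathbb{F}$) collapses this to a single scalar equation $v_\iota^{p^d - 1} = Y_\iota^{-1}(\pi^\flat)^{-R_\iota}$, with $R_\iota = \sum_{j=0}^{d-1} p^j r_{(\iota \circ \varphi^j)|_k}$ an integer and $Y_\iota \in \overline{\mathbb{F}}_p^\times$ an explicit product involving the images $(\iota \circ \varphi^j)(x)$. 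Since $C^\flat$ is algebraically closed a solution exists of the form $v_\iota = y \cdot (\pi^\flat)^{-R_\iota/(p^d - 1)}$ with $y \in \overline{\mathbb{F}}_p^\times$ satisfying $y^{p^d - 1} = Y_\iota^{-1}$.

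For $\sigma \in G_{K_\infty}$ we have $\sigma(\pi^\flat) = \pi^\flat$ and $\sigma$ acts on $\overline{\mathbb{F}}_p \subset C^\flat$ through its image in $G_k$. The factor $y$ contributes the unramified piece: the nontrivial terms in $Y_\iota$ are exactly those with $(\iota \circ \varphi^j)|_k = \tau_0$, and grouping these along the Frobenius orbit of $\tau_0$ in $\operatorname{Hom}_{\mathbb{F}_p}(k,\mathbb{F})$ identifies the contribution with $\iota \circ \psi_x$. The factor $(\pi^\flat)^{-R_\iota/(p^d-1)}$ contributes the tame piece via the identity
\[
\sigma\bigl((\pi^\flat)^{1/(p^{f_0}-1)}\bigr)\big/(\pi^\flat)^{1/(p^{f_0}-1)} \;=\; \omega_k(\sigma)
\]
(where $f_0 = [k:\mathbb{F}_p]$), together with the geometric-series identity $\sum_{i=0}^{[\mathbb{F}:k]-1} p^{i f_0} = (p^d - 1)/(p^{f_0} - 1)$, which rewrites $R_\iota/(p^d - 1) = \sum_\tau r_\tau p^{j_\tau}/(p^{f_0} - 1)$ with $j_\tau \in [0, f_0)$ the unique residue such that $(\iota \circ \varphi^{j_\tau})|_k = \tau$; this matches $\prod_\tau \omega_\tau^{-r_\tau}$ after composing with $\iota$ and using $\omega_\tau = \tau \circ \omega_k$.

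The main obstacle is the combinatorial bookkeeping: confirming that the $d$-fold iteration packages cleanly into a product indexed by $\operatorname{Hom}_{\mathbb{F}_p}(k,\mathbb{F})$ with exactly the right exponents and without stray Frobenius twists between $\iota$ and the canonical embedding $k \hookrightarrow C^\flat$ used to define $\omega_k$.
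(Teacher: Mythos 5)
Your approach is genuinely different from the paper's: the paper disposes of this proposition by citing \cite[Proposition 6.7]{GLS} directly (adjusting for the fact that Gee--Liu--Savitt use a contravariant functor, whence the inverse on the character), whereas you propose a self-contained computation by solving $\varphi v = v$ inside $N[\frac{1}{u}] \otimes_{k((u))} C^\flat$ after decomposing over the embeddings $\iota\colon \mathbb{F}\hookrightarrow \overline{\mathbb{F}}_p$. This is a perfectly viable route and makes the dependence on $\pi^\flat$ and on the choice of canonical extension $\omega_k$ of the fundamental character transparent; what it costs is the careful bookkeeping you rightly flag at the end. Two points deserve attention before this can be called a proof. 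First, the recursion direction: writing $c = \sum_\iota c_\iota \epsilon_\iota$ and noting $\varphi(\epsilon_\iota)=\epsilon_{\iota\circ\varphi}$, one finds $(\varphi c)_\iota = c_{\iota\circ\varphi^{-1}}^p$, so the fixed-point equation reads $c_\iota = c_{\iota\circ\varphi^{-1}}^p\cdot(\text{factor}_\iota)$ rather than involving $c_{\iota\circ\varphi}^p$ as you wrote; since $\varphi$ has order $d$ on $\mathbb{F}$ this is only a reindexing of the eventual $d$-fold product, but if carried through inconsistently it inverts the final character. Second, the index $r_{\iota|_k}$ should really be $r_{\tau(\iota)}$ where $\tau(\iota)\in\operatorname{Hom}_{\mathbb{F}_p}(k,\mathbb{F})$ is the unique $\tau$ with $\iota\circ\tau$ equal to the canonical embedding $k\hookrightarrow \overline{\mathbb{F}}_p$ used to define $\omega_k$; this is the $\tau$ for which $p_\iota(e_\tau)\neq 0$, and confusing it with the literal restriction $\iota|_k\in\operatorname{Hom}_{\mathbb{F}_p}(k,\overline{\mathbb{F}}_p)$ is exactly the sort of stray Frobenius twist that must be eliminated when matching against $\prod_\tau\omega_\tau^{-r_\tau}$. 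With both points fixed the geometric-series manipulation you describe does land on the advertised character, including the unramified factor $\psi_x$ coming from the nontrivial $Y_\iota$-contributions along the orbit of $\tau_0$.
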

\begin{proof}
	This is \cite[Proposition 6.7]{GLS}. However note that in \emph{loc. cit.} they contravariantly associate a $G_{K_\infty}$-representation to Breuil--Kisin module; this is why the character appearing here is the inverse of that in \emph{loc. cit}.
\end{proof}
\subsection{Induction and restriction}
\begin{notation}\label{restrictionUNRAM}
	Let $L/K$ be the unramified extension corresponding to a finite extension $l/k$, and let $L_\infty = K_\infty L$. Set $\mathfrak{S}_L = W(l)[[u]]$. Extension of scalars along the inclusion $f\colon \mathfrak{S} \rightarrow \mathfrak{S}_L$ describes a functor
	$$
	f^*\colon \operatorname{Mod}^{\operatorname{BK}}_K \rightarrow \operatorname{Mod}^{\operatorname{BK}}_L
	$$
	For $M \in \operatorname{Mod}^{\operatorname{BK}}_K$ the module $f^*M = M \otimes_{\mathfrak{S}} \mathfrak{S}_L$ is made into a Breuil--Kisin module via the semilinear map $m \otimes s \mapsto \varphi_M(m)\otimes \varphi(s)$; this map induces the isomorphism
	$$
	(\varphi^*f^*M)[\tfrac{1}{E}] = (f^*\varphi^*M)[\tfrac{1}{E}] = f^*(\varphi^*M[\tfrac{1}{E}]) \xrightarrow{f^*\varphi_M} f^*(M[\tfrac{1}{E}]) = (f^*M)[\tfrac{1}{E}]
	$$
	where the first $=$ comes from the fact that $\varphi \circ f = f \circ \varphi$. The natural isomorphism 
	$$
	f^*M \otimes_{\mathfrak{S}_L} W(C^\flat) \cong M \otimes_{\mathfrak{S}} W(C^\flat)
	$$
	is clearly $\varphi,G_{L_\infty}$-equivariant so $T(f^*M) =  T(M)|_{G_{L_\infty}}$.
\end{notation}

\begin{notation}\label{inductionUNRAM}
	With notation as in Notation~\ref{restrictionUNRAM}, restriction of scalars along $f$ induces a functor
	$$
	f_*\colon \operatorname{Mod}^{\operatorname{BK}}_L \rightarrow \operatorname{Mod}^{\operatorname{BK}}_K
	$$
	If $M \in \operatorname{Mod}^{\operatorname{BK}}_L$ we equip $f_*M$ with the obvious semilinear map $m \mapsto \varphi_M(m)$. Let us verify that this makes $f_*M$ into a Breuil--Kisin module. The semilinear map induces the composite:
	$$
	(\varphi^*f_*M)[\tfrac{1}{E}] \rightarrow (f_*\varphi^*M)[\tfrac{1}{E}] = f_*(\varphi^*M [\tfrac{1}{E}]) \xrightarrow{f_*\varphi_M} f_*(M[\tfrac{1}{E}]) = (f_*M)[\tfrac{1}{E}]
	$$
	which we claim is an isomorphism. It suffices to check the natural map $\varphi^*f_*M \rightarrow f_*\varphi^*M$ is an isomorphism, and this follows because the commutative diagram
	$$
	\begin{tikzcd}[column sep =small, row sep=small]
	\mathfrak{S} \arrow{r}{f} & \mathfrak{S}_L \\
	\mathfrak{S} \arrow{r}{f} \arrow{u}{\varphi} & \mathfrak{S}_L \arrow{u}[swap]{\varphi}
	\end{tikzcd}
	$$
	is a pushout. 
\end{notation}

\begin{lemma}\label{frobrecBK}
	For all $M \in \operatorname{Mod}^{\operatorname{BK}}_K$ and $N \in \operatorname{Mod}^{\operatorname{BK}}_L$ there are functorial isomorphisms
	$$
	\operatorname{Hom}(M,f_*N) \cong f_*\operatorname{Hom}(f^*M,N)
	$$
	in $\operatorname{Mod}^{\operatorname{BK}}_K$.
\end{lemma}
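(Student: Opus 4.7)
The plan is to split the verification into two steps. The first, on the level of underlying $\mathfrak{S}$-modules, is simply tensor--hom adjunction: for any $\mathfrak{S}$-module $M$ and any $\mathfrak{S}_L$-module $N$ there is a natural isomorphism
$$
\operatorname{Hom}_{\mathfrak{S}}(M,N) \xrightarrow{\sim} \operatorname{Hom}_{\mathfrak{S}_L}(\mathfrak{S}_L \otimes_{\mathfrak{S}} M, N)
$$
sending $\phi$ to the $\mathfrak{S}_L$-linear extension $m \otimes s \mapsto s\phi(m)$. The left-hand side is, by definition, $\operatorname{Hom}(M, f_*N)$, while the right-hand side, regarded as an $\mathfrak{S}$-module by restriction along $f$, is $f_*\operatorname{Hom}(f^*M, N)$. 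I would record this isomorphism explicitly in order to set up the second step.

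The second, and main, step is to check that the $\varphi$-structures supplied by Construction~\ref{internalhom} on each side agree under this identification. Unwinding Construction~\ref{internalhom} on the left, the semilinear $\varphi$ sends $\phi\colon M \to f_*N$ to the composite
$$
M[\tfrac{1}{E}] \xrightarrow{\varphi_M^{-1}} \varphi^*M[\tfrac{1}{E}] \xrightarrow{\varphi^*\phi} \varphi^*(f_*N)[\tfrac{1}{E}] \xrightarrow{\varphi_{f_*N}} (f_*N)[\tfrac{1}{E}],
$$
and similarly on the right with $M$ replaced by $f^*M$ and $\varphi_{f_*N}$ by $\varphi_N$. Matching the two composites reduces to identities already encoded in Notation~\ref{restrictionUNRAM} and Notation~\ref{inductionUNRAM}: namely $\varphi_{f^*M} = f^*\varphi_M$ (the definition of $f^*$ on Breuil--Kisin modules), $\varphi_{f_*N} = f_*\varphi_N$ (the definition of $f_*$), and the base-change isomorphism $\varphi^*f^*M = f^*\varphi^*M$ provided by the pushout square in Notation~\ref{inductionUNRAM}. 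Chasing the tensor--hom adjunction through these identifications makes the $\varphi$-actions correspond, and naturality follows from naturality of tensor--hom in each argument.

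I expect the main obstacle to be purely notational: one has to keep careful track of where $\varphi_M^{-1}$ and $\varphi_N$ appear, which forces everything to be done after inverting $E$ (as in Construction~\ref{internalhom}), and one has to invoke flatness of $\varphi \colon \mathfrak{S} \to \mathfrak{S}$ and of $f\colon \mathfrak{S} \to \mathfrak{S}_L$ to commute $\operatorname{Hom}$ past the relevant base changes on both sides simultaneously. Once this bookkeeping is in place, there is no genuine content beyond adjunction, and the proof is a short diagram chase.
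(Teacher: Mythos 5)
Your proposal is correct and follows essentially the same route as the paper: the paper also produces the underlying $\mathfrak{S}$-linear isomorphism from tensor--hom adjunction, writing it explicitly as $\alpha \mapsto (m \otimes s \mapsto s\alpha(m))$, and then simply observes that this map is $\varphi$-equivariant, giving the isomorphism in $\operatorname{Mod}^{\operatorname{BK}}_K$. Your second step merely spells out the $\varphi$-equivariance check that the paper leaves implicit, correctly identifying the ingredients ($\varphi_{f^*M} = f^*\varphi_M$, $\varphi_{f_*N} = f_*\varphi_N$, and the base-change isomorphism $\varphi^*f^* \cong f^*\varphi^*$) as coming from Notations~\ref{restrictionUNRAM} and~\ref{inductionUNRAM}.
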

\begin{proof}
	The standard adjunction between $f^*$ and $f_*$ provides functorial $\mathfrak{S}$-linear isomorphisms $\operatorname{Hom}_{\mathfrak{S}}(M,f_*N) \rightarrow \operatorname{Hom}_{\mathfrak{S}_L}(f^*M,N)$. Explicitly this map sends $\alpha$ onto the homomorphism $m \otimes s \mapsto s \alpha(m)$. As this is $\varphi$-equivariant we get isomorphisms as claimed.
\end{proof}

\begin{lemma}\label{yoneda}
	Let $N \in \operatorname{Mod}^{\operatorname{BK}}_L$. Then there are functorial identifications $\iota_N:T(f_*N) \rightarrow \operatorname{Ind}_{L_\infty}^{K_\infty} T(N)$ such that the diagram
	$$
	\begin{tikzcd}[column sep =small, row sep=small]
	\operatorname{Hom}_{\operatorname{BK}}(M,f_*N) \arrow{r}{\ref{frobrecBK}} \arrow{d}{g \mapsto \iota_N \circ T(g)} & \operatorname{Hom}_{\operatorname{BK}}(f^*M,N) \arrow{d}{T}\\
	\operatorname{Hom}_{G_{K_\infty}}(T(M),\operatorname{Ind}_{L_\infty}^{K_\infty} T(N)) \arrow{r}{(\operatorname{Frob})} & \operatorname{Hom}_{G_{L_\infty}}(T(M)|_{G_{L_\infty}},T(N))
	\end{tikzcd}
	$$
	commutes for all $M \in \operatorname{Mod}^{\operatorname{BK}}_K$. The top horizontal arrow is obtained from the identification in Lemma~\ref{frobrecBK} by taking $\varphi$-invariants, and the lower horizontal arrow is given by Frobenius reciprocity.
\end{lemma}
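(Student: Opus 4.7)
The plan is to construct $\iota_N$ as the Frobenius-reciprocity adjoint of a natural $G_{L_\infty}$-equivariant map $r_N \colon T(f_*N) \to T(N)$, verify it is an isomorphism by base-changing to $W(C^\flat)$, and then check commutativity of the diagram by chasing definitions. For the construction: once an embedding $l \hookrightarrow \overline k$ is fixed, the embedding $\mathfrak{S} \hookrightarrow W(C^\flat)$ factors through $\mathfrak{S}_L$, and multiplication provides a $\varphi$- and $G_{L_\infty}$-equivariant surjection
$$
f_*N \otimes_\mathfrak{S} W(C^\flat) \;=\; N \otimes_{\mathfrak{S}_L} \bigl(\mathfrak{S}_L \otimes_\mathfrak{S} W(C^\flat)\bigr) \twoheadrightarrow N \otimes_{\mathfrak{S}_L} W(C^\flat).
$$
Taking $\varphi = 1$-invariants gives $r_N$, and I define $\iota_N$ to be its image under classical Frobenius reciprocity.

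To see $\iota_N$ is an isomorphism I decompose
$$
\mathfrak{S}_L \otimes_\mathfrak{S} W(C^\flat) \;=\; W(l) \otimes_{W(k)} W(C^\flat) \;\cong\; \prod_\sigma W(C^\flat),
$$
the product running over the $[l:k]$ embeddings $\sigma \colon W(l) \hookrightarrow W(C^\flat)$ over $W(k)$. The $G_{K_\infty}$-action on this set is simply transitive with stabiliser $G_{L_\infty}$, while $\varphi$ permutes factors via $\sigma \mapsto \sigma \circ \varphi|_{W(l)}$. Taking $\varphi = 1$-invariants of $\prod_\sigma (N \otimes_{\mathfrak{S}_L, \sigma} W(C^\flat))$ then yields a $G_{K_\infty}$-equivariant identification with $\operatorname{Ind}_{L_\infty}^{K_\infty} T(N)$, and the uniqueness statement in Proposition~\ref{font} identifies this with the image of $\iota_N$. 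The main obstacle is precisely this base-change calculation: one needs the product decomposition to be compatible with both the $\varphi$-action and the $G_{K_\infty}$-action simultaneously, and this relies crucially on $L/K$ being unramified so that $W(l)/W(k)$ is étale and its embeddings into $W(C^\flat)$ form a single $G_{K_\infty}$-orbit with stabiliser $G_{L_\infty}$.

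For commutativity of the diagram, fix $g \colon M \to f_*N$ and write $\widetilde g \colon f^*M \to N$, $m \otimes s \mapsto s \cdot g(m)$, for its adjoint from Lemma~\ref{frobrecBK}. The top-right path yields $T(\widetilde g) \colon T(f^*M) = T(M)|_{G_{L_\infty}} \to T(N)$. The left-bottom path yields $\operatorname{ev}_1 \circ \iota_N \circ T(g) = r_N \circ T(g)$, since classical Frobenius reciprocity is realised by evaluation at the identity coset and $\operatorname{ev}_1 \circ \iota_N = r_N$ by construction. Representing an element $\sum m_i \otimes w_i \in T(M) \subset M \otimes_\mathfrak{S} W(C^\flat)$, one computes directly that both maps send it to $\sum g(m_i) \otimes w_i \in N \otimes_{\mathfrak{S}_L} W(C^\flat)$, proving commutativity.
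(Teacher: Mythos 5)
Your approach is genuinely different from the paper's. The paper first reduces to \'etale $\varphi$-modules (as you do not), then invokes Fontaine's equivalence $M^{\operatorname{et}} \mapsto T(M^{\operatorname{et}})$ to construct a functorial isomorphism of Hom-sets $\operatorname{Hom}_{G_{K_\infty}}(V, T(f_*N)) \to \operatorname{Hom}_{G_{K_\infty}}(V, \operatorname{Ind}_{L_\infty}^{K_\infty} T(N))$ and applies Yoneda's lemma; this produces $\iota_N$ together with its functoriality and the commutative square all at once, with no need to ever write the map down or verify it is an isomorphism. Your argument instead builds $\iota_N$ explicitly as the Frobenius-reciprocity adjoint of the evaluation map $r_N$, then verifies it is an isomorphism by an explicit base-change and verifies the square by chasing elements. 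This is more hands-on and more elementary (it avoids the equivalence of categories), but it pays for this by having to do the isomorphism check, which the Yoneda route gets for free; and the Yoneda route more transparently yields functoriality in $N$.

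One technical point in your isomorphism argument is stated incorrectly, though it does not invalidate the conclusion. You claim that $\varphi$ permutes the factors of $\prod_\sigma W(C^\flat)$ via $\sigma \mapsto \sigma \circ \varphi|_{W(l)}$. But the Frobenius here is $\varphi \otimes \varphi$ on $W(l) \otimes_{W(k)} W(C^\flat)$, and each $W(k)$-embedding $\sigma\colon W(l) \hookrightarrow W(C^\flat)$ commutes with the Witt-vector Frobenius; a short computation with the idempotents (the $\sigma'$-component of $\varphi(e_\sigma)$ is $\varphi(\delta_{\sigma,\sigma'}) = \delta_{\sigma,\sigma'}$) shows that $\varphi$ \emph{fixes} each idempotent $e_\sigma$ and acts on each factor $W(C^\flat)$ by its usual Frobenius. (This is different from the situation in Construction~\ref{O-semilinear}, where $\varphi$ acts only on the left tensor factor and therefore genuinely permutes the idempotents.) Moreover, $\sigma \circ \varphi|_{W(l)}$ is not even a $W(k)$-algebra embedding unless $k = \mathbb{F}_p$, so it does not lie in the index set. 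The correct statement — that $\varphi$ preserves each factor while $G_{K_\infty}$ permutes them simply transitively with stabiliser $G_{L_\infty}$ — is exactly what makes the $\varphi$-invariants of the product into $\operatorname{Ind}_{L_\infty}^{K_\infty} T(N)$, so your final identification is right, but you should fix the description of the $\varphi$-action.
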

\begin{proof}
	Let $\mathcal{O}_{\mathcal{E},L}$ be the $p$-adic completion of $\mathfrak{S}_L[\frac{1}{u}]$. The map $f: \mathfrak{S} \rightarrow \mathfrak{S}_L$ extends to a map $f: \mathcal{O}_{\mathcal{E}} \rightarrow \mathcal{O}_{\mathcal{E},L}$ and so we can make sense of the operations $f^*$ and $f_*$ on etale $\varphi$-modules. Write $M^{\operatorname{et}} = M \otimes_{\mathfrak{S}} \mathcal{O}_{\mathcal{E}}$ and $N^{\operatorname{et}} = N \otimes_{\mathfrak{S}_L} \mathcal{O}_{\mathcal{E},L}$. Then clearly $f^* (M^{\operatorname{et}}) = (f^*M)^{\operatorname{et}}$, and because $\mathcal{O}_{\mathcal{E},L} = \mathcal{O}_{\mathcal{E}} \otimes_{\mathfrak{S}} \mathfrak{S}_L$ we also have that $f_*(N^{\operatorname{et}}) = (f_*N)^{\operatorname{et}}$. We obtain maps
	$$
	\operatorname{Hom}_{\operatorname{BK}}(M,f_*N) \rightarrow \operatorname{Hom}_{\operatorname{et}}(M^{\operatorname{et}},f_*N^{\operatorname{et}}), \qquad \operatorname{Hom}_{\operatorname{BK}}(f^*M,N) \rightarrow \operatorname{Hom}_{\operatorname{et}}(f^*M^{\operatorname{et}},N^{\operatorname{et}})
	$$
	which commute with $T$. The analogue of Lemma~\ref{frobrecBK} in the setting of etale $\varphi$-modules is proved in exactly the same way, and the obtained identification is compatible with the maps above. Thus to prove the lemma we may replace $\operatorname{Hom}_{\operatorname{BK}}$ with $\operatorname{Hom}_{\operatorname{et}}$ (homsets in the category of etale $\varphi$-modules) and $M$ and $N$ with $M^{\operatorname{et}}$ and $N^{\operatorname{et}}$ in the diagram of the lemma.
	
	Since $M^{\operatorname{et}} \mapsto T(M^{\operatorname{et}})$ is an equivalence of categories, the map $
	(\operatorname{Frob}) \circ T \circ \eqref{frobrecBK} \circ T^{-1}$ describes an identification 
	\begin{equation}\label{nathom}
	\operatorname{Hom}_{G_{K_\infty}}(V,T(f_*N)) \rightarrow \operatorname{Hom}_{G_{K_\infty}}(V,\operatorname{Ind}_{L_\infty}^{K_\infty} T(N))
	\end{equation}
	for any continuous $G_{K_\infty}$-representation $V$ on a finitely generated $\mathbb{Z}_p$-module. As \eqref{nathom} is functorial in $V$ Yoneda's lemma provides the isomorphism $\iota_N$. As \eqref{nathom} is functorial in $N$ we see that $\iota_N$ is functorial.
\end{proof}

\begin{lemma}\label{SDinductionrestriction}
	Assume $k \subset l \subset \mathbb{F}$.
	\begin{enumerate}
		\item If $M \in \operatorname{Mod}^{\operatorname{SD}}_k(\mathcal{O})$ then $f^*M \in \operatorname{Mod}^{\operatorname{SD}}_l(\mathcal{O})$ and for each $\theta \in \operatorname{Hom}_{\mathbb{F}_p}(l,\mathbb{F})$ we have 
		$$
		\operatorname{Weight}_\theta(f^*M) = \operatorname{Weight}_{\theta|_k}(M)
		$$
		\item If $N \in \operatorname{Mod}^{\operatorname{SD}}_l(\mathcal{O})$ then $f_*N \in \operatorname{Mod}^{\operatorname{SD}}_k(\mathcal{O})$ and 
		$$
		\operatorname{Weight}_\tau(f_*N) = \bigcup_{\theta|_k = \tau} \operatorname{Weight}_\theta(N)
		$$
	\end{enumerate} 
\end{lemma}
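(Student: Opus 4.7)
The plan is to reduce both claims to bookkeeping of the isotypic decompositions from Construction~\ref{O-semilinear}. After tensoring with $\mathcal{O}$ and using the idempotent decompositions, the base change $f\otimes 1\colon \mathfrak{S}\otimes_{\mathbb{Z}_p}\mathcal{O} \to \mathfrak{S}_L\otimes_{\mathbb{Z}_p}\mathcal{O}$ identifies with the ring map $\prod_{\tau} \mathcal{O}[[u]] \to \prod_{\theta} \mathcal{O}[[u]]$ sending $(a_\tau)_\tau$ to $(a_{\theta|_k})_\theta$. The essential input is that an embedding $\theta\colon W(l)\to \mathcal{O}$ restricts to $\theta|_k$ on $W(k)$, so $\widetilde{e}_\theta$ in $\mathfrak{S}_L\otimes\mathcal{O}$ is absorbed by $\widetilde{e}_{\theta|_k}$ from $\mathfrak{S}\otimes\mathcal{O}$. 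Because $L/K$ is unramified, $E(u)$ has coefficients in $W(k)$ and therefore the distinguished polynomial in each $\theta$-factor is $\theta(E)=\theta|_k(E)=\tau(E)$, where $\tau=\theta|_k$.

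For (1), the previous paragraph gives $(f^*M)_\theta \cong M_{\theta|_k}$ as $\mathbb{F}[[u]]$-modules, and unwinding the construction of $\varphi_{f^*M}$ shows that its $\theta$-component is exactly $\varphi_M\colon M_{\tau\circ\varphi} \to M_\tau[\tfrac{1}{\tau(E)}]$ for $\tau = \theta|_k$. Hence a basis $(f_i)$ of $M_{\theta|_k}$ satisfying condition~(2) of Lemma~\ref{equivSDO} for $M$ transports, via this identification, to a basis of $(f^*M)_\theta$ satisfying the same condition for $f^*M$, with identical exponents. Consequently $f^*M \in \operatorname{Mod}^{\operatorname{SD}}_l(\mathcal{O})$ (the weights stay in $[0,p]$), and Remark~\ref{WtremarkO} yields $\operatorname{Weight}_\theta(f^*M) = \operatorname{Weight}_{\theta|_k}(M)$.

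For (2), the same dictionary identifies $(f_*N)_\tau = \bigoplus_{\theta|_k=\tau} N_\theta$ as $\mathbb{F}[[u]]$-modules, and the $\tau$-component of $\varphi_{f_*N}$ decomposes as $\bigoplus_{\theta|_k=\tau}\varphi_N\colon N_{\theta\circ\varphi} \to N_\theta[\tfrac{1}{\tau(E)}]$; here I use that $\theta\mapsto \theta\circ\varphi$ is a bijection between $\{\theta : \theta|_k=\tau\}$ and $\{\theta : \theta|_k=\tau\circ\varphi\}$. Concatenating, over such $\theta$, bases of the $N_\theta$ provided by Lemma~\ref{equivSDO}(2) yields a basis of $(f_*N)_\tau$ with the required property, and the exponents collect to $\bigcup_{\theta|_k=\tau}\operatorname{Weight}_\theta(N)\subset[0,p]$.

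There is no substantial obstacle: the content is the compatibility of the isotypic decompositions with $f$, together with the fact that unramifiedness of $L/K$ leaves $E$ unchanged. The only minor point to record is that $(f_*N)_\tau$ is indeed finite free over $\mathbb{F}[[u]]$, which is immediate from its presentation as a finite direct sum of finite free $\mathbb{F}[[u]]$-modules $N_\theta$.
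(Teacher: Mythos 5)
Your proposal is correct and follows essentially the same route as the paper: reduce to the idempotent decomposition from Construction~\ref{O-semilinear}, note that the map on idempotents sends $e_\tau \mapsto \sum_{\theta|_k=\tau} e_\theta$, identify $(f^*M)_\theta = M_{\theta|_k}$ and $(f_*N)_\tau = \prod_{\theta|_k=\tau} N_\theta$ compatibly with $\varphi$, and then verify condition~(2) of Lemma~\ref{equivSDO} by transporting or concatenating the given bases. The paper states this tersely; you have merely spelled out the same bookkeeping.
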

\begin{proof}
	By functoriality both $f^*$ and $f_*$ preserve $\mathcal{O}$-actions. Note that the inclusion $k[[u]] \otimes_{\mathbb{F}_p} \mathbb{F} \rightarrow l[[u]] \otimes_{\mathbb{F}_p} \mathbb{F}$ sends  the idempotents $e_\tau \mapsto \sum_{\theta|_k = \tau} e_\theta$. Thus $(f^*M)_\theta =  M_{\theta|_k}$ and $(f_*N)_\tau = \prod_{\theta|_k = \tau} N_\theta$. Both (1) and (2) then follow by verifying the second condition of Lemma~\ref{equivSDO}.
\end{proof}

\subsection{Approximation by induced Breuil--Kisin modules}

We consider the situation given in Notation~\ref{restrictionUNRAM}. Thus $L/K$ is a finite unramified extension, corresponding to an extension $l/k$ of residue fields, and $L_\infty = L(\pi^{1/p^\infty})$. We also have the map $f\colon \mathfrak{S} \rightarrow \mathfrak{S}_L$. 
\begin{lemma}\label{intermsofN}
	Suppose $M \in \operatorname{Mod}^{\operatorname{SD}}_k(\mathcal{O})$ and assume that $T(M) \cong \operatorname{Ind}_{L_\infty}^{K_\infty} T'$. Then there exists an $N \in \operatorname{Mod}^{\operatorname{SD}}_k(\mathcal{O})$ with $T(N) = T'$, together with a $\varphi$-equivariant inclusion 
	$$
	M \hookrightarrow f_*N
	$$
	of $k[[u]] \otimes_{\mathbb{F}_p} \mathbb{F}$-modules which becomes an isomorphism after inverting $p$.
\end{lemma}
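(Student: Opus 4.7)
My plan is to construct $N$ as a suitable quotient of $f^*M$, combining Galois-theoretic Frobenius reciprocity with the adjunction $(f^*, f_*)$ on Breuil--Kisin modules (Lemma~\ref{frobrecBK}), and then to extract the inclusion $M \hookrightarrow f_*N$ from the resulting morphism.

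First I would restrict: by Lemma~\ref{SDinductionrestriction}(1) the pullback $f^*M$ lies in $\operatorname{Mod}^{\operatorname{SD}}_l(\mathcal{O})$ and satisfies $T(f^*M) = T(M)|_{G_{L_\infty}}$. Applying Galois-theoretic Frobenius reciprocity to the hypothesis $T(M) \cong \operatorname{Ind}_{L_\infty}^{K_\infty} T'$ produces a $G_{L_\infty}$-equivariant surjection $T(f^*M) \twoheadrightarrow T'$. By Lemma~\ref{ptorsionexactness} this lifts to a short exact sequence
\[
0 \to M_1 \to f^*M \to N \to 0
\]
in $\operatorname{Mod}^{\operatorname{BK}}_l$ with $T(N) = T'$; the uniqueness clause forces $M_1$ and $N$ to inherit $\mathcal{O}$-actions compatible with that on $f^*M$. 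Since $f^*M$ is strongly divisible, Proposition~\ref{SDsubquotO}(1) yields $N \in \operatorname{Mod}^{\operatorname{SD}}_l(\mathcal{O})$, which is the required candidate.

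Next I would transport the surjection across the adjunction. Lemma~\ref{frobrecBK} converts $f^*M \twoheadrightarrow N$ into a $\varphi$-equivariant $(k[[u]] \otimes_{\mathbb{F}_p} \mathbb{F})$-linear map $\gamma \colon M \to f_*N$. The commutative square in Lemma~\ref{yoneda} then identifies $\iota_N \circ T(\gamma)$ with the Galois-side Frobenius-reciprocity image of the surjection $T(f^*M) \twoheadrightarrow T(N) = T'$, which by construction is the original isomorphism $T(M) \xrightarrow{\sim} \operatorname{Ind}_{L_\infty}^{K_\infty} T'$. Hence $T(\gamma)$ is an isomorphism. By the full faithfulness of $T$ on etale $\varphi$-modules (Proposition~\ref{font}), the localisation $M[\tfrac{1}{u}] \to (f_*N)[\tfrac{1}{u}]$ is then an isomorphism; because $f_*N$ is $u$-torsion-free and $M$ is free over $k[[u]] \otimes_{\mathbb{F}_p} \mathbb{F}$, this immediately forces $\gamma$ itself to be injective.

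The delicate point is the compatibility between the BK-side adjunction in Lemma~\ref{frobrecBK} and Galois-side Frobenius reciprocity provided by Lemma~\ref{yoneda}: this is what guarantees that the $\gamma$ one extracts from $f^*M \twoheadrightarrow N$ really induces the isomorphism we began with, and not some unrelated $G_{K_\infty}$-equivariant map $T(M) \to \operatorname{Ind}_{L_\infty}^{K_\infty} T'$. The remaining ingredients---descent of strong divisibility to quotients via Proposition~\ref{SDsubquotO}(1) and full faithfulness of the etale $\varphi$-module functor via Proposition~\ref{font}---are already available, so no genuinely new work is needed beyond carefully bookkeeping these compatibilities.
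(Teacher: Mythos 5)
Your proof matches the paper's argument essentially step for step: apply Frobenius reciprocity on the Galois side to produce a $G_{L_\infty}$-equivariant surjection $T(f^*M)\twoheadrightarrow T'$, lift it to a surjection $f^*M\twoheadrightarrow N$ via Lemma~\ref{ptorsionexactness}, deduce strong divisibility of $N$ from Lemma~\ref{SDinductionrestriction} and Proposition~\ref{SDsubquotO}, and finally transport across the adjunction using the compatibility in Lemma~\ref{yoneda} to get an injection $M\hookrightarrow f_*N$ inverting to an isomorphism. The only thing you add is the (correct and worthwhile) remark that uniqueness in Lemma~\ref{ptorsionexactness} forces the $\mathcal{O}$-action on $N$; otherwise the approach and the sequence of lemmas invoked coincide with the paper's.
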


\begin{proof}
	There is a non-zero map $T(M)|_{G_{L_\infty}} \rightarrow T'$ corresponding under Frobenius reciprocity to the isomorphism $T(M) \cong \operatorname{Ind}^{K_\infty}_{L_\infty} T'$. Thus there is a surjection $f^*M \rightarrow N$ where $N \in \operatorname{Mod}^{\operatorname{BK}}_l(\mathcal{O})$ is of rank one with $T(N) = T'$ (Lemma~\ref{ptorsionexactness}). Applying Lemma~\ref{yoneda} to $f^*M \rightarrow N$ we obtain a map 
	\begin{equation*}\label{inclusion}
	M \rightarrow f_*N
	\end{equation*} 
	which, after applying $T$, induces the identification $T(N) = T'$.  Thus $M \rightarrow f_*N$ becomes an isomorphism after inverting $u$ and is, in particular, injective. Lemma~\ref{SDinductionrestriction} implies $f^*M \in \operatorname{Mod}^{\operatorname{SD}}_l(\mathcal{O})$, since $M \in \operatorname{Mod}^{\operatorname{SD}}_k(\mathcal{O})$. Therefore $N \in \operatorname{Mod}^{\operatorname{SD}}_k(\mathcal{O})$ by Proposition~\ref{SDsubquotO}. 
	\end{proof}
	
	When $T(M)$ is irreducible and $\mathbb{F}$ is sufficiently large $T(M)$ is induced from a character, and so Lemma~\ref{intermsofN} produces an inclusion $M \hookrightarrow f_*N$ with $N$ of rank one. Lemma~\ref{rankone} allows us to describe $N$ explicitly. In this case we would like to know which submodules of $f_*N$ arise in this way. The following example shows that there are non-trivial possibilities.
	
	\subsection{An example}\label{example}   Take $K = \mathbb{Q}_p$ and let $L/K$ be of degree $5$ with residue extension $l/k$. Let $N \in \operatorname{Mod}^{\operatorname{SD}}_l(\mathcal{O})$ be the rank one object defined by
	$$
	N = l[[u]] \otimes_{\mathbb{F}_p} \mathbb{F}, \qquad \varphi_N(1) = u^x e_{\theta \circ \varphi^4} + u^n e_{\theta \circ \varphi^3} + e_{\theta \circ \varphi^2} + u^n e_{\theta \circ \varphi} + e_{\theta}
	$$
	Here we have fixed $\theta \in \operatorname{Hom}_{\mathbb{F}_p}(l,\mathbb{F})$ and $1 \leq n \leq p, 0 \leq x \leq p$. Let $M \subset f_*N$ be the sub-module generated over $\mathbb{F}[[u]]$ by $e_{\theta \circ \varphi^4},e_{\theta \circ \varphi^3} + e_{\theta \circ \varphi}, e_{\theta \circ \varphi^2}, ue_{\theta \circ \varphi},e_{\theta}$. One computes that
	$$
	\varphi(e_{\theta \circ \varphi^4}, e_{\theta \circ \varphi^3} + e_{\theta \circ \varphi}, e_{\theta \circ \varphi^2}, ue_{\theta \circ \varphi}, e_\theta) = (e_{\theta \circ \varphi^4}, e_{\theta \circ \varphi^3} + e_{\theta \circ \varphi}, e_{\theta \circ \varphi^2}, ue_{\theta \circ \varphi}, e_\theta) X
	$$
	where 
	$$
	X = \begin{pmatrix}
	0 & 0 & 0 & 0 & 1 \\
	1 & 0 & 0 & 0 & 0 \\
	0 & 1 & 0 & 0 & 0 \\
	0 & 0 & 1 & 0 & -1 \\
	0 & 0 & 0 & 1 & 0
	\end{pmatrix} \begin{pmatrix}
	u^n & 0 & 0 & 0 & 0 \\
	0 & 1 & 0 & 0 & 0 \\
	0 & 0 & u^{n-1} & 0 & 0 \\
	0 & 0 & 0 & u^{p} & 0 \\
	0 & 0 & 0 & 0 & u^x
	\end{pmatrix}
	\begin{pmatrix}
	1 & 0 & 0 & 0 & 0 \\
	0 & 1 & 0 & 0 & 0 \\
	-1 & 0 & 1 & 0 & 0 \\
	0 & 0 & 0 & 1 & 0 \\
	0 & 0 & 0 & 0 & 1
	\end{pmatrix}
	$$
	which shows that $M \in \operatorname{Mod}^{\operatorname{SD}}_k(\mathcal{O})$.
	
	\subsection{Irreducibility and strong divisibility}\label{irredsub}
	Let $L/K$, $l/k$ and $L_\infty/K_\infty$ be as in Notation~\ref{restrictionUNRAM}; we obtain $f\colon \mathfrak{S} \rightarrow \mathfrak{S}_L$. Let $N \in \operatorname{Mod}_l^{\operatorname{SD}}(\mathcal{O})$ be the rank one object given by
	\begin{equation*}
	N = l[[u]] \otimes_{\mathbb{F}_p} \mathbb{F}, \qquad \varphi_N(1) = \sum_{\theta \in \operatorname{Hom}_{\mathbb{F}_p}(l,\mathbb{F})} u^{r_\theta} e_\theta
	\end{equation*}
	Since $N \in \operatorname{Mod}^{\operatorname{SD}}_k(\mathcal{O})$ each $r_\theta \in [0,p]$. Note this $N$ is as in Lemma~\ref{rankone}, except we've fixed $x = 1$. This is to simplify notation (it will be easy to reduce from the general case to this one). The following proposition describes which Breuil--Kisin modules embed into $f_*N$ as in Lemma~\ref{intermsofN}.
	
	\begin{proposition}\label{explicitSD}
	Assume $T(f_*N)$ is irreducible. Let $M \subset f_*N$ be a finite free $k[[u]] \otimes_{\mathbb{F}_p} \mathbb{F}$-sub-module with $M[\frac{1}{u}] = (f_*N)[\frac{1}{u}]$. Then $M \in \operatorname{Mod}^{\operatorname{SD}}_k(\mathcal{O})$ if and only if the following conditions are satisfied.
	\begin{enumerate}   
		\item If $m \in M$ then $\varphi(m) \in M$ and if $\varphi(m) \in u^{p+1}M$ then $m \in uM$.
		\item If $\sum \alpha_\theta e_\theta \in M$ with $\alpha_{\theta} \in \mathbb{F}$ then 
		$$
		\sum_{r_\theta \equiv r~\operatorname{ mod}p} \alpha_{\theta} e_{\theta} \in M
		$$
		for every $0 \leq r \leq  p$.
	\end{enumerate}
	\end{proposition}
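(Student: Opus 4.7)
The plan is to prove the two directions of the equivalence separately, in each case working $\tau$-component by $\tau$-component via the decomposition of Construction~\ref{O-semilinear}.

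For the forward direction, suppose $M$ is strongly divisible and fix a basis $(f_i)$ of $M_\tau$ as in Lemma~\ref{equivSDO}(2), with weights $r_i \in [0,p]$ such that $(u^{r_i}f_i)$ is an $\mathbb{F}[[u^p]]$-basis of $\varphi(M)_\tau$. Condition~(1a) is then immediate since each $u^{r_i}f_i$ lies in $M_\tau$. For~(1b), I would write $\varphi(f_j) = \sum_i c_{ij}u^{r_i}f_i$ with $(c_{ij}) \in \operatorname{GL}_n(\mathbb{F}[[u^p]])$; if $m = \sum a_j f_j$ satisfies $\varphi(m) \in u^{p+1}M$, expanding componentwise yields $\sum_j c_{ij}\varphi(a_j) \in u^{p+1-r_i}\mathbb{F}[[u]]$ for each $i$. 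The arithmetic inclusion $\mathbb{F}[[u^p]] \cap u^{p+1-r_i}\mathbb{F}[[u]] \subset u^p\mathbb{F}[[u^p]]$ (valid since $r_i \leq p$), combined with invertibility of $(c_{ij})$, forces $\varphi(a_j) \in u^p\mathbb{F}[[u^p]]$, so $a_j \in u\mathbb{F}[[u]]$ and hence $m \in uM$. For condition~(2), I would expand each basis element as $f_i = \sum_\theta f_{i,\theta}(u)e_\theta$ inside $(f_*N)_\tau$; the identity $\varphi(f_j) = \sum_i c_{ij}u^{r_i}f_i$ in the $\theta$-coordinate becomes $f_{j,\theta\circ\varphi}(u^p)u^{r_\theta} = \sum_i c_{ij}(u^p)u^{r_i}f_{i,\theta}(u)$, and comparing $u$-adic valuations shows that only basis vectors with $r_i \equiv r_\theta \pmod{p}$ can contribute to the constant part of the $e_\theta$-slot. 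Expanding an arbitrary constant-coefficient element of $M_\tau$ in the basis $(f_i)$ then produces the weight-residue decomposition asserted in~(2).

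For the reverse direction, assume~(1) and~(2) and construct a basis satisfying Lemma~\ref{equivSDO}(2) directly. Condition~(2) decomposes the $\mathbb{F}$-subspace $V_\tau := M_\tau \cap \bigoplus_{\theta|_k = \tau}\mathbb{F}e_\theta$ of constant-coefficient elements as $\bigoplus_r V_\tau^{(r)}$, weight-graded by $r \bmod p$, and analogous decompositions apply after $u$-scaling. Condition~(1a) provides $\varphi(M) \subset M$, so the filtrations on $M_k$ and $M_k^\varphi$ from Constructions~\ref{filtrationphiM} and~\ref{filtrationMitself} are defined; condition~(1b) translates to $F^{p+1}M_k = 0$, which will force the resulting weights to lie in $[0,p]$. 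I would then lift the weight-graded pieces $V_\tau^{(r)}$, together with appropriate $u$-scaled contributions, to an $\mathbb{F}[[u]]$-basis $(f_i)$ of $M_\tau$ with assigned weights $r_i \in [0,p]$, and verify using the explicit shape of $\varphi_N(1) = \sum u^{r_\theta}e_\theta$ that $(u^{r_i}f_i)$ genuinely forms an $\mathbb{F}[[u^p]]$-basis of $\varphi(M)_\tau$.

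The main obstacle is the reverse direction: while~(1) and~(2) plainly encode the weight bound and the weight-residue compatibility, converting them into the concrete basis of Lemma~\ref{equivSDO}(2) requires delicate bookkeeping between the $\mathbb{F}[[u]]$-structure of $M_\tau$ and the $\mathbb{F}[[u^p]]$-structure of $\varphi(M)_\tau$. The irreducibility hypothesis on $T(f_*N)$ is expected to enter precisely at this step, ruling out pathological combinatorial configurations that satisfy~(1) and~(2) without producing a valid $\mathbb{F}[[u^p]]$-basis of $\varphi(M)_\tau$ under the $u^{r_i}$-scaling.
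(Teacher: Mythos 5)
The forward direction for condition~(1) is handled correctly, and your valuation argument for~(1b) via the inclusion $\mathbb{F}[[u^p]]\cap u^{p+1-r_i}\mathbb{F}[[u]]\subset u^p\mathbb{F}[[u^p]]$ is a clean alternative to the paper's observation that $F^{p+1}M_k=0$. However, your derivation of condition~(2) does not work as sketched. From the identity $f_{j,\theta\circ\varphi}(u^p)u^{r_\theta}=\sum_i c_{ij}(u^p)u^{r_i}f_{i,\theta}(u)$ one cannot conclude that ``only basis vectors with $r_i\equiv r_\theta\pmod p$ contribute to the constant part of the $e_\theta$-slot'': the right-hand side is a \emph{sum}, and individual terms with $r_i\not\equiv r_\theta$ can appear with cancellation. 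More fundamentally, condition~(2) concerns an arbitrary constant-coefficient element $\sum\alpha_\theta e_\theta\in M$, and projecting onto a weight-residue class is a nontrivial claim about the shape of $M$ inside $f_*N$ that your basis $(f_i)$ does not automatically encode. The paper's argument establishes the preliminary fact that $ue_\theta\in M$ for every $\theta$ (itself requiring a bound on the integers $\delta_\theta$, with the case $p=2$ needing irreducibility of $T(f_*N)$), and then runs a careful descending induction from $r=p$ down to $r=0$ using the filtered isomorphism $M_k\cong M_k^\varphi$; none of this is recovered by your valuation comparison.

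The reverse direction in your proposal is not a proof at all: you describe what would need to be done (construct a basis satisfying Lemma~\ref{equivSDO}(2) from the weight-graded pieces $V_\tau^{(r)}$) and correctly flag that the ``delicate bookkeeping'' and the role of irreducibility are where the real work lies, but you do not carry it out. The paper's argument here is substantial: it introduces the set $X$ determined by a choice of $\lambda\in\operatorname{Hom}_{\mathbb{F}_p}(l,\mathbb{F})$, proves a uniqueness lemma for certain $\mathbb{F}$-linear combinations of the $e_\kappa$, analyses ``minimal sums,'' and uses irreducibility of $T(f_*N)$ to show a $\lambda$ exists satisfying the three compatibility conditions of Proposition~\ref{chooselambda}; only then does it explicitly construct the elements $f_\theta$, $g_{\theta\circ\varphi}$, $h_\theta$ and verify that they produce the required $\mathbb{F}[[u^p]]$-basis of $\varphi(M)$. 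Your intuition that irreducibility rules out bad configurations is correct, but a concrete mechanism for choosing the filtration on the constant-coefficient part (the paper's $\lambda$ and $X$) is the missing idea, and without it the claimed basis construction does not get off the ground.
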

	\begin{proof}[Proof that SD implies (1) and (2)]
		If $M \in \operatorname{Mod}^{\operatorname{SD}}_k(\mathcal{O})$ then $F^0 M_k = M_k$ and $F^{p+1}M_k = 0$. The first condition implies $\varphi(m) \in M$ whenever $m \in M$. The second implies any $m \in M$ with $\varphi(m) \in u^{p+1}M$ must be zero in $M_k$, and so is contained in $uM$. 
		
		Before we verify (2) we explain how (1) implies $ue_\theta \in M$ for every $\theta$. Since $M[\frac{1}{u}] = (f_*N)[\frac{1}{u}]$ there is, for each $\theta$, a smallest integer $\delta_{\theta} \geq 0$ with $u^{\delta_{\theta}} e_\theta \in M$. We have $\varphi(u^{\delta_{\theta \circ \varphi}}e_{\theta \circ \varphi}) = u^{\delta_{\theta \circ \varphi}p - \delta_{\theta} + r_\theta} u^{\delta_{\theta}}e_\theta$, so (1) implies $\delta_{\theta \circ \varphi}p - \delta_{\theta} + r_\theta \in [0,p]$. Therefore $\delta_{\theta \circ \varphi}p - \delta_{\theta} \leq p$ and
		$$
		(p^{[l:\mathbb{F}_p]}-1)\delta_\theta = \sum_{i=0}^{[l:\mathbb{F}_p]-1} p^i(p\delta_{\theta \circ \varphi^{i+1}} - \delta_{\theta \circ \varphi^i}) \leq p(p^{[l:\mathbb{F}_p]}-1)/(p-1)
		$$
		This implies $\delta_\theta \in [0,1]$ if $p>2$, and $\delta_\theta \in [0,2]$ if $p=2$. If $p=2$ and $\delta_{\theta \circ \varphi} =2$ then, as $r_\theta + p\delta_{\theta \circ \varphi} - \delta_\theta \in [0,p]$, we must have $\delta_\theta = 2$ and $r_\theta = 0$. Thus $r_\theta =0$ for all $\theta \in \operatorname{Hom}_{\mathbb{F}_p}(l,\mathbb{F})$ and so $T(N)$ is the trivial character. In this case $T(f_*N)$ is not irreducible.
		
		To prove (2) we first make the following claim. Suppose that $\sum \alpha_{\theta} e_\theta \in M$ with $\alpha_{\theta} \in \mathbb{F}[[u]]$ (so this sum is more general than that in (2)) and that $u^r \sum \alpha_{\theta} e_\theta \in M^\varphi$ for $r \geq 0$. Since $\operatorname{Weight}(M) \subset [0,p]$ we can assume that $r \leq p$. Then:
		\begin{itemize}
			\item There exist $\widetilde{\alpha_{\theta,r}} \in \mathbb{F}[[u]]$ such that $\sum \widetilde{\alpha_{\theta,r}} e_\theta \in M$, $u^{r-1} \sum \widetilde{\alpha_{\theta,r}} e_\theta \in M^\varphi$ and
			$$
			\widetilde{\alpha_{\theta,r}} \equiv \begin{cases} 
			\alpha_{\theta}~\operatorname{mod} u & \text{if $r_\theta \neq r$, except possibly if $r_\theta = 0$ and $r = p$} \\
			0~\operatorname{mod} u & \text{if $r_\theta  = r$}
			\end{cases}
			$$
		\end{itemize}
		To verify the claim we use that, since $M$ is strongly divisible, the map $M_k \rightarrow M^\varphi_k$ is an isomorphism of filtered modules. As $u^r\sum \alpha_{\theta} e_\theta \in F^rM^\varphi$ it follows that there exists an element $\beta \in F^r M$ such that $\varphi(\beta) - u^r\sum \alpha_{\theta} e_\theta \in uM^\varphi$. If $\beta = \sum \beta_\theta e_{\theta \circ \varphi}$ then 
		$$
		\sum \varphi(\beta_{\theta}) u^{r_\theta} e_\theta - u^r \sum \alpha_{\theta} e_\theta = \sum \left( \varphi(\beta_{\theta}) u^{r_\theta} - u^r\alpha_{\theta} \right) e_\theta \in uM^\varphi \cap u^r M
		$$
		As $u^r M \subset u^rN$ and $uM^\varphi \subset uN^\varphi$ we deduce that 
		$$
		v_u \left( \varphi(\beta_{\theta}) u^{r_\theta} - u^r\alpha_{\theta} \right) > \operatorname{max} \lbrace r_\theta,r-1\rbrace
		$$
		 Here $v_u$ denotes the $u$-adic valuation. If $r_\theta > r$ this implies $\alpha_\theta \equiv \varphi(\beta_\theta) \equiv 0$ modulo~$u$. If $r_\theta =r$ then it implies $\alpha_{\theta} \equiv \varphi(\beta_\theta)$ modulo~$u$. If $r> r_\theta$ it implies $\varphi(\beta) \equiv 0$ modulo~$u$. We can therefore find $\gamma_{\theta} \in \mathbb{F}[[u]]$ such that $\varphi(\beta_\theta) = u^p \gamma_{\theta}$ if $r \neq r_\theta$ and $\varphi(\beta_\theta) = \alpha_{\theta} + u^p \gamma_{\theta}$ if $r_\theta = r$. We conclude
		$$
		 \sum_{r_\theta \neq r} \alpha_{\theta} e_\theta - \sum u^{p -r + r_\theta}\gamma_{\theta} e_\theta \in M
		$$
		and $u^{r-1}$ times this element is contained in $M^\varphi$. Taking $\widetilde{\alpha_{\theta,r}} = \alpha_{\theta} -  u^{p-r +r_\theta} \gamma_{\theta}$ when $r_\theta \neq r$ and $\widetilde{\alpha_{\theta,r}} = u^{p-r+ r_\theta} \gamma_\theta$ when $r_\theta  = r$ gives the claim.
		
		We now use the claim to verify (2). Suppose $\sum \alpha_\theta e_\theta \in M$, now with $\alpha_{\theta} \in \mathbb{F}$. As already remarked, the fact that $\operatorname{Weight}(M) \subset [0,p]$ implies $u^pM \subset M^\varphi$. In particular $u^p\sum \alpha_{\theta} e_\theta \in M^\varphi$ so the claim applies, and produces $\sum \widetilde{\alpha_{\theta,p}} e_\theta \in M$. Using that $ue_\theta \in M$ for every $\theta$ we deduce that there are $\gamma_\theta \in \mathbb{F}$ such that $\sum_{r_\theta \neq p} \alpha_{\theta} e_\theta + \sum_{r_\theta = 0} \gamma_{\theta} e_\theta \in M$. Hence
		$$
		\sum_{r_\theta = p} \alpha_{\theta} e_\theta - \sum_{r_\theta = 0} \gamma_\theta e_\theta \in M
		$$
		As $u^{p-1} \sum \widetilde{\alpha_{\theta,p}} e_\theta \in M^\varphi$ we can then apply the claim to $ \sum \widetilde{\alpha_{\theta,p}} e_\theta$, this yields $\sum \widetilde{\alpha_{\theta,p-1}} e_\theta \in M$. Again using that $ue_\theta \in M$ for each $\theta$ we deduce that $\sum_{r_\theta \neq p,p-1} \alpha_{\theta} e_\theta + \sum_{r_\theta = 0} \gamma_{\theta} e_\theta \in M$, and hence
		$$
		\sum_{r_\theta = p-1} \alpha_{\theta} e_\theta \in M
		$$
		Repeatedly applying the claim in this fashion we deduce that $\sum_{r_\theta  = r} \alpha_{\theta} e_\theta$ for $0< r< p$ and $\sum_{r_\theta = 0} \alpha_{\theta} e_\theta + \sum_{r_\theta = 0} \gamma_{\theta} e_\theta \in M$. In particular we find
		$$
		\sum_{r_\theta = p} \alpha_{\theta} e_\theta + \sum_{r_\theta  = 0 } \alpha_{\theta} e_\theta = \sum_{r_\theta \equiv 0} \alpha_{\theta} e_\theta \in M
		$$ 
		which finishes the proof.
	\end{proof}
	
	\subsection{Finishing the proof of Proposition~\ref{explicitSD}}\label{subirred2} Let $N$ be as in the previous subsection and suppose that $M \subset f_*N$ is a free $k[[u]] \otimes_{\mathbb{F}_p} \mathbb{F}$-module with $M[\frac{1}{u}] = (f_*N)[\frac{1}{u}]$. Assume that $M$ satisfies conditions (1) and (2) from Proposition~\ref{explicitSD}. We are going to prove that $M \in \operatorname{Mod}^{\operatorname{SD}}_k(\mathcal{O})$. Along the way we shall describe the weights of $M$ in terms of the $r_\theta$.
	\begin{construction}
		For a fixed $\lambda \in \operatorname{Hom}_{\mathbb{F}_p}(l,\mathbb{F})$ define an ordering on $\operatorname{Hom}_{\mathbb{F}_p}(l,\mathbb{F})$ by asserting  that
		$$
		\lambda \circ \varphi <_\lambda \lambda \circ \varphi^2 <_\lambda \ldots <_{\lambda} \lambda \circ \varphi^{[l\colon \mathbb{F}_p] - 1} <_\lambda \lambda
		$$ 
		Using this ordering we define $X \subset \operatorname{Hom}_{\mathbb{F}_p}(l,\mathbb{F})$ by
		\begin{equation}\label{defnofX}
		\theta \not\in X \Leftrightarrow \text{there exists $\alpha_{\kappa} \in \mathbb{F}$ such that $e_\theta+\sum_{\kappa <_\lambda \theta} \alpha_{\kappa} e_{\kappa} \in M$}
		\end{equation}
		Clearly $X$ depends upon the choice of $\lambda$.
	\end{construction}
	\begin{lemma}\label{unique}
		If $\theta \not\in X$ there exists a unique $\mathbb{F}$-linear combination
		$$
		e_{\theta} + \sum \alpha_{\kappa} e_{\kappa} \in M, \qquad \alpha_{\kappa} \in \mathbb{F}
		$$
		in which the sum runs over $\kappa \in X$ satisfying (i) $\kappa <_\lambda \theta$ (ii) $r_{\kappa} \equiv r_\theta$ modulo~$p$ and (iii) $\kappa|_k = \theta|_k$. In particular the element lies in $M_{\theta|_k}$.
	\end{lemma}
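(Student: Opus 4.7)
The plan is to establish existence by producing a canonical element through projection and elimination, and uniqueness by contradicting the defining property of $X$.

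For existence, I would start from the hypothesis $\theta \notin X$, which by \eqref{defnofX} gives an element $v_0 = e_\theta + \sum_{\kappa <_\lambda \theta} \alpha_\kappa e_\kappa \in M$. First I would enforce condition (iii) by applying the idempotent $\widetilde{e}_{\theta|_k}$; since $M$ is stable under $k[[u]] \otimes_{\mathbb{F}_p} \mathbb{F}$, the new element $\widetilde{e}_{\theta|_k} v_0$ still lies in $M$, still has coefficient $1$ at $e_\theta$ (because $\theta|_k = \theta|_k$), and has support restricted to $\kappa$ with $\kappa|_k = \theta|_k$. Next I would enforce (ii) by applying part (2) of Proposition~\ref{explicitSD} with $r$ equal to the residue of $r_\theta$ modulo $p$; this yields an element of $M$ whose support now also satisfies $r_\kappa \equiv r_\theta \bmod p$.

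To finally obtain support in $X$, I would proceed by well-founded induction on the ordering $<_\lambda$. If $\kappa \notin X$ appears with a nonzero coefficient $c_\kappa$ in the current element, then by \eqref{defnofX} there exists $w_\kappa = e_\kappa + \sum_{\mu <_\lambda \kappa} \beta_{\kappa,\mu} e_\mu \in M$. Applying the same two reductions (projection onto $\tau = \theta|_k = \kappa|_k$ and filtering modulo $p$ at $r = r_\theta \equiv r_\kappa$) to $w_\kappa$, and then subtracting $c_\kappa$ times the result, eliminates the $e_\kappa$ term while only introducing $e_\mu$-terms with $\mu <_\lambda \kappa$ (and still meeting (ii) and (iii)). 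Since $\operatorname{Hom}_{\mathbb{F}_p}(l,\mathbb{F})$ is finite and the terms are moved strictly downward under $<_\lambda$, this process terminates in an element of $M$ of the desired shape. Since each step preserves membership in $M_{\theta|_k}$, the final element lies in $M_{\theta|_k}$.

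For uniqueness, suppose $v$ and $v'$ are two such elements. Then $v - v' = \sum_{\kappa \in X,\,\kappa<_\lambda \theta} (\alpha_\kappa - \alpha'_\kappa) e_\kappa \in M$, with the sum constrained by (ii) and (iii). If this difference is nonzero, let $\kappa_0$ be the $<_\lambda$-maximal index with $\alpha_{\kappa_0} \neq \alpha'_{\kappa_0}$; dividing by this scalar produces an element of the form $e_{\kappa_0} + \sum_{\kappa <_\lambda \kappa_0} \gamma_\kappa e_\kappa \in M$. This contradicts $\kappa_0 \in X$ by the defining property \eqref{defnofX}, so $v = v'$.

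The only nontrivial obstacle is that the elimination step must not produce infinite descent or break conditions (ii) and (iii); this is handled by the finiteness of $\operatorname{Hom}_{\mathbb{F}_p}(l,\mathbb{F})$ together with the fact that both projection by $\widetilde{e}_{\theta|_k}$ and the filter from Proposition~\ref{explicitSD}(2) are $\mathbb{F}$-linear operations on $M$ which preserve the coefficient at any $\kappa$ already satisfying (ii) and (iii).
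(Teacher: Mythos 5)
Your proof is correct and essentially follows the same approach as the paper's: the key ideas — descending along $<_\lambda$ to push the support into $X$ (using the defining property of $X$ for non-$X$ indices), and arguing that a $<_\lambda$-maximal nonzero term in a putative difference would contradict membership in $X$ — are identical. The one cosmetic difference is the order of operations: you enforce (iii) and (ii) up front by applying the idempotent and Proposition~\ref{explicitSD}(2), then run the $X$-reduction checking at each step that (ii) and (iii) are preserved, whereas the paper first reduces to support in $X$, proves the uniqueness statement for such sums, and then uses that uniqueness to conclude (ii) and (iii) for free (applying Proposition~\ref{explicitSD}(2) and the decomposition $M = \prod_\tau M_\tau$ yields new elements of the same restricted form, which must coincide with the original by uniqueness). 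Your version is a little more hands-on but both routes work; one small notational slip is that you write $\widetilde e_{\theta|_k}$, which in the paper denotes the characteristic-zero idempotent in $\mathfrak S\otimes_{\mathbb Z_p}\mathcal O$, rather than its image $e_{\theta|_k}\in k[[u]]\otimes_{\mathbb F_p}\mathbb F$ which is what acts on $M$.
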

\begin{proof}
	As $\theta \not\in X$, there exists $e_\theta + \sum \alpha_{\kappa} e_{\kappa} \in M$ with the sum running over $\kappa <_\lambda \theta$. Arguing inductively one shows there exists such a sum running only over those $\kappa <_\lambda \theta$ with $\kappa \in X$. There can be at most one sum of this form. To see this note that if there were two their difference would give a non-zero $\mathbb{F}$-linear combination $\sum_{\kappa \in X} \beta_{\kappa} e_{\kappa} \in M$. Then the maximal (with respect to $<_\lambda$) $\kappa$ with $\beta_{\kappa} \neq0$ would not be contained in $X$, a contradiction. Condition (2) of Proposition~\ref{explicitSD} therefore implies the sum may be taken to run over $\kappa$ additionally satisfying $(ii)$. As $M = \prod_{\tau \in \operatorname{Hom}_{\mathbb{F}_p}(k,\mathbb{F})} M_{\tau}$ we also have $(iii)$.
\end{proof}
\begin{definition}
	Consider $\mathbb{F}$-linear combinations of the form
	\begin{equation}\label{smallest}
	e_{\iota} + \sum_{0 < j \leq I} \alpha_j e_{\iota \circ \varphi^j} \in M
	\end{equation}
	 with $0 \leq I < [l:\mathbb{F}_p]$ and $\iota \in \operatorname{Hom}_{\mathbb{F}_p}(l,\mathbb{F})$. We say \eqref{smallest} is minimal if there exists no $\iota' \in \operatorname{Hom}_{\mathbb{F}_p}(l,\mathbb{F})$ together with an $\mathbb{F}$-linear combination $e_{\iota'} + \sum_{0 < j \leq J} \alpha_j e_{\iota' \circ \varphi^j} \in M$ such that $J<I$. Note that for a fixed $\iota$ there can exist at most one minimal sum as in \eqref{smallest}; if there were two their difference would have shorter length.
\end{definition}
\begin{lemma}\label{equal}
	If \eqref{smallest} is a minimal sum then $r_{\iota \circ \varphi^j} = r_\iota$ whenever $\alpha_j \neq 0$ and $j \leq I$.
\end{lemma}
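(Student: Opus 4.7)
The plan is to argue by contradiction, using the minimality of the sum in two stages: first reducing to the congruence $r_{\iota\circ\varphi^j}\equiv r_\iota\pmod{p}$ via condition~(2) of Proposition~\ref{explicitSD}, then ruling out the remaining $\{0,p\}$ case using condition~(1) to bring in $\varphi(m)$.

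For the first stage, I apply condition~(2) to $m$ with $r\equiv r_\iota\pmod{p}$ to produce the subsum $\sigma:=\sum\{\tilde\alpha_j e_{\iota\circ\varphi^j}:r_{\iota\circ\varphi^j}\equiv r_\iota\pmod{p}\}\in M$ (with $\tilde\alpha_0=1$); this sum contains $e_\iota$. If $\sigma\neq m$, let $j_0>0$ be the smallest index with $\alpha_{j_0}\neq 0$ and $r_{\iota\circ\varphi^{j_0}}\not\equiv r_\iota\pmod{p}$. Rescaling $m-\sigma\in M$ by $\alpha_{j_0}^{-1}\in\mathbb{F}^\times$ produces an element of $M$ whose leading term is $e_{\iota\circ\varphi^{j_0}}$ and whose top index, after reindexing as $\iota'=\iota\circ\varphi^{j_0}$, is at most $I-j_0<I$; this contradicts the minimality of $m$. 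Hence $\sigma=m$, so $r_{\iota\circ\varphi^j}\equiv r_\iota\pmod{p}$ whenever $\alpha_j\neq 0$.

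Since all $r$-values lie in $[0,p]$, this congruence forces $r_{\iota\circ\varphi^j}=r_\iota$ unless $\{r_\iota,r_{\iota\circ\varphi^j}\}=\{0,p\}$. Assume for contradiction that $r_\iota=0$ and $r_{\iota\circ\varphi^{j^*}}=p$ for some $j^*$ with $\alpha_{j^*}\neq 0$; the case $r_\iota=p$ is handled symmetrically by working with $u^p$-adjusted combinations. By condition~(1), $\varphi(m)\in M$, and expanding via $\varphi(e_{\iota\circ\varphi^j})=u^{r_{\iota\circ\varphi^{j-1}}}e_{\iota\circ\varphi^{j-1}}$ together with $r_\iota=0$ gives $\varphi(m)-\alpha_1 m\in M$ with the $e_\iota$ contributions cancelled. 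Iterating—forming further $\mathbb{F}[[u]]$-combinations of the $\varphi^a(m)\in M$ and exploiting the $u^0$-versus-$u^p$ mismatch between the indices with $r_{\iota\circ\varphi^j}=0$ and those with $r_{\iota\circ\varphi^j}=p$—I aim to construct an element of the form $\gamma\cdot e_{\iota'}\in M$ with $\gamma\in\mathbb{F}[[u]]^\times$ and $\iota'$ in the support of $m$. Dividing by $\gamma$ inside the $\mathbb{F}[[u]]$-module $M_\tau$ for $\tau=\iota'|_k$ then yields $e_{\iota'}\in M$, a length-$0$ sum contradicting the minimality of $m$.

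The main obstacle is executing the second stage cleanly: the shifted indices $\iota\circ\varphi^{-1}$ and $\iota\circ\varphi^{j-1}$ appearing in $\varphi(m)$ may overlap the support of $m$ in complicated patterns depending on the cyclic order on $\operatorname{Hom}_{\mathbb{F}_p}(l,\mathbb{F})$ and on which $\alpha_j$ vanish, so the iterative cancellation must be tracked with care. I expect the cleanest execution proceeds by induction on the cardinality of $\{j:\alpha_j\neq 0,\ r_{\iota\circ\varphi^j}=p\}$, each step reducing to a configuration with one fewer ``bad'' index, and uses the explicit product decomposition $M=\prod_\tau M_\tau$ to ensure that the final inversion of $\gamma$ takes place within a single $\tau$-component and so produces an element of $M$ rather than of $M[\tfrac{1}{u}]$.
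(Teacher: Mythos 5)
Your first stage is correct and is essentially the paper's opening move: apply condition~(2) of Proposition~\ref{explicitSD} to $m=\eqref{smallest}$ and use uniqueness of minimal sums to force $r_{\iota\circ\varphi^j}\equiv r_\iota\pmod p$ whenever $\alpha_j\neq0$; since all $r$'s lie in $[0,p]$, the only surviving ambiguity is when $r_\iota\in\{0,p\}$.

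Your second stage, however, has a genuine gap, and it is exactly where the key idea of the paper's argument lies. You propose to apply $\varphi$ to $m$, i.e.\ to use only the weak half of condition~(1) (``$m\in M\Rightarrow\varphi(m)\in M$''). But $\varphi$ shifts the support of $m$ \emph{backward}: $\varphi(m)$ acquires a term $u^{r_{\iota\circ\varphi^{-1}}}e_{\iota\circ\varphi^{-1}}$ lying outside the original window $\{\iota,\ldots,\iota\circ\varphi^I\}$, with a unit coefficient if $r_{\iota\circ\varphi^{-1}}=0$. Iterating this direction keeps enlarging the window and imports $r$-values you have no control over, so minimality cannot be invoked; and your suggested induction on $\#\{j:\alpha_j\neq0,\ r_{\iota\circ\varphi^j}=p\}$ is never shown to actually decrease under the combinations you form. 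You flag this yourself as the ``main obstacle,'' but the obstruction is structural rather than a matter of bookkeeping.

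The paper goes in the opposite direction: it constructs a $\varphi$-\emph{preimage} $z=u^{\gamma_0}e_{\iota\circ\varphi}+\sum_{0<j\le I}u^{\gamma_j}\alpha_j e_{\iota\circ\varphi^{j+1}}$ with $\gamma_j=0$ if $r_{\iota\circ\varphi^j}=p$ and $\gamma_j=1$ if $r_{\iota\circ\varphi^j}=0$, so that $\varphi(z)=u^p m$. The support of $z$ is the window $\{\iota\circ\varphi,\ldots,\iota\circ\varphi^{I+1}\}$, \emph{still} of width $I$. Membership $z\in M$ then follows from the strong half of condition~(1): since $ue_\theta\in M$ for all $\theta$, one has $uz\in M$, and $\varphi(uz)=u^{2p}m\in u^{p+1}M$ forces $uz\in uM$, i.e.\ $z\in M$. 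Now subtract from $z$ the terms with $\gamma_j=1$ (each a multiple of $ue_\theta\in M$): the remaining $\mathbb{F}$-linear combination is supported on a strictly shorter window because the assumed mixing of $\gamma_j=0$ and $\gamma_j=1$ among indices with $\alpha_j\neq0$ knocks out an endpoint, contradicting minimality. It is this explicit preimage with $\{0,1\}$-exponents — not forward iteration of $\varphi$ — that makes the argument close, and it is the piece your sketch is missing.
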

\begin{proof}
	Uniqueness of minimal elements and condition (2) of Proposition~\ref{explicitSD} implies $r_{\iota} \equiv r_{\iota \circ \varphi^i}$ modulo~$p$. Since each $r_{\iota \circ \varphi^j} \in [0,p]$ this will be an equality, except possibly if $r_\iota = 0$ or $p$. In this case set 
	$$
	z =  u^{\gamma_0} e_{\iota \circ \varphi} + \sum_{0 <j \leq I} u^{\gamma_{j}} \alpha_{j} e_{\iota \circ \varphi^{j+1}}
	$$
	where $\gamma_{j} = 0$ if $r_{\iota \circ \varphi^j} = p$ and $\gamma_{j} = 1$ if $r_{\iota \circ \varphi^j} = 0$. Then $\varphi(z)$ equals $u^p$ times \eqref{smallest} and so condition (1) of Proposition~\ref{explicitSD} implies $z \in M$. Thus either all $\gamma_i =0$ or all equal $1$, otherwise we would obtain an element of $M$ contradicting the minimality of \eqref{smallest}.
\end{proof}	

The next proposition is where we use that $T(M)= T(f_*N)$ is irreducible.
\begin{proposition}\label{chooselambda}
	There exists $\lambda \in \operatorname{Hom}_{\mathbb{F}_p}(l,\mathbb{F})$ such that 
	\begin{enumerate}
		\item If $\theta \in X$ and $\theta \circ \varphi \not\in X$ then $r_\theta >0$.
		\item If $\theta \not\in X$ and $\theta \circ \varphi \in X$ then $r_{\theta} = 0$.
		\item If $\theta \in X$ and $e_{\theta \circ \varphi} \not\in M$ then $0 \leq r_\theta \leq 1$. In particular this holds if $\theta \circ \varphi \in X$.
	\end{enumerate}
\end{proposition}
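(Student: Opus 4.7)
The proposition requires finding $\lambda \in \operatorname{Hom}_{\mathbb{F}_p}(l,\mathbb{F})$ whose associated ordering $<_\lambda$ yields a set $X = X_\lambda$ in ``Frobenius-compatible'' position. Let $V$ denote the $\mathbb{F}$-subspace of those $\mathbb{F}$-linear combinations $\sum \alpha_\theta e_\theta$ (with $\alpha_\theta \in \mathbb{F}$) lying in $M$; then $X_\lambda$ is precisely the complement of the pivot set of $V$ with respect to $<_\lambda$. Hence $|X_\lambda|$ is independent of $\lambda$, though its position along the Frobenius cycle $\theta \mapsto \theta \circ \varphi$ depends strongly on the choice.

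My plan is to select $\lambda$ via an extremal condition tied to Lemma~\ref{equal}. Among all indices $\iota$ that occur as the leading index of a minimal sum $e_\iota + \sum_{0 < j \leq I} \alpha_j e_{\iota \circ \varphi^j} \in M$, I would take $\lambda = \iota$ achieving the shortest length $I$, breaking any ties by further minimizing $r_\iota$. Lemma~\ref{equal} ensures the $r$-values along this minimal sum are all equal, aligning the pivot structure of $X_\lambda$ with the Frobenius cycle in a controlled way. Such minimal sums exist because $V$ is nonzero (otherwise $M$ would sit inside $u f_*N$, contradicting $M[\tfrac{1}{u}] = (f_*N)[\tfrac{1}{u}]$).

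For conditions (1) and (2), I would analyse boundary transitions of $X$ along the Frobenius cycle via the interaction of $\varphi$ with Lemma~\ref{unique}. Given $\theta \in X$ with $\theta \circ \varphi \notin X$, Lemma~\ref{unique} gives a unique reduced expression $e_{\theta \circ \varphi} + \sum_{\kappa \in X,\,\kappa <_\lambda \theta \circ \varphi} \alpha_\kappa e_\kappa \in M$ with all $r_\kappa \equiv r_{\theta \circ \varphi} \bmod p$. Applying $\varphi$ and using condition (1) of Proposition~\ref{explicitSD}, this produces $u^{r_\theta} e_\theta + \sum \alpha_\kappa u^{r_{\kappa \circ \varphi^{-1}}} e_{\kappa \circ \varphi^{-1}} \in M$. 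Under the assumption $r_\theta = 0$, I would isolate the constant-coefficient part using condition (2) of Proposition~\ref{explicitSD} (separating by residue class mod $p$) to extract a reduced expression with leading index $\theta$ supported on $X$, contradicting $\theta \in X$; this forces (1). Condition (2) is handled dually, by considering the reduced expression for $\theta \notin X$ and showing that $r_\theta \geq 1$ would make the $\varphi$-image too divisible by $u$ to be compatible with $\theta \circ \varphi \in X$.

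The main obstacle is condition (3). Here the irreducibility hypothesis enters decisively: a failure would, via repeated $\varphi$-application and the residue-class splitting of Proposition~\ref{explicitSD}(2), allow one to construct a proper $\varphi$-stable $\mathbb{F}[[u]]$-submodule of $M$ whose $T$-image provides a nontrivial decomposition of $T(M) \cong \operatorname{Ind}_{L_\infty}^{K_\infty} T(N)$. Using Proposition~\ref{rankonesapplyT} together with Lemma~\ref{irred}, such a decomposition would correspond to a Galois-stable factor of $\prod_\theta \omega_\theta^{-r_\theta}$ through a proper unramified sub-extension of $L/K$, contradicting irreducibility. Making this argument work for the extremal $\lambda$ chosen above --- in particular, handling the edge case $r_\theta = p$ (residue $0 \bmod p$) where condition (2) of Proposition~\ref{explicitSD} conflates $r = 0$ and $r = p$ positions --- is the principal technical difficulty I anticipate.
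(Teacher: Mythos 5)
Your proposal contains a significant structural misdiagnosis: you identify condition (3) as the ``main obstacle'' requiring irreducibility, but in fact (3) is the easiest of the three and holds for \emph{every} choice of $\lambda$. The paper proves it in one line: if $e_{\theta\circ\varphi} \notin M$, then $\varphi(ue_{\theta\circ\varphi}) = u^{r_\theta + p - 1}(ue_\theta)$ lies in $M$ (by Proposition~\ref{explicitSD}(1)) but, since $\theta \in X$ forces $e_\theta \notin M$, this element is not in $u^{p+1}M$, so $r_\theta + p - 1 \leq p$. No irreducibility is needed. Similarly, condition (2) is elementary once $r_\lambda = 0$: a direct $\varphi$-application to the reduced expression from Lemma~\ref{unique}, twisted by $u$ on the $r_\kappa = 0$ coordinates, shows $\theta \notin X$ with $r_\theta > 0$ forces $\theta\circ\varphi \notin X$. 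The genuinely hard part is condition (1), and it is there --- not in (3) --- that irreducibility ultimately enters.

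The mechanism of the irreducibility argument is also different from what you propose. You suggest constructing a proper $\varphi$-stable submodule of $M$ and deriving a decomposition of $T(M)$. The paper instead argues by iteration: for the leading index $\iota$ of a minimal sum of length $I$, it shows that either $\lambda = \iota$ works, or the ``Frobenius shift'' $e_{\iota\circ\varphi} + \sum \alpha_j e_{\iota\circ\varphi^{j+1}}$ also lies in $M$ and is again a minimal sum. Iterating, if no $\iota \circ \varphi^i$ works as $\lambda$, then by Lemma~\ref{equal} one gets $r_{\iota'} = r_{\iota'\circ\varphi^I}$ for \emph{all} $\iota'$, whence the inducing character $\omega = \prod \omega_\iota^{-r_\iota}$ satisfies $\omega = \omega^{p^I}$. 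For $I > 0$ this contradicts irreducibility of $\operatorname{Ind}_L^K T(N)$; for $I = 0$ it gives $X = \emptyset$ and the conditions hold vacuously. Your extremal tiebreak (minimize $I$, then minimize $r_\iota$) is not obviously sufficient: a minimal sum with $r_\iota = 0$ and failing condition (1) may produce a Frobenius-shifted minimal sum with a different $r$-value, and nothing a priori singles out one such $\iota$ over another by an $r$-value comparison. The iterative search, not a one-shot extremal choice, is what makes the argument close. So while your opening move (take $\lambda$ from a minimal sum, invoke Lemma~\ref{equal}) is aligned with the paper, the allocation of difficulty across the three conditions, the role of irreducibility, and the termination mechanism all need to be reworked.
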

\begin{proof}
	First we show (3) holds for any choice of $\lambda$. If $e_{\theta \circ \varphi} \not\in M$ then condition (1) of Proposition~\ref{explicitSD} implies $\varphi(ue_{\theta \circ \varphi}) = u^{r_\theta + p - 1} (ue_\theta) \not\in u^{p+1}M$. If $\theta \in X$ then $e_\theta \not\in M$ so $r_\theta + p -1 \leq p$, and $r_\theta \leq 1$.
	
	Next we show (2) holds whenever $r_\lambda =0$. Suppose $\theta \not\in X$ and $r_\theta > 0$ (we're assuming that $r_\lambda = 0$ so $\theta \neq \lambda$). We'll show $\theta \circ \varphi \not\in X$. Choose $e_\theta + \sum_{\kappa <_\lambda \theta} \alpha_{\kappa}e_\kappa \in M$ as in Lemma~\ref{unique}. Set $z = e_{\theta \circ \varphi} + \sum_{r_\kappa \neq 0} \alpha_{\kappa} e_{\kappa \circ \varphi} + u\sum_{r_\kappa = 0} \alpha_{\kappa} e_{\kappa \circ \varphi}$. Using that $r_\theta \equiv r_\kappa$ modulo~$p$ and $r_\theta >0$ we see that $\varphi(z) = u^{r_\theta}( e_\theta + \sum \alpha_{\kappa}e_\kappa)$. Condition (1) of Proposition~\ref{explicitSD} implies $z \in M$. Since $\theta \neq \lambda$, if $\kappa <_\lambda \theta$ then $\kappa \circ \varphi <_\lambda \theta \circ \varphi$. Therefore $z$ shows $\theta \circ \varphi \not\in X$.
	
	Now choose a minimal sum as in \eqref{smallest} (if none exists then we must have $M = u(f_*N)$ and so $X = \operatorname{Hom}_{\mathbb{F}_p}(l,\mathbb{F})$, in which case conditions (1), (2), and (3) hold vacuously). We are going to show that either $\lambda = \iota$ satisfies the conditions of the proposition or $e_{\iota \circ \varphi} + \sum \alpha_{j} e_{\iota \circ \varphi^{j+1}} \in M$. Let us explain why this implies the proposition. If there exists no $\lambda\in \operatorname{Hom}_{\mathbb{F}_p}(l,\mathbb{F})$ satisfying conditions (1)-(3) then it would follow that $e_{\iota \circ \varphi^i} + \sum \alpha_{j} e_{\iota \circ \varphi^{j+i}} \in M$ for every $i \geq 0$. Lemma~\ref{equal} then implies $r_{\iota'} = r_{\iota' \circ \varphi^I}$ for every $\iota' \in \operatorname{Hom}_{\mathbb{F}_p}(l,\mathbb{F})$. If $\omega$ is the character through which $G_{K}$ acts on $T(N)$ we then have $\omega = \prod_\iota \omega_{\iota}^{-r_\iota} = \prod \omega_{\iota \circ \varphi^I}^{-r_\iota} = \omega^{p^I}$. If $I >0$ this contradicts the irreducibility of $T(M) = \operatorname{Ind}_L^K T(N)$. If $I = 0$ it follows that every $e_{\iota'} \in M$, so $X = \emptyset$ and conditions (1)-(3) hold vacuously. 
	
	Set $z = e_{\iota \circ \varphi} + \sum \alpha_{j} e_{\iota \circ \varphi^{j+1}}$. If $r_{\iota} >0$ then $z \in M$ follows without any further assumptions; Lemma~\ref{equal} says that $r_{\iota \circ \varphi^j} = r_\iota$ whenever $\alpha_{\iota \circ \varphi^j} \neq 0$, and so $\varphi(z) \in u^{r_\iota}M$. Thus $z \in M$ by condition (1) of Proposition~\ref{explicitSD}. If instead $r_{\iota} = 0$ set $\lambda = \iota$. The first and second paragraph of this proof shows that (2) and (3) hold. Thus (1) cannot hold, and so there must exist $\theta \in X$ with $\theta \circ \varphi \not\in X$ and $r_{\theta} = 0$. We use this to show $z \in M$. If $\theta = \lambda$ then $\lambda \circ \varphi \not\in X$ which means $e_{\lambda \circ \varphi} \in M$; by minimality $z = e_{\lambda \circ \varphi}$ and we are done. Let us therefore assume $\theta \neq \lambda$. Consider the unique
	$$
	f_{\theta \circ \varphi} = e_{\theta \circ \varphi} + \sum_{\kappa <_{\lambda} \theta \circ \varphi} \alpha_{\kappa}e_\kappa
	$$
	from Lemma~\ref{unique}. As $r_\theta = 0$ and $\kappa <_{\lambda} \theta \circ \varphi$ implies $\kappa \circ \varphi^{-1} <_{\lambda} \theta$, except if $\kappa = \lambda \circ \varphi$, we obtain
	$$
	\varphi(f_{\theta \circ \varphi}) = e_\theta + \alpha_{\lambda \circ \varphi} e_\lambda +  \sum_{\kappa \circ \varphi^{-1}<_{\lambda} \theta} \alpha_{\kappa \circ \varphi} u^{r_{\kappa \circ \varphi^{-1}}} e_{\kappa \circ \varphi^{-1}} \in M
	$$
	Removing those terms with $r_{\kappa \circ \varphi^{-1}} > 0$ and re-indexing, we obtain 
	\begin{equation}\label{contradiction}
	e_\theta + \alpha e_{\lambda} + \sum_{\kappa <_{\lambda} \theta} \beta_{\kappa} e_{\kappa} \in M
	\end{equation}
	for some $\alpha, \beta_{\kappa} \in \mathbb{F}$. If $\alpha = 0$ then \eqref{contradiction} contradicts the assumption that $\theta \in X$. If we write $\theta = \lambda \circ \varphi^J$ and $J<I$ then \eqref{contradiction} contradicts the assumption that \eqref{smallest} is minimal. If $I < J$ then the different between \eqref{contradiction} and $\alpha$ multiplied by \eqref{smallest} again contradicts the assumption that $\theta \in X$. Thus $I = J$. The uniqueness of minimal elements then implies \eqref{contradiction} equals $\alpha$ times \eqref{smallest}. Thus $ z = \frac{f_{\theta \circ \varphi}}{\alpha} \in M$ which completes the proof.
 \end{proof}
		\begin{proof}[End of the proof of Proposition~\ref{explicitSD}]
			We have to show $M$ is strongly divisible. Fix $\lambda$ as in Proposition~\ref{chooselambda} and for $\theta \in \operatorname{Hom}_{\mathbb{F}_p}(l,\mathbb{F})$ set
			$$
			f_\theta = \begin{cases}
			e_{\theta} + \sum \alpha_{\kappa} e_{\kappa} \text{ as in Lemma~\ref{unique}} & \text{if $\theta \not\in X$} \\
			ue_{\theta} & \text{if $\theta \in X$} 
			\end{cases}
			$$
			For $\tau \in \operatorname{Hom}_{\mathbb{F}_p}(k,\mathbb{F})$ the $f_\theta$ with $\theta|_k = \tau$ form an $\mathbb{F}[[u]]$-basis of $M_\tau$. To see this let $W \subset M_\tau$ be the subspace they span. It is easy to see that if $\theta|_k = \tau$ then $ue_\theta \in W$. It therefore suffices to show any $\sum \alpha_\theta e_{\theta} \in M_\tau$ with $\alpha_{\theta} \in \mathbb{F}$ is in $W$. We see that $\sum \alpha_{\theta}e_\theta - \sum_{\theta \not\in X} \alpha_{\theta}f_\theta$ is an $\mathbb{F}$-linear combination of $e_\theta$ with $\theta \in X$, and is contained in $M$. Such a linear combination must be zero (cf. the proof of Lemma~\ref{unique}) so $W = M_\tau$, as claimed.
			
			For each $\theta$ we now construct elements $g_{\theta \circ \varphi} \in M_{\theta \circ \varphi|_k}, h_\theta \in M_{\theta|_k}$ so that $\varphi(g_{\theta \circ \varphi}) = u^{r_\theta + ps_{\theta \circ \varphi} - s_\theta} h_\theta$ where
			\begin{equation}\label{stheta}
			s_\theta =\begin{cases}
			1 & \text{if $\theta \in X$} \\
			0 & \text{if $\theta \notin X$}
			\end{cases}
			\end{equation}
			We do this on a case-by-case basis.
			\begin{itemize}
				\item Suppose $\theta \not\in X$ and $\theta \circ \varphi \in X$. Set $h_\theta := f_\theta = e_\theta + \sum_{\kappa <_\lambda \theta, \kappa \in X} \alpha_{\kappa} e_{\kappa}$. (2) of Proposition~\ref{chooselambda} implies $r_\theta = 0$, so each $r_\kappa$, being congruent to $r_\theta$ modulo~$u$, equals $0$ or $p$. If $r_\kappa = p$ then (3) of Proposition~\ref{chooselambda} implies $e_{\kappa \circ \varphi} \in M$, and so $\kappa \circ \varphi \not\in X$ and $f_{\kappa \circ \varphi} = e_{\kappa \circ \varphi}$. If $r_\kappa = 0$ then (1) of Proposition~\ref{chooselambda} implies $\kappa \circ \varphi \in X$. Thus
				$$
				g_{\theta \circ \varphi} := f_{\theta \circ \varphi} + \sum_{\kappa <_\lambda \theta, \kappa \in X} \alpha_{\kappa} f_{\kappa \circ \varphi} \in M_{\theta \circ \varphi|_k}
				$$
				is such that $\varphi(g_{\theta \circ \varphi}) = u^ph_\theta$.
				\item Suppose $\theta \not\in X$, $\theta \circ \varphi \not\in X$ and $r_\theta  =0$. Set $g_{\theta \circ \varphi} := f_{\theta \circ \varphi} = e_{\theta \circ \varphi} + \sum_{\kappa  <_\lambda \theta \circ \varphi,\kappa \in X} \alpha_{\kappa} e_{\kappa}$. Since $\kappa \in X$, if $\kappa \circ \varphi^{-1} \not\in X$ then $r_{\kappa \circ \varphi^{-1}} = 0$ by (2) of Proposition~\ref{chooselambda}. By (3) of Proposition~\ref{chooselambda}, if $\kappa \circ \varphi^{-1} \in X$ then $r_{\kappa \circ \varphi^{-1}} \in [0,1]$. Therefore the difference between $\varphi(g_{\theta \circ \varphi})$ and
				$$
				h_\theta := f_{\theta} + \sum_{\kappa <_\lambda \theta \circ \varphi,\kappa \circ \varphi^{-1} \not\in X} \alpha_{\kappa} f_{\kappa \circ \varphi^{-1}} + \sum_{\kappa <_\lambda \theta \circ \varphi,\kappa \circ \varphi \in X,r_{\kappa \circ \varphi^{-1}} = 1} \alpha_{\kappa} f_{\kappa \circ \varphi^{-1}}
				$$ 
				is an $\mathbb{F}$-linear combination of $e_\kappa$ with $\kappa \in X$. Since this $\mathbb{F}$-linear combination is contained in $M$ it must be zero (cf. the proof of Lemma~\ref{unique}). Therefore $\varphi(g_{\theta \circ \varphi}) = h_\theta$.
				\item Suppose $\theta \not\in X$, $\theta \circ \varphi \not\in X$ and $r_\theta  > 0$. Set $h_\theta := f_\theta = e_\theta + \sum_{\kappa <_\lambda \theta,\kappa \in X} \alpha_{\kappa}e_\kappa$. Each $r_\kappa \equiv r_\theta$ modulo~$p$ and so $\varphi$ sends
				$$
				e_{\theta \circ \varphi} + \sum_{r_\kappa > 0} \alpha_{\kappa}e_{\kappa \circ \varphi} + u\sum_{r_\kappa = 0} \alpha_{\kappa} e_{\kappa \circ \varphi}
				$$
				onto $u^{r_\theta} h_\theta$. As $r_\theta>0$ this displayed sum is contained in $M$ by condition (1) of Proposition~\ref{explicitSD}. We claim this displayed sum is equal to
				$$
				g_{\theta \circ \varphi} := f_{\theta \circ \varphi} + \sum_{\kappa <_\lambda \theta, \kappa \circ \varphi \not\in X} \alpha_{\kappa} f_{\kappa \circ \varphi} + \sum_{\kappa <_\lambda \theta, \kappa \circ \varphi \in X, r_\kappa  =0} \alpha_{\kappa} f_{\kappa \circ \varphi}
				$$
				To see this note that, by (1) of Proposition~\ref{chooselambda}, if $r_\kappa = 0$ then $\kappa \circ \varphi \in X$ and if $r_{\kappa \circ \varphi} \not\in X$ then $r_\kappa > 0$. From this it follows that the difference between these two sums, which is an element of $M$, is an $\mathbb{F}$-linear combination of $e_\kappa$ with $\kappa \in X$. This difference is therefore zero, and so $\varphi(g_{\theta \circ \varphi}) = u^{r_\theta} h_\theta$.
				\item Suppose $\theta \in X$ and $\theta \circ \varphi \not\in X$. Set $g_{\theta \circ \varphi} := f_{\theta \circ \varphi} = e_{\theta \circ \varphi} + \sum_{\kappa<_\lambda \theta \circ \varphi,\kappa \in X} \alpha_{\kappa}e_{\kappa}$, and set 
				$$
				h_\theta := f_\theta + \sum_{\kappa <_\lambda \theta \circ \varphi,\kappa \circ \varphi^{-1} \not\in X} \alpha_{\kappa} f_{\kappa \circ \varphi^{-1}} + \sum_{\kappa<_\lambda \theta \circ \varphi,\kappa \circ \varphi^{-1} \in X, r_{\kappa \circ \varphi^{-1}} = 1} \alpha_{\kappa} f_{\kappa \circ \varphi^{-1}}
				$$
				We claim $\varphi(g_{\theta \circ \varphi}) = u^{r_\theta -1}h_\theta$. If $e_{\theta \circ \varphi} \in M$ then this is clear since $g_{\theta \circ \varphi} = e_{\theta \circ \varphi}$ and $h_\theta = ue_\theta$. If $e_{\theta \circ \varphi} \not\in M$ then (1) and (3) of Proposition~\ref{chooselambda} implies $r_\theta = 1$, so we have to show $\varphi(g_{\theta \circ \varphi}) = h_\theta$. Proposition~\ref{chooselambda} tells us $\kappa \in X$ and $\kappa \circ \varphi^{-1} \not\in X$ implies $r_{\kappa \circ \varphi^{-1}} = 0$, while if $\kappa \in X$ and $\kappa \circ \varphi^{-1} \in X$ then $r_{\kappa \circ \varphi^{-1}} \in [0,1]$. Using these two facts we see that the difference between $\varphi(g_{\theta \circ \varphi})$ and $h_\theta$ is an $\mathbb{F}$-linear combination of $e_\kappa$ with $\kappa \in X$. Since this difference is contained in $M$ it must be zero.
				\item Finally, if $\theta \in X$ and $\theta \circ \varphi \in X$ set $g_{\theta \circ \varphi} := f_{\theta \circ \varphi}$ and $h_{\theta} := f_\theta$. Then $\varphi(g_{\theta \circ \varphi}) = u^{r_\theta + p-1} h_\theta$.
			\end{itemize}
		To finish the proof it suffices to show that for $\theta$ with $\theta|_k = \tau$, the $g_{\theta \circ \varphi}$ form an $\mathbb{F}[[u]]$-basis of $M_{\tau \circ \varphi}$, and the $h_\theta$ form an $\mathbb{F}[[u]]$-basis of $M_\tau$. If $H$ is the $\mathbb{F}[[u]]$-linear endomorphism of $M_\tau$ sending $f_\theta$ onto $h_\theta$ then $H - \operatorname{Id}$ sends $f_\theta$ onto $\mathbb{F}$-linear combinations of $f_{\kappa \circ \varphi^{-1}}$ with $\kappa <_\lambda \theta \circ \varphi$. Hence $H- \operatorname{Id}$ is nilpotent, $H$ is an automorphism, and the $h_\theta$ form an $\mathbb{F}[[u]]$-basis as claimed. A similar observation shows the $g_{\theta \circ \varphi}$ also form an $\mathbb{F}[[u]]$-basis.
		\end{proof}
		
		Using Remark~\ref{riremark} we deduce:
	\begin{corollary}\label{weights}
		With $s_\theta$ as in \eqref{stheta}
		$$
		\operatorname{Weight}_\tau(M) = \lbrace r_\theta + ps_{\theta \circ \varphi} - s_\theta \mid \theta|_k = \tau \rbrace
		$$
	\end{corollary}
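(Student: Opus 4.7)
My plan is to read the corollary directly off the bases that were produced in the closing paragraph of the proof of Proposition~\ref{explicitSD}. That proof constructs, for every $\theta \in \operatorname{Hom}_{\mathbb{F}_p}(l,\mathbb{F})$, elements $g_{\theta \circ \varphi} \in M_{\tau \circ \varphi}$ and $h_\theta \in M_\tau$ (where $\tau = \theta|_k$) satisfying
$$
\varphi(g_{\theta \circ \varphi}) = u^{r_\theta + p s_{\theta \circ \varphi} - s_\theta}\, h_\theta,
$$
and moreover verifies, via the nilpotence of the transition matrices on each isotypic component, that the subfamilies $(h_\theta)_{\theta|_k = \tau}$ and $(g_{\theta \circ \varphi})_{\theta|_k = \tau}$ are $\mathbb{F}[[u]]$-bases of $M_\tau$ and $M_{\tau \circ \varphi}$ respectively.

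Next I would observe that because $\varphi_M$ is $\varphi$-semilinear it sends any $\mathbb{F}[[u]]$-basis of $M_{\tau \circ \varphi}$ to an $\mathbb{F}[[u^p]]$-basis of its image $\varphi(M_{\tau \circ \varphi}) = \varphi(M)_\tau$. Consequently the family $\bigl(u^{r_\theta + p s_{\theta \circ \varphi} - s_\theta}\, h_\theta\bigr)_{\theta|_k = \tau}$ is an $\mathbb{F}[[u^p]]$-basis of $\varphi(M)_\tau$, and paired with the basis $(h_\theta)_{\theta|_k = \tau}$ of $M_\tau$ this is precisely the datum appearing in condition~(2) of Lemma~\ref{equivSDO}.

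Finally I would invoke Remark~\ref{WtremarkO}, the $\mathcal{O}$-coefficient analogue of Remark~\ref{riremark}, which asserts that the multiset of exponents occurring in such a presentation equals $\operatorname{Weight}_\tau(M)$. This immediately yields
$$
\operatorname{Weight}_\tau(M) = \lbrace r_\theta + p s_{\theta \circ \varphi} - s_\theta \mid \theta|_k = \tau \rbrace,
$$
as required. I do not anticipate any substantive obstacle: all the real work—choosing $\lambda$, distinguishing the five case-by-case configurations, and checking that the resulting $g_{\theta \circ \varphi}$'s and $h_\theta$'s form bases—has already been discharged inside the proof of Proposition~\ref{explicitSD}, so the corollary amounts to bookkeeping the exponents produced there.
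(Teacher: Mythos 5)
Your proposal is correct and follows the same route the paper takes: read off the exponents from the bases $(g_{\theta\circ\varphi})$, $(h_\theta)$ constructed at the end of the proof of Proposition~\ref{explicitSD}, and apply Remark~\ref{WtremarkO} (the $\mathcal{O}$-coefficient version of Remark~\ref{riremark}, which is what the paper actually cites). The only thing you make explicit that the paper leaves implicit is the observation that $\varphi$-semilinearity transports an $\mathbb{F}[[u]]$-basis of $M_{\tau\circ\varphi}$ to an $\mathbb{F}[[u^p]]$-basis of $\varphi(M)_\tau$, which is a harmless clarification.
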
 
		\subsection{Putting everything together}
		 
		Applying what we've shown so far in this subsection gives:
		\begin{proposition}\label{main}
			Let $M \in \operatorname{Mod}^{\operatorname{SD}}_k(\mathcal{O})$ with $T(M)$ irreducible. Then there exist integers $\widetilde{r_{\theta}}$ indexed over $\theta \in \operatorname{Hom}_{\mathbb{F}_p}(l,\mathbb{F})$ such that (i):
			$$
			T(M) \otimes_{\mathbb{F}} \overline{\mathbb{F}}_p = \psi \otimes \operatorname{Ind}_{L_\infty}^{K_\infty} (\prod_{\theta} \omega_{\theta}^{-\widetilde{r_{\theta}}})
			$$
			for some unramified character $\psi$ and for $L_\infty = L(\pi^{1/p^\infty})$ with $L$ an unramified extension $K$, and such that (ii):
			$$
			\operatorname{Weight}_\tau(M) = \lbrace \widetilde{r_\theta} \mid \theta|_k = \tau \rbrace
			$$
		\end{proposition}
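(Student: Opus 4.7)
The plan is to combine the structure theorem for irreducible mod $p$ representations with the detailed picture of submodules of $f_*N$ established in Subsections~\ref{irredsub}--\ref{subirred2}, and then reconcile the raw character exponents produced by the rank one piece $N$ with the genuine weights of $M$ via a short cyclic cancellation.

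To begin, since both (i) and (ii) concern data that behave well under extension of coefficients, I would first enlarge $\mathcal{O}$ so that its residue field $\mathbb{F}$ contains every residue subfield of $\overline{\mathbb{F}}_p$ that will appear below. As $T(M)$ is an irreducible $\mathbb{F}$-representation of $G_{K_\infty}$ and wild inertia (being pro-$p$) acts trivially on it, $T(M)$ factors through the tame quotient of $G_{K_\infty}$, which is identified with that of $G_K$ by Lemma~\ref{intyisom}. The argument of Lemma~\ref{irred} then produces a finite unramified extension $L/K$ (with residue field $l \subset \mathbb{F}$) and a continuous character $\chi\colon G_{L_\infty} \to \mathbb{F}^\times$ such that $T(M) \cong \operatorname{Ind}_{L_\infty}^{K_\infty} \chi$. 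Lemma~\ref{intermsofN} then supplies $N \in \operatorname{Mod}^{\operatorname{SD}}_l(\mathcal{O})$, free of rank one over $l[[u]] \otimes_{\mathbb{F}_p} \mathbb{F}$, together with $T(N) = \chi$ and a $\varphi$-equivariant inclusion $M \hookrightarrow f_*N$ becoming an isomorphism after inverting $u$. By Lemma~\ref{rankone} I may put $N$ in the form $\varphi_N(1) = (x) \sum_\theta u^{r_\theta} e_\theta$ with $r_\theta \in [0,p]$ and some $x \in \mathbb{F}^\times$, and Proposition~\ref{rankonesapplyT} gives $\chi = (\psi_x \prod_\theta \omega_\theta^{-r_\theta})|_{G_{L_\infty}}$.

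At this point Corollary~\ref{weights}, the main technical output of Subsection~\ref{subirred2}, yields $\operatorname{Weight}_\tau(M) = \lbrace r_\theta + p s_{\theta \circ \varphi} - s_\theta \mid \theta|_k = \tau \rbrace$ for some $s_\theta \in \lbrace 0,1\rbrace$. I would therefore set $\widetilde{r_\theta} := r_\theta + p s_{\theta \circ \varphi} - s_\theta$, making (ii) tautological. For (i) it remains to verify that this shift does not change the character on inertia, i.e.\ that $\prod_\theta \omega_\theta^{s_\theta - p s_{\theta \circ \varphi}} = 1$ on $I^{\mathrm{t}}$. Using the identity $\omega_\theta^p = \omega_{\theta \circ \varphi}$ established in Section~\ref{inert}, I obtain
$$
\prod_\theta \omega_\theta^{-p s_{\theta \circ \varphi}} = \prod_\theta \omega_{\theta \circ \varphi}^{-s_{\theta \circ \varphi}} = \prod_\theta \omega_\theta^{-s_\theta}
$$
after reindexing $\theta \mapsto \theta \circ \varphi^{-1}$ in the last equality, and this exactly cancels the $\prod_\theta \omega_\theta^{s_\theta}$ factor. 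Hence $\chi = (\psi_x \prod_\theta \omega_\theta^{-\widetilde{r_\theta}})|_{G_{L_\infty}}$, and inducing back yields (i) with $\psi := \psi_x$ unramified. The genuine work is already concentrated in Corollary~\ref{weights}; granting that input, the only delicate point here is the cyclic cancellation above, which guarantees that the two parametrisations of the inertial character agree.
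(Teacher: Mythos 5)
Your proposal follows the paper's route closely, and the cyclic cancellation $\prod_\theta \omega_\theta^{p s_{\theta\circ\varphi}-s_\theta}=1$ that you spell out is exactly the step the paper leaves tacit (and arguably needs to make explicit) when it asserts $\chi = T(N) = \prod_\theta\omega_\theta^{-\widetilde{r_\theta}}$. That part of your argument is correct and a useful addition.

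However, there is a genuine gap. Corollary~\ref{weights} is established only under the standing hypothesis of Subsections~\ref{irredsub}--\ref{subirred2}, namely that the rank one module $N$ has $\varphi_N(1) = \sum_\theta u^{r_\theta} e_\theta$ with the unit $x$ from Lemma~\ref{rankone} equal to $1$ (the paper flags this explicitly just before Proposition~\ref{explicitSD}). You write $N$ in the general form $\varphi_N(1) = (x)\sum_\theta u^{r_\theta}e_\theta$ with $x\in\mathbb{F}^\times$ arbitrary and then apply Corollary~\ref{weights} directly. When $x\neq 1$ the corollary does not apply as stated: the unit $x$ genuinely affects the $\varphi$-stability conditions (1) and (2) of Proposition~\ref{explicitSD} for linear combinations involving $e_{\theta_0}$, and the entire analysis of Subsections~\ref{irredsub}--\ref{subirred2}, including the construction of $X$, the elements $f_\theta$, $g_{\theta\circ\varphi}$, $h_\theta$, and ultimately Corollary~\ref{weights}, is carried out only for $x=1$. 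The paper fills this gap by replacing $M$ with $\widetilde{M} = \operatorname{Hom}(\operatorname{ur}_x, M)^{\mathcal{O}}$ (Construction~\ref{Ohom}): one checks that $\widetilde{M}\in\operatorname{Mod}^{\operatorname{SD}}_k(\mathcal{O})$ has the same $\operatorname{Weight}_\tau$ as $M$, that $T(\widetilde{M}) = \psi_x^{-1}\otimes T(M) = \operatorname{Ind}_{L_\infty}^{K_\infty}(\psi_x^{-1}\chi)$, and that the corresponding rank one $N'$ has unit $1$; one then runs the argument for $\widetilde{M}$ and twists back. You need to insert this reduction (or, alternatively, redo the bookkeeping of Subsection~\ref{subirred2} carrying the unit $x$ through and checking that it drops out of the weight computation) before invoking Corollary~\ref{weights}.
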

	\begin{proof}
		Lemma~\ref{intermsofN} produces a rank one $N \in \operatorname{Mod}^{\operatorname{SD}}_k(\mathcal{O})$, which we assume is as in Lemma~\ref{rankone}, together with an embedding $M \hookrightarrow f_*N$. We want to apply the results of Subsections~\ref{irredsub} and~\ref{subirred2}, so we require the $x \in \mathbb{F}^\times$ appearing in the definition of $N$ to be $1$. Let us explain how to reduce to this case. Let $\operatorname{ur}_x \in \operatorname{Mod}^{\operatorname{SD}}_k(\mathcal{O})$ be the rank one object given by
		$$
		\operatorname{ur}_x = k[[u]] \otimes_{\mathbb{F}_p} \mathbb{F}, \qquad \varphi_{\operatorname{ur}_x}(1) = x e_{\tau_0} + \sum_{\tau \neq \tau_0} e_\tau
		$$
		Set $\widetilde{M} = \operatorname{Hom}(\operatorname{ur}_x,M)^{\mathcal{O}}$ (recall Construction~\ref{Ohom}). One easily checks that $\widetilde{M} \in \operatorname{Mod}^{\operatorname{SD}}_k(\mathcal{O})$ and that $\operatorname{Weight}_\tau(\widetilde{M}) = \operatorname{Weight}_\tau(M)$ for each $\tau$ by verifying that condition (2) of Lemma~\ref{equivSDO} holds. The last sentence of Construction~\ref{Ohom} implies
		$$
		T(\widetilde{M}) = \operatorname{Hom}(\psi_x,\operatorname{Ind}_{L_\infty}^{K_\infty} \chi) = \operatorname{Ind}_{L_\infty}^{K_\infty} (\psi_x^{-1} \chi)
		$$
		Thus if the proposition holds for $\widetilde{M}$ it holds for $M$. Thus, assuming $x =1$ so that we can apply Corollary~\ref{weights}, we have $\operatorname{Weight}_\tau(M) = \lbrace r_\theta + ps_{\theta \circ \varphi} - s_\theta \mid \theta|_k = \tau \rbrace$. On the other hand we have $\chi = T(N)$ which is equal to $\prod_\theta \omega_{\theta}^{r_\theta +ps_{\theta \circ \varphi} - s_\theta}$ by Proposition~\ref{rankonesapplyT}. Therefore take $\widetilde{r_{\theta}} = r_\theta + ps_{\theta \circ \varphi} - s_\theta$.
	\end{proof}
\section{Crystalline representations}

In this section we state the key results which relate $\operatorname{Mod}^{\operatorname{SD}}_k(\mathcal{O})$ with crystalline representations. We then give a proof of the theorem from the introduction 
\subsection{Crystalline representations and Breuil--Kisin modules}
As in \cite{Fon94} let $B_{\operatorname{dR}}$ denote Fontaine's ring of $p$-adic periods, and $B_{\operatorname{crys}} \subset B_{\operatorname{dR}}$ the ring of crystalline periods. As in \cite{Fon94b} a $p$-adic representation $V$ of $G_K$ is crystalline if
$$
D_{\operatorname{crys}}(V) := (V \otimes_{\mathbb{Q}_p} B_{\operatorname{crys}})^{G_K}
$$
has $K_0$-dimension equal to $\operatorname{dim}_{\mathbb{Q}_p} V$. The inclusion $B_{\operatorname{crys}} \otimes_{K_0} K \subset B_{\operatorname{dR}}$ induces an equality $D_{\operatorname{crys}}(V)_K := D_{\operatorname{crys}}(V) \otimes_{K_0} K = (V \otimes_{\mathbb{Q}_p} B_{\operatorname{dR}})^{G_K}$ which allows us to equip $D_{\operatorname{crys}}(V)_K$ with the filtration
$$
F^i D_{\operatorname{crys}}(V)_K := (V \otimes_{\mathbb{Q}_p} t^i B_{\operatorname{dR}}^+)^{G_K}
$$ 
Here $B_{\operatorname{dR}}^+ \subset B_{\operatorname{dR}}$ is the discrete valuation ring with field of fractions $B_{\operatorname{dR}}$, and $t$ is any choice of uniformiser.
\begin{theorem}[Kisin]\label{Kisinthm}
	There is a fully faithful functor $T \mapsto M(T)$ which sends a crystalline $\mathbb{Z}_p$-lattice onto an object of $\operatorname{Mod}^{\operatorname{BK}}_K$ which is free over $\mathfrak{S}$. The Breuil--Kisin module $M(T)$ is uniquely determined by the fact that $T(M(T)) = T|_{G_{K_\infty}}$. 
\end{theorem}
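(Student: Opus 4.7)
The plan is to follow Kisin's strategy of first constructing a Breuil--Kisin module rationally from the filtered $\varphi$-module, then descending to an integral $\mathfrak{S}$-lattice using the etale $\varphi$-module attached to $T|_{G_{K_\infty}}$, and finally verifying the identification of $G_{K_\infty}$-representations. Throughout the construction should be functorial in $T$.

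The first step is to work with $V = T[\tfrac{1}{p}]$ and its filtered $\varphi$-module $D = D_{\operatorname{crys}}(V)$. The key rational input is Kisin's equivalence between the category of weakly admissible filtered $\varphi$-modules over $K$ and a category of finite free $\mathfrak{S}[\tfrac{1}{p}]$-modules $\mathfrak{M}$ equipped with an isomorphism $\varphi^*\mathfrak{M}[\tfrac{1}{E}] \xrightarrow{\sim} \mathfrak{M}[\tfrac{1}{E}]$. Via this equivalence one attaches to $V$ a rational Breuil--Kisin module $\mathfrak{M}(V)$; the filtration on $D_{\operatorname{crys}}(V)_K$ is encoded by the cokernel of $\varphi$ after base change along $\mathfrak{S}[\tfrac{1}{p}] \to K$, $u \mapsto \pi$.

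The second step is to produce the integral $\mathfrak{S}$-lattice $M(T) \subset \mathfrak{M}(V)$ corresponding to $T$. For this, observe that $T|_{G_{K_\infty}}$ corresponds, under Proposition~\ref{font}, to an etale $\varphi$-module $M^{\operatorname{et}}(T)$ which is naturally a $\mathcal{O}_{\mathcal{E}}$-lattice inside $\mathfrak{M}(V) \otimes_{\mathfrak{S}} \mathcal{O}_{\mathcal{E}}[\tfrac{1}{p}]$. One then defines $M(T) = \mathfrak{M}(V) \cap M^{\operatorname{et}}(T)$ inside this ambient module, and shows that $M(T)$ is finite free over $\mathfrak{S}$ with $M(T)[\tfrac{1}{p}] = \mathfrak{M}(V)$ and $M(T) \otimes_{\mathfrak{S}} \mathcal{O}_{\mathcal{E}} = M^{\operatorname{et}}(T)$. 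The last identity, together with Proposition~\ref{font}, immediately gives $T(M(T)) = T|_{G_{K_\infty}}$. Uniqueness follows because an $\mathfrak{S}$-lattice inside a given etale $\varphi$-module whose $\varphi$ has cokernel killed by a power of $E$ is determined by its rational structure together with the $\mathcal{O}_{\mathcal{E}}$-lattice.

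Full faithfulness is then a separate assertion: given two crystalline lattices $T_1, T_2$, any morphism $M(T_1) \to M(T_2)$ recovers (after inverting $p$) a morphism of filtered $\varphi$-modules $D_{\operatorname{crys}}(V_1) \to D_{\operatorname{crys}}(V_2)$, hence of Galois representations $V_1 \to V_2$, and the integrality of this map at the level of $T_i$ follows by tracking the $\mathcal{O}_{\mathcal{E}}$-lattices and applying Proposition~\ref{font} together with Remark~\ref{kisinff}. The main obstacle is the rational step: the equivalence between weakly admissible filtered $\varphi$-modules and the relevant category of $\varphi$-modules over $\mathfrak{S}[\tfrac{1}{p}]$ is a deep theorem that rests on a careful study of vector bundles on the open unit disk together with the Colmez--Fontaine theorem (weakly admissible implies admissible). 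The integral descent and the verification of $G_{K_\infty}$-compatibility are comparatively formal once this equivalence is in hand.
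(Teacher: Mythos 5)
The paper does not give a proof of Theorem~\ref{Kisinthm}: the proof text consists entirely of a citation to the main result of \cite{Kis06}, with the statement taken from \cite[Theorem 4.4]{BMS}. So there is no in-paper argument to compare against; your proposal is instead a sketch of the proof in the cited references, and as such it captures the broad strategy correctly (rational theory via weakly admissible filtered $\varphi$-modules, then integral descent via the \'etale $\varphi$-module, then full faithfulness via the integral comparison).

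A few points in the sketch are imprecise in ways worth flagging. Kisin's rational equivalence is not with finite free $\mathfrak{S}[\tfrac{1}{p}]$-modules but with modules over the strictly larger ring $\mathcal{O}$ of rigid analytic functions on the open unit disk; the passage from an $\mathcal{O}$-module to an honest $\mathfrak{S}[\tfrac{1}{p}]$-module (let alone an $\mathfrak{S}$-lattice) is itself a nontrivial step in Kisin's argument and should not be folded into the statement of the equivalence. Your construction $M(T) = \mathfrak{M}(V) \cap M^{\operatorname{et}}(T)$ is in the right spirit, but the real content is the proof that this intersection is finite free over $\mathfrak{S}$ of the correct rank with $E$-height bounded by the Hodge--Tate weights, which requires the structure theory of $\varphi$-modules over $\mathcal{O}$ and is not formal. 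Finally, the appeal to Remark~\ref{kisinff} (Kisin's Proposition 2.1.12) for uniqueness and full faithfulness is reasonable, but one should be careful about circularity since that proposition is itself part of the same circle of ideas in \cite{Kis06}; the correct order is that the full faithfulness of $M \mapsto T(M)$ on free Breuil--Kisin modules is established first, and the uniqueness of $M(T)$ then follows. None of these caveats affect the paper, which simply cites the result.
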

\begin{proof}
	This is the main result of \cite{Kis06}. The formulation we give here is taken from \cite[Theorem 4.4]{BMS}.
\end{proof}
\begin{notation}
	A crystalline $\mathcal{O}$-lattice is a $G_K$-stable $\mathcal{O}$-lattice inside a continuous representation of $G_K$ on a finite dimensional $E$-vector space which is crystalline when viewed as a $\mathbb{Q}_p$-representation. By functoriality $M \mapsto T(M)$ restricts to a functor from the category of crystalline $\mathcal{O}$-lattices into $\operatorname{Mod}^{\operatorname{BK}}_K(\mathcal{O})$.
\end{notation}
\begin{definition}
	If $V$ is a crystalline representation on an $E$-vector space then $D_{\operatorname{crys}}(V)$ is a free module over $K_0 \otimes_{\mathbb{Q}_p} E$ of rank $\operatorname{dim}_E V$ and so $D_{\operatorname{crys}}(V)_K$ is a free $K_0 \otimes_{\mathbb{Q}_p} E$-module of rank $e\operatorname{dim}_E V$. If $K_0 \subset E$ then as in Construction~\ref{O-semilinear} there is a decomposition
	$$
	D_{\operatorname{crys}}(V)_K = \prod_{\tau\in \operatorname{Hom}_{\mathbb{F}_p}(k,\mathbb{F})} D_{\operatorname{crys}}(V)_{K,\tau}
	$$
	with each $D_{\operatorname{crys}}(V)_{K,\tau}$ a filtered $E$-vector space of dimension $e\operatorname{dim}_EV$. Define the $\tau$-th Hodge--Tate weights of $V$ to be the multiset $\operatorname{HT}_\tau(V)$ which contains $i$ with multiplicity 
	$$
	\operatorname{dim}_E \operatorname{gr}^i(D_{\operatorname{crys}}(V)_{K,\tau})
	$$
	With these normalisations the cyclotomic character has $\tau$-th Hodge--Tate weights $\lbrace -1,\ldots,-1 \rbrace$ ($e$ copies of $-1$).
\end{definition}

We need the following result of Gee--Liu--Savitt. 
\begin{theorem}[Gee--Liu--Savitt, Wang]\label{GLSthm2}
	Suppose $K = K_0$. If $p=2$ choose $\pi$ so that $K_\infty \cap K(\mu_{p^\infty}) = K$. If $T$ is a crystalline $\mathcal{O}$-lattice such that $\operatorname{HT}_\tau(V) \subset [0,p]$ where $V = T \otimes_\mathcal{O} E$, then $\overline{M} := M(T) \otimes_{\mathcal{O}} \mathbb{F} \in \operatorname{Mod}^{\operatorname{SD}}_k(\mathcal{O})$ and $\operatorname{Weight}_\tau(\overline{M}) = \operatorname{HT}_\tau(V)$.
\end{theorem}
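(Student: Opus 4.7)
The proof follows Gee--Liu--Savitt (and Wang at $p=2$), proceeding in three steps.

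First, I would establish the weight bound. By Kisin's integral theorem, the Breuil--Kisin module $M(T)$ attached to a crystalline $\mathcal{O}$-lattice with Hodge--Tate weights in $[0,p]$ has $E$-height at most $p$: concretely, $E(u)^p M(T) \subset M(T)^\varphi \subset M(T)$ inside $M(T)[\tfrac{1}{E}]$. Since $K = K_0$ we have $E(u) = u - p$, which reduces to $u$ modulo the uniformizer $\varpi$ of $\mathcal{O}$, giving $u^p \overline M \subset \overline M^\varphi \subset \overline M$. From the definition of the filtration $F^i \overline M^\varphi = \overline M^\varphi \cap u^i \overline M$, the right inclusion forces $F^0 \overline M^\varphi_k = \overline M^\varphi_k$ and the left forces $F^{p+1} \overline M^\varphi_k = 0$, so $\operatorname{Weight}(\overline M) \subset [0,p]$.

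Second, I would invoke Kisin's comparison between $M(T)$ and $D_{\operatorname{crys}}(V)$. The Hodge--Tate weights of $V$ are encoded in the integral filtration $F^i M(T)^\varphi := M(T)^\varphi \cap E(u)^i M(T)$, in the sense that after inverting $p$ and applying a suitable base change to $K$, this filtration recovers the Hodge filtration on $D_{\operatorname{dR}}(V)$. Decomposing over $\tau$, the multiset $\operatorname{HT}_\tau(V)$ agrees with the multiplicities of the graded pieces of $F^\bullet M(T)^\varphi_\tau/u$ after inverting $p$. The goal of the third step is to produce an adapted basis on the integral level: for each $\tau$ an $\mathcal{O}[[u]]$-basis $(f_{i,\tau})$ of $M(T)_\tau$ together with integers $r_{i,\tau} \in [0,p]$ with $\{r_{i,\tau}\} = \operatorname{HT}_\tau(V)$ such that $(u^{r_{i,\tau}} f_{i,\tau})$ is an $\mathcal{O}[[u]]$-basis of $M(T)^\varphi_\tau$. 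Reducing such a basis modulo $\varpi$ verifies condition (2) of Lemma~\ref{equivSDO}, which places $\overline M$ in $\operatorname{Mod}^{\operatorname{SD}}_k(\mathcal{O})$, and Remark~\ref{WtremarkO} then identifies $\operatorname{Weight}_\tau(\overline M) = \operatorname{HT}_\tau(V)$.

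The main obstacle is the integral lifting in the third step: the rational comparison of the second step does not directly produce an integral adapted basis, and one must show that the integral $E(u)$-filtration on $M(T)^\varphi$ splits, compatibly with $\varphi$, as a sum of free $\mathcal{O}[[u]]$-submodules with prescribed jumps. In \cite{GLS} this is achieved by passing to the associated Breuil module over Breuil's divided-power ring $S$, where strong divisibility is classically established, and then descending the resulting adapted basis back to $\mathfrak{S}$. At $p = 2$, the hypothesis $K_\infty \cap K(\mu_{p^\infty}) = K$ is imposed to handle the subtlety that $T \mapsto T|_{G_{K_\infty}}$ may fail to distinguish twists in bad cases; Wang \cite{Wang17} adapts the Gee--Liu--Savitt argument to this setting, which explains the hypothesis on $\pi$.
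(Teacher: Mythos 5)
Your proposal is correct and takes essentially the same approach as the paper, which simply cites GLS Theorem~4.22 (and Wang's Theorem~4.2 for $p=2$) and reduces modulo a uniformiser of $\mathcal{O}$. Your three steps --- the $E$-height bound giving $\operatorname{Weight}(\overline{M}) \subset [0,p]$, Kisin's rational comparison with the Hodge filtration, and the hard integral step (the adapted basis produced by GLS via Breuil's $S$-modules, whose reduction mod $\varpi$ verifies Lemma~\ref{equivSDO}(2) and, via Remark~\ref{WtremarkO}, identifies the weights) --- accurately summarise the content of the cited result and why it yields the statement.
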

\begin{proof}
	When $p>2$ this follows by reducing the description of $M(T)$ given in \cite[Theorem 4.22]{GLS} modulo any uniformiser of $\mathcal{O}$. The case $p=2$ follows similarly using \cite[Theorem 4.2]{Wang17} (note that the existence of a $\pi$ as stated is proven in \cite[Lemma 2.1]{Wang17}).\footnote{It is important when referencing both \cite{GLS} and \cite{Wang17} to keep track of differences in normalisation. In both these references $G_{K_\infty}$-representations are attached contravariantly to Breuil--Kisin modules and their Hodge--Tate weights are normalised to be the negative of ours.}  
\end{proof}
\subsection{Proof of main theorem} In this subsection we assume $K = K_0$. We can now give the proof of the theorem in the introduction. Recall that if $\overline{\rho}:G_K \rightarrow \operatorname{GL}_n(\overline{\mathbb{F}}_p)$ is a continuous representation then in Definition~\ref{defofWinert} we defined the set $\operatorname{Inert}(\overline{\rho})$.
\begin{theorem}\label{mainthm}
Let $K = K_0$. Let $\rho: G_K \rightarrow \operatorname{GL}_n(\overline{\mathbb{Z}}_p)$ be crystalline and suppose that $\operatorname{HT}_\tau(\rho) = (\lambda_{1,\tau} \leq \ldots \leq \lambda_{n,\tau})$ with $\lambda_{n,\tau} - \lambda_{1,\tau} \leq p$. Then
$$
(\lambda_{\tau}) \in \operatorname{Inert}(\overline{\rho})
$$ 
\end{theorem}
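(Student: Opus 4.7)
The plan is to reduce to the case $\operatorname{HT}_\tau(V) \subseteq [0,p]$, apply Theorem~\ref{GLSthm2} to obtain a strongly divisible Breuil--Kisin module $\overline{M}$ whose $\tau$-weights record $\lambda_\tau$, then read off the inertial structure of $\overline{\rho}$ from the simple subquotients of $\overline{M}$ using Proposition~\ref{main}.

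First I would enlarge $\mathcal{O}$ so that $\rho$ is $\mathcal{O}$-valued and every irreducible constituent of $\overline{\rho}^{\mathrm{ss}}$ is defined over $\mathbb{F}$. Since $K = K_0$ is unramified, for any tuple $(n_\tau)_\tau$ there is a crystalline character $\eta\colon G_K \to \mathcal{O}^\times$ with $\operatorname{HT}_\tau(\eta) = \{n_\tau\}$ (for example a product of Lubin--Tate characters); taking $n_\tau = -\lambda_{1,\tau}$ gives $\operatorname{HT}_\tau(\rho \otimes \eta) \subseteq [0,\lambda_{n,\tau}-\lambda_{1,\tau}] \subseteq [0,p]$. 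Because Definition~\ref{defofWinert} is formulated purely through the inertial characters $\omega_\theta$, twisting $\overline{\rho}$ by $\overline{\eta}$ shifts $\operatorname{Inert}$ in exactly the same way that twisting $\rho$ by $\eta$ shifts $\lambda_\tau$ (using that an unramified-$K$ crystalline character with $\tau$-weight $n_\tau$ has inertial character $\prod_\tau \omega_\tau^{-n_\tau}$). So we may assume $\operatorname{HT}_\tau(V) \subseteq [0,p]$.

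Now apply Theorem~\ref{GLSthm2} (choosing $\pi$ as in \cite{Wang17} if $p=2$) to produce $\overline{M} := M(\rho) \otimes_{\mathcal{O}} \mathbb{F} \in \operatorname{Mod}^{\operatorname{SD}}_k(\mathcal{O})$ with $\operatorname{Weight}_\tau(\overline{M}) = \lambda_\tau$; by Theorem~\ref{Kisinthm} and exactness of $T$, $T(\overline{M}) \cong \overline{\rho}|_{G_{K_\infty}}$. Pick any composition series $0 = \overline{M}_s \subset \cdots \subset \overline{M}_0 = \overline{M}$, which by Construction~\ref{compositionseries} is induced from a composition series of $\overline{\rho}|_{G_{K_\infty}}$. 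Proposition~\ref{SDsubquotO} ensures each simple factor $\overline{M}_i/\overline{M}_{i+1}$ lies in $\operatorname{Mod}^{\operatorname{SD}}_k(\mathcal{O})$ and that $\operatorname{Weight}_\tau(\overline{M}) = \bigsqcup_i \operatorname{Weight}_\tau(\overline{M}_i/\overline{M}_{i+1})$. Proposition~\ref{main} then provides, for each $i$, an unramified $L_i/K$ with residue field $l_i$ and integers $(\widetilde{r}_{\theta,i})_{\theta \in \operatorname{Hom}_{\mathbb{F}_p}(l_i,\overline{\mathbb{F}}_p)}$ such that
$$
T(\overline{M}_i/\overline{M}_{i+1}) \otimes_{\mathbb{F}} \overline{\mathbb{F}}_p \cong \psi_i \otimes \operatorname{Ind}_{L_{i,\infty}}^{K_\infty}\bigl(\textstyle\prod_\theta \omega_\theta^{-\widetilde{r}_{\theta,i}}\bigr),
$$
with $\operatorname{Weight}_\tau(\overline{M}_i/\overline{M}_{i+1}) = \{\widetilde{r}_{\theta,i} \mid \theta|_k = \tau\}$.

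To conclude, I would match these data with Definition~\ref{defofWinert}. Writing $\overline{\rho}^{\mathrm{ss}} = \bigoplus_\zeta \operatorname{Ind}_{L_\zeta}^K \zeta$ and using Corollary~\ref{restricttoGKinfty}, the simple $G_{K_\infty}$-factors of $\overline{\rho}|_{G_{K_\infty}}$ are precisely $\operatorname{Ind}_{L_{\zeta,\infty}}^{K_\infty} \zeta|_{G_{L_{\zeta,\infty}}}$. By Lemma~\ref{intyisom}, tame inertia for $K_\infty$ and for $K$ are canonically identified, so comparing the two descriptions of the simple factors and absorbing the unramified twist $\psi_i$ into $\zeta$ forces $L_i = L_\zeta$ and $\zeta|_{I^{\mathrm{t}}} = \prod_\theta \omega_\theta^{-\widetilde{r}_{\theta,i}}$. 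Hence $(\widetilde{r}_{\theta,i})_\zeta$ is an admissible choice of the integers $(r_{\theta,\zeta})$ in Definition~\ref{defofWinert}, and the associated weight is $\bigsqcup_i \{\widetilde{r}_{\theta,i} \mid \theta|_k = \tau\} = \operatorname{Weight}_\tau(\overline{M}) = \lambda_\tau$, placing $(\lambda_\tau) \in \operatorname{Inert}(\overline{\rho})$. The real work in this plan is not in the final section but in Proposition~\ref{main}, which supplies the crucial induced-character description of the simple factors; granted that and Proposition~\ref{SDsubquotO}, everything here is bookkeeping together with the twist-compatibility of $\operatorname{Inert}$.
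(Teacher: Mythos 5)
Your proposal is correct and follows essentially the same route as the paper: twist into the range $[0,p]$, invoke Theorem~\ref{GLSthm2} to land in $\operatorname{Mod}^{\operatorname{SD}}_k(\mathcal{O})$ with the right $\tau$-weights, pass to a composition series and use Proposition~\ref{SDsubquotO} to split the weights, then apply Proposition~\ref{main} to each simple factor and compare with Definition~\ref{defofWinert}. The one small difference is that the paper fixes a $G_K$-composition series of $\overline{\rho}\otimes\mathbb{F}$ and takes the corresponding series of $\overline{M}$, so that each $T(\overline{M}_i/\overline{M}_{i+1})$ is by construction the restriction of an irreducible $G_K$-module and therefore (Corollary~\ref{restricttoGKinfty}) induced from a character; you instead take an arbitrary composition series of $\overline{M}$ and rely on uniqueness of Jordan--H\"older factors over $G_{K_\infty}$ to identify the factors with the $\operatorname{Ind}_{L_{\zeta,\infty}}^{K_\infty}\zeta|_{G_{L_{\zeta,\infty}}}$, which is fine but should really be said as ``up to $\operatorname{Gal}(L/K)$-conjugate'' in the character identification (this does not change $\operatorname{Inert}$, as the paper notes it only depends on $\overline{\rho}^{\operatorname{ss}}|_{I^{\operatorname{t}}}$). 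You also make explicit the crystalline-character twist that the paper dismisses as a ``straightforward twisting argument,'' which is a helpful elaboration.
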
 
\begin{proof}
Choose a coefficient field $E$ so that $\rho$ is defined over $\mathcal{O}$. Via a straightforward twisting argument we may suppose $\operatorname{HT}_\tau(\rho) \in [0,p]$. Let $M(\rho) \in \operatorname{Mod}^{\operatorname{BK}}_K(\mathcal{O})$ be the associated Breuil--Kisin module. By Theorem~\ref{GLSthm2}, $\overline{M} = M(\rho) \otimes_{\mathcal{O}} \mathbb{F} \in \operatorname{Mod}^{\operatorname{SD}}_k(\mathcal{O})$ and $\operatorname{HT}_\tau(\rho) = \operatorname{Weight}_\tau(\overline{M})$.

Choose a $G_K$-composition series of $\rho \otimes_{\mathcal{O}} \mathbb{F}$. Enlarging $E$ if necessary we can suppose that Lemma~\ref{irred} holds for each Jordan--Holder factor. Let $0 = \overline{M}_n \subset \ldots \subset \overline{M}_0 = \overline{M}$ be the corresponding composition series of $\overline{M}$. By Proposition~\ref{SDsubquotO} each of $\overline{M}_i/\overline{M}_{i+1} \in \operatorname{Mod}^{\operatorname{SD}}_k(\mathcal{O})$ and $\operatorname{Weight}_{\tau}(\overline{M}) = \bigcup_i \operatorname{Weight}_\tau(M_i/M_{i+1})$. Lemma~\ref{restricttoGKinfty} implies $T(\overline{M}_i/\overline{M}_{i+1})$ is induced from a character $\chi_i \colon LK_\infty \rightarrow \mathbb{F}^\times$ for some unramified extension $L/K$ (depending on $i$). Therefore  Proposition~\ref{main} applies to $\overline{M}_i/\overline{M}_{i+1}$ and shows that the weights of $\overline{M}_i/\overline{M}_{i+1}$ are contained in $\operatorname{Inert}(\chi_i)$. Since this is true for each $i$ we deduce $(\lambda_\tau) \in \operatorname{Inert}(\overline{\rho})$.
\end{proof}

\begin{bibdiv}
	\begin{biblist}
		\bib{BLGG}{article}{
			author={Barnet-Lamb, Thomas},
			author={Gee, Toby},
			author={Geraghty, David},
			title={Serre weights for $U(n)$},
			journal={J. Reine Angew. Math.},
			volume={735},
			date={2018},
			pages={199--224},
		}
		
		\bib{BLGGT}{article}{
			author={Barnet-Lamb, Thomas},
			author={Gee, Toby},
			author={Geraghty, David},
			author={Taylor, Richard},
			title={Potential automorphy and change of weight},
			journal={Ann. of Math. (2)},
			volume={179},
			date={2014},
			number={2},
			pages={501--609},
		}
		\bib{Berger11}{article}{
			author={Berger, Laurent},
			title={La correspondance de Langlands locale $p$-adique pour ${\rm
					GL}_2({\bf Q}_p)$},
			language={French, with French summary},
			note={S\'{e}minaire Bourbaki. Vol. 2009/2010. Expos\'{e}s 1012--1026},
			journal={Ast\'{e}risque},
			number={339},
			date={2011},
			pages={Exp. No. 1017, viii, 157--180},
		}
		\bib{BMS}{article}{
			author = {Bhatt, Bhargav}
			author = {Morrow, Matthew}
			author = {Scholze, Peter},
			title = {Integral {$p$}-adic {H}odge theory},
			journal = {Preprint, arXiv:1602.03148},
			date = {2016},
		}
		\bib{BourbakiCA}{book}{
			author={Bourbaki, Nicolas},
			title={Commutative algebra. Chapters 1--7},
			series={Elements of Mathematics (Berlin)},
			note={Translated from the French;
				Reprint of the 1972 edition},
			publisher={Springer-Verlag, Berlin},
			date={1989},
			pages={xxiv+625},
		}
		
		\bib{CH13}{article}{
			author={Chenevier, Ga\"{e}tan},
			author={Harris, Michael},
			title={Construction of automorphic Galois representations, II},
			journal={Camb. J. Math.},
			volume={1},
			date={2013},
			number={1},
			pages={53--73},
		}
		\bib{Col98}{article}{
			author={Colmez, Pierre},
			title={Th\'{e}orie d'Iwasawa des repr\'{e}sentations de de Rham d'un corps local},
			language={French},
			journal={Ann. of Math. (2)},
			volume={148},
			date={1998},
			number={2},
			pages={485--571},
		}

		\bib{Fon94}{article}{
			author={Fontaine, Jean-Marc},
			title={Le corps des p\'{e}riodes $p$-adiques},
			language={French},
			note={With an appendix by Pierre Colmez;
				P\'{e}riodes $p$-adiques (Bures-sur-Yvette, 1988)},
			journal={Ast\'{e}risque},
			number={223},
			date={1994},
			pages={59--111},
		}
		
		\bib{Fon94b}{article}{
			author={Fontaine, Jean-Marc},
			title={Repr\'{e}sentations $p$-adiques semi-stables},
			language={French},
			note={With an appendix by Pierre Colmez;
				P\'{e}riodes $p$-adiques (Bures-sur-Yvette, 1988)},
			journal={Ast\'{e}risque},
			number={223},
			date={1994},
			pages={113--184},
		}
		
		\bib{Fon00}{article}{
			author={Fontaine, Jean-Marc},
			title={Repr\'{e}sentations $p$-adiques des corps locaux. I},
			language={French},
			conference={
				title={The Grothendieck Festschrift, Vol. II},
			},
			book={
				series={Progr. Math.},
				volume={87},
				publisher={Birkh\"{a}user Boston, Boston, MA},
			},
			date={1990},
			pages={249--309},
		}
		\bib{GLS}{article}{
			author={Gee, Toby},
			author={Liu, Tong},
			author={Savitt, David},
			title={The Buzzard-Diamond-Jarvis conjecture for unitary groups},
			journal={J. Amer. Math. Soc.},
			volume={27},
			date={2014},
			number={2},
			pages={389--435},
		}
		\bib{Kis06}{article}{
			author={Kisin, Mark},
			title={Crystalline representations and $F$-crystals},
			conference={
				title={Algebraic geometry and number theory},
			},
			book={
				series={Progr. Math.},
				volume={253},
				publisher={Birkh\"{a}user Boston, Boston, MA},
			},
			date={2006},
			pages={459--496},
		}
		\bib{Liu10b}{article}{
			author={Liu, Tong},
			title={The correspondence between Barsotti-Tate groups and Kisin modules
				when $p=2$},
			language={English, with English and French summaries},
			journal={J. Th\'{e}or. Nombres Bordeaux},
			volume={25},
			date={2013},
			number={3},
			pages={661--676},
		}
		\bib{Sch12}{article}{
			author={Scholze, Peter},
			title={Perfectoid spaces},
			journal={Publ. Math. Inst. Hautes \'{E}tudes Sci.},
			volume={116},
			date={2012},
			pages={245--313},
			issn={0073-8301},
		}
		\bib{Serre72}{article}{
			author={Serre, Jean-Pierre},
			title={Propri\'{e}t\'{e}s galoisiennes des points d'ordre fini des courbes
				elliptiques},
			language={French},
			journal={Invent. Math.},
			volume={15},
			date={1972},
			number={4},
			pages={259--331},
		}
		\bib{SerreLA}{book}{
			author={Serre, Jean-Pierre},
			title={Local algebra},
			series={Springer Monographs in Mathematics},
			note={Translated from the French by CheeWhye Chin and revised by the
				author},
			publisher={Springer-Verlag, Berlin},
			date={2000},
			pages={xiv+128},
		}
		\bib{Wang17}{article}{
			author = {Wang, Xiyuan},
			title = {Weight elimination in dimensions 2 when $p=2$},
			journal = {Preprint, arXiv:1711.09035},
			date = {2017},
		}
	\end{biblist}
\end{bibdiv}
	\end{document}